\newif\ifarxiv
\newcommand\footnoteref[1]{\protected@xdef\@thefnmark{\ref{#1}}\@footnotemark}
\definecolor{mydarkblue}{rgb}{0,0.08,0.45}
\definecolor{myblue}{HTML}{D2E4FC}
\newcommand*\mybluebox[1]{%
\colorbox{myblue}{\hspace{1em}#1\hspace{1em}}}
\DeclareMathOperator*{\argmin}{\mathbf{arg\,min}}
\DeclareMathOperator*{\dom}{dom}
\DeclareMathOperator*{\minimize}{minimize}
\DeclareMathOperator*{\Fix}{{Fix}}
\DeclareMathOperator{\prox}{\mathbf{prox}}
\newcommand{\cmark}{\ding{51}}%
\newcommand{\xmark}{\ding{55}}%
\def\RR{{\mathbb R}}
\def\xx{{\boldsymbol x}}
\def\XX{{\boldsymbol X}}
\def\ZZ{{\boldsymbol Z}}
\def\YY{{\boldsymbol Y}}
\def\uu{{\boldsymbol u}}
\def\UU{{\boldsymbol U}}
\def\rr{{\boldsymbol r}}
\def\yy{{\boldsymbol y}}
\def\zz{{\boldsymbol z}}
\def\KK{{\boldsymbol K}}
\def\CC{{\boldsymbol C}}
\def\TT{{\boldsymbol T}}
\def\balpha{{\boldsymbol u}}
\def\defas{\stackrel{\text{def}}{=}}
\newtheorem{theorem}{Theorem}
\newtheorem{lemma}{Lemma}
\newtheorem{corollary}{Corollary}
\newtheorem{definition}{Definition}
\definecolor{mydarkblue}{rgb}{0,0.08,0.45}
\newenvironment{customtheorem}[1]
  {\innercustomtheorem}
  {\endinnercustomtheorem}
\newenvironment{customcorollary}[1]
  {\innercustomcorollary}
  {\endinnercustomcorollary}
\definecolor{Gray}{gray}{0.92}
\definecolor{mydarkred}{RGB}{192,47,25}
\newcommand{\red}{\color{mydarkred}}
\definecolor{mydarkgreen}{RGB}{39,130,67}
\newcommand{\green}{\color{mydarkgreen}}
\begin{document}
\twocolumn[
\icmltitle{Adaptive Three Operator Splitting}




\begin{icmlauthorlist}
\icmlauthor{Fabian Pedregosa}{ber,eth}
\icmlauthor{Gauthier Gidel}{um}
\end{icmlauthorlist}

\icmlaffiliation{ber}{University of California at Berkeley, USA}
\icmlaffiliation{eth}{Department of Computer Science, ETH Zurich, Switzerland}
\icmlaffiliation{um}{Mila - Universit\'e de Montr\'eal, Canada}

\icmlcorrespondingauthor{Fabian Pedregosa}{\href{mailto:f@bianp.net}{f@bianp.net}}

\icmlkeywords{optimization, proximal splitting, line search, three operator splitting}

\vskip 0.3in

]

\printAffiliationsAndNotice{}  
\begin{abstract}
We propose and analyze an adaptive step-size variant of the Davis-Yin three operator splitting. This method can solve optimization problems composed by a sum of a smooth term for which we have access to its gradient and an arbitrary number of potentially non-smooth terms for which we have access to their proximal operator.
The proposed method sets the step-size based on local information of the objective --hence allowing for larger step-sizes--, only requires two extra function evaluations per iteration and does not depend on any step-size hyperparameter besides an initial estimate.
We provide an iteration complexity analysis that matches the best known results for the non-adaptive variant: sublinear convergence for general convex functions and linear convergence under strong convexity of the smooth term and smoothness of one of the proximal terms.
Finally, an empirical comparison with related methods on 6 different problems illustrates the computational advantage of the proposed method.
\end{abstract}

\section{Introduction}

Minimizing the sum of a smooth and a non-smooth term is at the core of many optimization problems that arise in machine learning and signal processing~\citep{rudin1992nonlinear,candes2006robust, chambolle2016introduction}.
In a few but important cases, such as $\ell_1$ or group lasso regularization, the \mbox{non-smooth} term is simple enough so that its proximal operator is available in closed form or at least fast to compute. In this case, highly scalable methods such as proximal
gradient descent~\citep{beck2009gradient,nesterov2013gradient} or proximal coordinate descent~\citep{richtarik2014iteration} have shown state of the art performance.
However,
 the desire to model increasingly complex phenomena has led to the development of a flurry of penalties with costly to compute proximal operator. Examples are the overlapping group lasso~\citep{jacob2009group}, multidimensional total variation~\citep{barbero2014modular} or trend filtering~\citep{kim2009ell1}, to name a few.

A key observation is that, despite the difficulty in computing its proximal operator, many of these penalties can be decomposed as a sum of terms for which we have access to their proximal operator.
Proximal splitting methods like the three operator splitting~\citep{davis2017three} offer a principled way to incorporate these penalties into the optimizer. In this work we will describe a method to solve optimization problems of the form
\begin{empheq}[box=\mybluebox]{equation}\tag{OPT}\label{eq:obj_fun}
  \minimize_{\xx \in \RR^p}\,\vphantom{\sum_i^n} f(\xx) + g(\xx) + h(\xx) ~,
\end{empheq}
where $f$ is convex and $L_f$-smooth (i.e., differentiable with $L_f$-Lipschitz gradient) and $g, h$ are both convex but potentially non-smooth. We further assume $g$ and $h$ are \emph{proximal}, i.e., we have access to the proximal operator.

This formulation allows to express a broad range of problems arising in machine learning and signal processing: the smooth term includes the least squares or logistic loss functions; the two proximal terms can be extended to an arbitrary number via a product space formulation and as we will see in \S\ref{scs:learning_multiple_penalties} include many important penalties such as the group lasso with overlap, total variation, $\ell_1$ trend filtering, etc. Furthermore, the penalties can be extended-valued, thus allowing an intersection for convex constraints through the use of the indicator function.

{\bfseries The three operator splitting} (TOS) method \citep{davis2017three} is a recently proposed method for problems of the form \eqref{eq:obj_fun}.
%
At each iteration, it only requires to evaluate once the gradient of $f$ and the proximal operator of $g$ and $h$. It also relies on one step-size parameter, and while it can be set based on the Lipschitz constant of the gradient of $f$, this is not entirely satisfactory for two reasons. First, this constant is often costly to compute. Second, this constant is a global upper bound on the Lipschitz constant, while locally the Lipschitz constant might be smaller, allowing for larger step-sizes.

\emph{Adaptive step-size} methods, also known as inexact and backtracking line search, instead choose the step-size by verifying a sufficient decrease condition at each iteration. 
 This allows to take larger step-sizes and has proven to be an important ingredient in the practical implementation of first and second-order methods~\citep{nocedal2006numerical}.

\paragraph{Outline and main contributions.} Our main contribution is the development and analysis of an adaptive variant of the TOS algorithm.
The proposed algorithm does not depend on any step-size hyperparameter (besides an initial estimate) and enjoys similar convergence guarantees as the non adaptive variant.
The paper is organized as follows:
\begin{itemize}[leftmargin=*]
\item \emph{Methods}. \S\ref{scs:methods} describes the proposed algorithm, extended in \S\ref{scs:extension_k_terms} to an arbitrary number of proximal terms.
\item \emph{Analysis}. \S\ref{scs:analysis} provides a convergence analysis based on an interpretation of the algorithm as a saddle-point optimization method. This significantly departs from the analysis of~\citet{davis2017three} for the non adaptive variant and results in improved and more general rates.
\item \emph{Applications}.
\S\ref{scs:applications} discusses the application to different penalties and presents an empirical comparison on 6 different problems and 5 different penalties.
\end{itemize}

\paragraph{Notation.} We denote vectors with boldface lower case letters (i.e., $\xx$), and matrices and vector-valued functions in boldface upper case (i.e., $\XX$, $\TT(\cdot)$). $\|\cdot\|$ denotes the euclidean vector norm.
Given a matrix $\XX \in \RR^{n \times p}$, we denote by $\overline{\XX}$ the average along rows, that is, $\overline{\XX} = \nicefrac{1}{n}\sum_{i=1}^n \XX_i$.
We make extensive use of the proximal operator, defined for a convex function $\varphi$ and $\gamma > 0$ as
\begin{equation}
\prox_{\gamma \varphi}(\xx) \defas \argmin_{\zz \in \RR^p}\Big\{ \varphi(\zz) + \mfrac{1}{2 \gamma}\|\xx - \zz\|^2\Big\}~.
\end{equation}
The domain of a function $f:\RR^p \to ]-\infty, \infty]$ is $\dom f \defas \{\xx \in \RR^p\,|\,f(\xx) < \infty \}$. The indicator function is denoted $\imath\{\text{condition}\}$, which is $0$ if condition is verified and $+\infty$ otherwise.
Basic properties and definitions of convex functions are provided for convenience in \ref{apx:basic_definitions}.

\subsection{Related work}

Proximal splitting methods that can solve problems involving a sum of terms by accessing the proximal operators of their constituents can be traced back to the 1970s in the works of \citet{glowinski1975approximation,gabay1976dual,lions1979splitting}. There has been a surge in interest in these methods in the last years due to their applicability in machine learning~\citep{parikh2013proximal}, signal processing~\citep{combettes2011proximal} and parallel optimization~\citep{boyd2011distributed}.

Algorithms to solve problems of the form \eqref{eq:obj_fun} with two or more proximal terms and a smooth term accessed via its gradient have recently been proposed. Examples are the generalized forward-backward splitting~\citep{raguet2013generalized}, the three operator splitting (TOS)~\citep{davis2017three}, the
 primal-dual hybrid gradient (PDHG) method, proposed in \citep{condat2013primal,vu2013splitting} and analyzed by~\citet{chambolle2016ergodic} and the very recent primal-dual three operator splitting \citep{yan2018new}. We note that the last two methods can optimize a more general objective function in which $h(\xx)$ is replaced with $h(\boldsymbol{K}\xx)$ for an arbitrary matrix $\boldsymbol{K}$.
The original formulation of these methods requires to set the step-size based on criteria such as the Lipschitz constant of the gradient of the smooth term,  but variants with adaptive step-size have recently emerged.

An adaptive step-size variant of the PDHG algorithm has recently been proposed by \citet[\S5]{malitsky2016first}.
Compared to the proposed method, it requires one less function evaluation per iteration but since the original algorithm has two step-sizes, it still relies on one step-size hyperparameter. Convergence rates are not derived.

A different adaptive step-size strategy was proposed by \citet{giselsson2016line} as a general scheme for averaged operators. TOS is averaged for step-sizes $ < 2/L_f$, and we denote the combination of both methods LSAO-TOS. An $\mathcal{O}(1/\sqrt{t})$ convergence rate in terms of the operator residual norm is derived. Unfortunately, this quantity is difficult to relate to the more common objective function suboptimality used in the other contributions.

Another adaptive step-size variant of TOS was proposed without proof in the technical report \citet[Algorithm 3]{davis2015three}. It uses the same sufficient decrease inequality as our method, although the iterates are defined differently. We found the algorithm sometimes non-convergent and did not consider it further.

In contrast, we provide a convergence analysis for our method that achieves a $\mathcal{O}(1/t)$ convergence rate for the ergodic (i.e., averaged) iterate, and linear convergence under stronger assumptions, matching and in some cases even improving the best known rates of the non adaptive variant.

{
\centering
\footnotesize
\hspace*{-0.4cm}
\setlength\tabcolsep{5pt}\begin{tabular}{c c | c c c|}
\cline{2-5}
\multicolumn{1}{c|}{} & {Method} &
{Adaptive\!} & {Sublinear rate\!\!} & Linear rate  \\
\cline{2-5}

\multicolumn{1}{c|}{} & {\cellcolor{Gray} Adaptive TOS  } & \cellcolor{Gray} & \cellcolor{Gray}  &
\cellcolor{Gray} \\
\multicolumn{1}{c|}{\multirow{4}{*}}&\footnotesize(\emph{this work})
\cellcolor{Gray}&
\multirow{-2}{*}{\green\large\cmark} \cellcolor{Gray}
\cellcolor{Gray}&
\cellcolor{Gray} \multirow{-2}{*}{\green\large\cmark} &
\cellcolor{Gray} \multirow{-2}{*}{\green\large\cmark}   \\

\multicolumn{1}{c|}{}&{ TOS  } &
 & &
\\
\multicolumn{1}{c|}{\multirow{4}{*}}&
\multirow{-1}{*}{\scriptsize\citep{davis2017three}}&
\multirow{-2}{*}{\red\large\xmark}
&
 \multirow{-2}{*}{\green\large\cmark} &  \multirow{-2}{*}{\green\large\cmark} \\

\multicolumn{1}{c|}{\multirow{4}{*}}&\multirow{1}{*}{\cellcolor{Gray} LSAO-TOS  } &
\cellcolor{Gray} &\cellcolor{Gray}&\cellcolor{Gray} \\
\multicolumn{1}{c|}{\multirow{4}{*}}&
\multirow{-1}{*}{\scriptsize\citep{giselsson2016line}}\cellcolor{Gray}&
\multirow{-2}{*}{\green\large\cmark} \cellcolor{Gray}
\cellcolor{Gray}&
\cellcolor{Gray} \multirow{-2}{*}{ ~\green{\large\cmark}\footnote{\label{foot:conv_residuals}Convergence rate in terms of  operator residuals. }}&
\cellcolor{Gray} \multirow{-2}{*}{ \red{\large\xmark}  }\\

\multicolumn{1}{c|}{} &\multirow{-1}{*}{{PDHG}}  &\multirow{2}{*}{\red\large\xmark} & \multirow{2}{*}{ \green{\large\cmark}} & \multirow{2}{*}{\red{\large\xmark}}
  \\
\multicolumn{1}{c|}{\multirow{4}{*}}& \multirow{-1}{*}{\scriptsize\citep{condat2013primal, vu2013splitting}} &  &  & \\

\multicolumn{1}{c|}{}&{\cellcolor{Gray} PDHG-LS  } &
\cellcolor{Gray} &\cellcolor{Gray}&\cellcolor{Gray} \\
\multicolumn{1}{c|}{\multirow{4}{*}}&
\multirow{-1}{*}{\scriptsize\citep{malitsky2016first}}\cellcolor{Gray}&
\multirow{-2}{*}{\green\large\cmark} \cellcolor{Gray}
\cellcolor{Gray}&
\cellcolor{Gray} \multirow{-2}{*}{\red\large\xmark} &
\cellcolor{Gray} \multirow{-2}{*}{\red\large\xmark}  \\
\cline{2-5}\vspace{-2.8ex}\\
\end{tabular}
}

\section{Methods}\label{scs:methods}

In this section we present our main contribution, a three operator splitting method with adaptive step-size. The method is detailed in Algorithm~\ref{alg:algo_three_ls} and
 requires at each iteration to evaluate once the gradient of $f$ and the proximal operators of $g$ and $h$, and perform two function evaluations of $f$.
At iteration $t$ the candidate step-size $\gamma_t$ is chosen as to verify the following \emph{sufficient decrease} condition between the iterates $\zz_{t}$ and $\xx_{t+1}$ (Line~\ref{line:ls}):
\begin{gather*}
f(\xx_{t+1}) \leq Q_t(\xx_{t+1}, \gamma_t)\,,\,\text{ with \,$Q_t$ defined as}\\
Q_t(\xx, \gamma)\!\defas\!f(\zz_t) + \langle \nabla f(\zz_t), \xx\! -\! \zz_t \rangle\! +\! \frac{1}{2\gamma} \|\xx\!-\!\zz_t\|^2.
\end{gather*}
\begin{minipage}[t]{.48\linewidth}
This inequality can be interpreted as a quadratic upper bound condition on  $f$ at $\xx_{t+1}$: the right-hand side is a quadratic $Q_{t}$ which is tangent to $f$ at $\zz_t$ with amplitude $(2 \gamma_t)^{-1}$, and both sides are evaluated at $\xx_{t+1}$, defined in Line~\ref{line:x_update}. The under- \unskip\parfillskip 0pt \par
\end{minipage}
\begin{minipage}[t]{.3\linewidth}\vspace{-0.5em}
\hspace*{1em}{\begin{tikzpicture}[domain=-0.1:3, baseline=(current bounding box.north)]
  \draw[->] (-0.2,0) -- (2.8,0) node[below] {$\xx$};
  \draw[->] (0,-0.2) -- (0,2.9) node[above] {};
  \node[circle,fill=black,inner sep=0pt,minimum size=3pt,label=below:{$\zz_t$}] (a) at (0.5,0) {};
  \node[circle,fill=black,inner sep=0pt,minimum size=3pt,label=below:{$\xx_{t+1}$}] (a) at (1.7,0) {};
  \node[circle,draw=black, fill=white, inner sep=0pt,minimum size=5pt] (b) at (0.5 ,1.68) {};
  \draw[gray, dashed] (0.5,0)--(0.5,1.68);
  \draw[gray, dashed] (1.7,0)--(1.7,0.68);
  \draw[color=black!70,densely dashed, domain=0.0:2.5, line width=0.4mm] plot ({\x}, {1.7 - 1.48 * (\x - 0.5) + 0.8 * (\x-0.5) * (\x-0.5)}) node[above, align=left] {$Q_{t}(\xx)$};;
  \draw[color=black, line width=0.5mm] plot (\x, { 3 * exp(-\x-0.2) +0.2})
    node[above] {\vspace{0em}$f(\xx)$};
\end{tikzpicture}}
\end{minipage}

\vspace{-0.4em} lying principle of choosing the step-size based on the minimization of a quadratic upper bound has already been successful for the proximal-gradient method, where it is also referred to as backtracking~\citep{beck2009gradient} or full relaxation~\citep{nesterov2013gradient}. 
In fact, the proposed method coincides with the aforementioned when one of the proximal terms is constant.

\vspace{0.5em}By the properties of $L_f$-smooth functions, the sufficient decrease condition is verified for any $\gamma_t \leq 1/L_f$. Hence the
step-size search loop always has a finite terminationand the step-size is lower bounded by $\gamma_t \geq \min\{\tau / L_f, \gamma_0\}$. 
The practical advantage of this strategy is that it allows to consider a step-size potentially larger than $1/L_f$ and verify whether the above is verified at each iteration. If it is, then the algorithm uses the current step-size, and if not, it decreases the step-size by a factor which we denote $\tau$.

\paragraph{Growing step-size strategies.} 
We consider two different strategies to initialize next iterate step-size. The first strategy (\emph{Variant 1}) is the simplest and consists in initializing the next step-size with the current one (Line~\ref{line:update_gamma_v1}). In this variant, the step-size is only allowed to decrease.

The second strategy (\emph{Variant 2}) allows the step-size to increase but in exchange requires the proximal term $h$ to be Lipschitz continuous (note, not smooth as $f$ but only Lipschitz).
This is the case of most penalties (i.e., $\ell_1$, group lasso, total variation, etc.) but not of indicator functions and so
is less general than the first variant. As we will see in the applications section, the ability to grow the step-size has an important effect on its empirical performance.

\setlength{\textfloatsep}{20pt}
\begin{algorithm}[t]
 \KwIn{$\zz_0 \in \RR^p$, $\uu_0 \in \RR^p$, $\gamma_0 > 0, \tau \in (0, 1)$}

\For{$t=0, 1, 2, \ldots$ }{

\Repeat(\hfill $\triangleright$ step-size search loop){

$\xx_{t+1} = \prox_{\gamma_{t} g}(\zz_{t} - \gamma_t\uu_{t} - \gamma_t\nabla f(\zz_{t}))$\label{line:x_update}


\eIf( ){$f(\xx_{t+1}) \leq Q_t(\xx_{t+1}, \gamma_t)~$\label{line:ls}}{

{\bfseries break}\hfill$\triangleright$ sufficient decrease verified
}{
$\gamma_t = \tau \gamma_t$\hfill $\triangleright$ decrease step-size\label{alg:decrease_step_size}
}
}{
}
$\zz_{t+1} = \prox_{\gamma_t h}(\xx_{t+1} + \gamma_t \uu_{t})$\label{line:z_update}

$\uu_{t+1} = \uu_{t} + (\xx_{t+1} - \zz_{t+1}) / \gamma_t$\label{line:u_update}

\hfill$\triangleright$ choose step-size for next iteration, two variants

\Variantone{

$\gamma_{t+1} = \gamma_{t}$\label{line:update_gamma_v1}
}

\Varianttwo(\hfill $\triangleright$ only if $h$ is $\beta_h$-Lipschitz){

$\delta_t = Q_t(\xx_{t+1}, \gamma_t) - f(\xx_{t+1})$ \label{line:define_delta}

Choose any $\gamma_{t+1} \in [\gamma_t, \sqrt{\gamma_{t}^2 +  \gamma_t \delta_t (2\beta_h)^{-2}}]$\label{line:increase_ls}
}

}
\Return{$\xx_{t+1}$, $\uu_{t+1}$}
 \caption{Adaptive Three Operator Splitting}\label{alg:algo_three_ls}
\end{algorithm}

\paragraph{Initial and default values.} The proposed method takes as input 4 parameters, which we briefly discuss, together with a growing step-size heuristic for Variant 2:
\begin{itemize}[leftmargin=*]
\item \emph{Initial guess $\zz_0$ and $\uu_0$}. $\zz_0$ is an initial guess of the primal problem \eqref{eq:obj_fun}, while $\uu_0$ is an initial guess for a minimizer of a (yet to be defined) dual function \eqref{eq:dual_loss}. In practice, we initialize both variables to zero.
\item \emph{Initial step-size $\gamma_0$}. To estimate a starting value for the step-size, we start with $\varepsilon=10^{-3}$, $\widetilde\zz = \zz_0 - \varepsilon \nabla f(\zz_0)$ and divide $\varepsilon$ by 10 until $f(\widetilde\zz) \leq f(\zz_0)$. Then we solve $f(\widetilde\zz) = Q_0(\widetilde\zz)$  for $\gamma_0$ and double that estimate, giving
\begin{equation}
  \gamma_0 = 4 (f(\zz_0) - f(\widetilde\zz)) \|\nabla f(\zz_0)\|^{-2}~.
\end{equation}
\item The \emph{line search decrease parameter $\tau$} regulates the factor by which the step-size is decreased each time the line search condition is unsuccessful.
This is a parameter that is common to all line search methods and can be set to any value $\tau \in (0, 1)$. Following~\citep{malitsky2016first} we set it to $\tau =0.7$.
\item \emph{step-size growth.} Variant 2 allows the step-size to grow by an amount that depends on $\beta_h^{-2}$. This quantity can be arbitrarily large (e.g., vanishing regularization), and so choosing the largest admissible step-size might result in too many decrease corrections. This can be avoided e.g. by limiting its growth to double every $20$ iterations. Line \ref{line:increase_ls} then becomes:
\begin{equation}
  \gamma_{t+1} = \min\{\gamma_t 2^{0.05}, \sqrt{\gamma_{t}^2 +  \gamma_t \delta_t (2\beta_h)^{-2}}\}~.
\end{equation}
\end{itemize}

%

Upon termination, the algorithm returns two vectors. The first vector is an approximate solution to \eqref{eq:obj_fun}, while the second vector is an approximate minimizer of a dual objective which we will detail in \S\ref{scs:analysis}.

\paragraph{Special cases and related methods.} We mention two notable special cases of this algorithm. First, for any step-size $\gamma_t \leq 1/L_f   $, the line search condition will always succeed by the properties of $L_f$-smooth functions and so the step-size in Variant 1 is constant. Defining $\yy_t = \xx_t + \gamma_t \uu_{t-1}$, it is easy to verify that Algorithm~\ref{alg:algo_three_ls} (Variant 1) can be written with a constant step-size $\gamma = \gamma_t$ as an iteration of the form
\begin{equation}\begin{aligned}
  &\zz_t = \prox_{\gamma h}(\yy_t)\\
  &\xx_{t+1}\!=\!\prox_{\gamma g}(2 \zz_t - \yy_t - \gamma \nabla f(\zz_t))\\
  &\yy_{t+1} = \yy_t - \zz_t + \xx_{t+1}~,
  \end{aligned}\end{equation}
which is the standard (non-overrelaxed) form of the three operator splitting~\citep[Algorithm 1]{davis2017three}. 
The adaptive variant requires two extra function evaluations $f(\zz_{t})$ and $f(\xx_{t+1})$ for the line search condition in Line~\ref{line:ls}, but as we will see in the experimental section, most often the ability to take larger step outweighs this extra cost.

Second, for $h=0$, we have from lines \ref{line:z_update} and \ref{line:u_update} that $\uu_t = 0$ and in this case (ignoring growing step-size strategies), this algorithm simplifies to the proximal gradient descent with line search of \citep{beck2009gradient}.

Algorithm~\ref{alg:algo_three_ls} can be written equivalently in a way that highlights similarities and differences with the PDHG method. Using Moreau's decomposition $\prox_{\gamma h}(\xx) = \xx - \gamma \prox_{\gamma h^*}(\xx/\gamma)$ 
 yields the following recurrence
\begin{align}
\uu_{t+1} &= \prox_{h^*/\gamma}(\uu_{t} +\xx_{t}/\gamma)~,\\
\xx_{t+2} &= \prox_{\gamma g}(\xx_{t+1}\!- \gamma (\nabla f(\zz_{t+1})+ 2 \uu_{t+1}\!- \uu_{t}))\,.\nonumber
\end{align}
This form is almost identical to Algorithm 3.2 in~\citep{condat2013primal}, but with a different step-size and the gradient evaluated at the extrapolated $\zz_{t+1} = \xx_{t+1} - \gamma (\uu_{t+1} - \uu_{t})$ instead of the previous iterate $\xx_{t+1}$ in PDHG.

\subsection{Extension to multiple proximal terms}\label{scs:extension_k_terms}

We now consider the problem of minimizing an objective of the form:
\begin{empheq}[box=\mybluebox]{equation}\tag{OPT-$k$}\label{eq:obj_fun_k}
  \minimize_{\xx \in \RR^p}\,\vphantom{\sum_i^n} \varphi(\xx) + \textstyle\sum_{j=1}^k h_j(\xx) ~,
\end{empheq}
where $\varphi$ is $L_\varphi$-smooth and each $h_j$ is proximal.
The adaptive three operator splitting can be used to solve problems of this form by reducing them to a problem of the form~\eqref{eq:obj_fun} in an enlarged space.
Consider consider the following problem in $\RR^{k\times p}$,
\begin{equation*}
\minimize_{\XX \in \RR^{k\times p}}\underbrace{\varphi(\overline\XX)}_{= f(\XX)} + \underbrace{\textstyle\sum_{j=1}^k h_j(\XX_j)}_{= h(\XX)} + \underbrace{\imath\{\XX_1\!=\!\cdots\!=\!\XX_k\}}_{=g(\XX)}.
\end{equation*}
It is easy to see that this problem
shares the same set of solutions as \eqref{eq:obj_fun_k} with the correspondence $\xx = \overline{\XX}$, as the last term forces all the $\XX_i$ terms to be equal.
In this formulation the first term is smooth, the second term is proximal (variables in $h_i$ are separated) and the proximal operator of the last term is given by $\overline{\XX}\,\mathbf{1}^T$. Hence Algorithm~\ref{alg:algo_three_ls} can be applied to this problem.
Deriving the complete algorithm is now merely a matter of replacing $f, g, h$ by its appropriate values in Algorithm~\ref{alg:algo_three_ls} and is specified in \ref{apx:k_operator_splitting}.
The resulting adaptive algorithm
seems to be new also in this extended formulation.

It is also possible to swap the definitions of $g$ and $h$, which results in a different algorithm that can be seen as an adaptive variant of the
 Generalized Forward-Backward splitting of \citet{raguet2013generalized}. However, this formulation is less convenient for our purpose, since in this case the $h$ term is always an indicator function and so it would not be possible to apply variant 2 of our algorithm.

\section{Analysis}\label{scs:analysis}

In this section we provide a convergence rate analysis of the proposed method. We start by a characterization the set of fixed points of the algorithm, followed by a discussion on the gap function used to measure suboptimality. Finally, we present convergence rates for two different function classes.
All proofs can be found in \ref{apx:analysis}.

{\bfseries Assumption 1: Regularity.} We assume that $f$ is convex and $L_f$-smooth in $\RR^p$ and that $g$ and $h$ are proper (i.e., have nonempty domain), lower semicontinuous (i.e., its sublevel sets are closed) convex functions. We note that lower semicontinuity is a weak form of continuity that allows extended-valued functions (such as the indicator function) over a closed domain.

{\bfseries Assumption 2: Qualification conditions.} We assume the relative interior of $\dom g$ and $\dom h$ have a non-empty intersection. This is a weak and standard assumption that, together with the regularity assumption, guarantees strong (also known as total) duality~\citep[Prop. 5.3.8]{bertsekas2015convex}.

Using the definition of Fenchel conjugate, we can can reformulate \eqref{eq:obj_fun} as a saddle-point problem:
\begin{align}
    &\min_{\xx \in \RR^d} f(\xx) + g(\xx) + h(\xx) \\
    &\quad = \min_{\xx \in \RR^d}  f(\xx) + g(\xx) + \max_{\uu \in \RR^d}\left\{\langle \xx, \uu \rangle - h^*(\uu)\right\}\\
    &\quad= \min_{\xx \in \RR^d} \max_{\uu \in \RR^d} \underbrace{f(\xx) + g(\xx) + \langle \xx, \uu \rangle - h^*(\uu)}_{\defas \mathcal{L}(\xx, \uu)}~.
\end{align}
We recall that a saddle point of $\mathcal{L}$ is a pair $(\xx^\star, \uu^\star)$ such that the following is verified for any $(\xx, \balpha)$ in the domain~\citep[\S4.1]{hiriart2013convex}:
\begin{equation}\label{eq:saddle_point}
   \mathcal{L}(\xx^\star\!, \balpha) \leq \mathcal{L}(\xx, \uu^\star) ~.
\end{equation}
A consequence of strong duality is the equivalence between the saddle points of $\mathcal{L}$ and the minimizers of the primal and dual objectives. Let $P$ and $D$ denote these primal and dual objectives:
\begin{align}
    P(\xx) &\defas f(\xx) + g(\xx) + h(\xx)\label{eq:primal_loss}\\
    D(\uu) &\defas (f+g)^*(-\uu) + h^*(\uu).\label{eq:dual_loss}
\end{align}
Then if $(\xx^\star, \uu^\star$) is a saddle point of $\mathcal{L}$, $\xx^\star$ is a minimizer of $P$ and $\uu^\star$ is a minimizer of $D$. Likewise, a pair of minimizers of $P$ and $D$ form a saddle point of $\mathcal{L}$.

\subsection{Fixed point characterization}\label{scs:disguise}

A common first step in the analysis of optimization methods is the study of its set of fixed or stationary points. While this does not necessarily imply convergence, knowing which elements will be left invariant by the method improves our understanding and is a stepping stone for further analysis.
We will show that the set of fixed points of the algorithm has a particularly simple and elegant structure: the Cartesian product of primal and dual solutions.

For the purpose of analysis it will be useful to express Algorithm~\ref{alg:algo_three_ls} as an iteration of the form, $(\zz_{t+1}, \uu_{t+1}) = \boldsymbol{T}_{\gamma_t}(\zz_{t}, \uu_{t})$, where the operator $\boldsymbol{T}_{\gamma}$ is defined as
\begin{align}
\boldsymbol{T}_{\gamma}&(\zz,\uu) \defas (\zz^+,\uu^+), \text{ with }\\
&\begin{cases}
\zz^+ = \prox_{\gamma h}(\xx(\zz, \uu) + \gamma \uu)\\
\uu^+ = \uu + (\xx(\zz, \uu) - \zz^+) / \gamma\\
\xx(\zz, \uu) = \prox_{\gamma g}(\zz - \gamma(\uu + \nabla f(\zz)))~.
\end{cases}\nonumber
\end{align}

The following theorem characterizes the set of fixed points of this operator, denoted $\Fix(\boldsymbol{T}_{\gamma})$.
\begin{theorem}\label{thm:fixed_point}
Let $\mathcal{P}^\star$ denote the set of minimizers of the primal objective $P$ and $\mathcal{D}^\star$ the set of minimizers of the dual objective $D$\,. Then the fixed points of $\TT_\gamma$ are given by
\begin{equation}\label{eq:fix_decomposition}
\Fix(\boldsymbol{T}_{\gamma}) = \mathcal{P}^\star \times \mathcal{D}^\star~.
\end{equation}
\end{theorem}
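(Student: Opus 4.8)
The plan is to unwind the two fixed-point equations $\zz^+ = \zz$ and $\uu^+ = \uu$ using the standard subdifferential characterization of the proximal operator, namely $\prox_{\gamma \varphi}(\yy) = \zz \iff (\yy - \zz)/\gamma \in \partial \varphi(\zz)$, and to recognize the resulting inclusions as the first-order (saddle-point) optimality conditions of the Lagrangian $\mathcal{L}$. First I would observe that the equation $\uu^+ = \uu$ reads $\uu + (\xx(\zz,\uu) - \zz^+)/\gamma = \uu$, which together with $\zz^+ = \zz$ forces $\xx(\zz,\uu) = \zz$; thus at a fixed point the auxiliary variable collapses onto $\zz$, and the three quantities $\xx(\zz,\uu)$, $\zz^+$, and $\zz$ all coincide.

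Next I would translate each proximal step into an inclusion. Applying the characterization to $\xx(\zz,\uu) = \prox_{\gamma g}(\zz - \gamma(\uu + \nabla f(\zz)))$ and substituting $\xx = \zz$ yields $-\uu - \nabla f(\zz) \in \partial g(\zz)$; applying it to $\zz^+ = \prox_{\gamma h}(\xx + \gamma \uu)$ and substituting $\zz^+ = \xx = \zz$ yields $\uu \in \partial h(\zz)$. These two inclusions are exactly the stationarity conditions of $\mathcal{L}(\xx,\uu) = f(\xx) + g(\xx) + \langle \xx, \uu\rangle - h^*(\uu)$: the first is $0 \in \nabla f(\zz) + \partial g(\zz) + \uu$ (stationarity in $\xx$), and the second, through the conjugate subgradient relation $\uu \in \partial h(\zz) \iff \zz \in \partial h^*(\uu)$, is $0 \in -\zz + \partial h^*(\uu)$ (stationarity in $\uu$). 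Hence every fixed point is a saddle point of $\mathcal{L}$.

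I would then verify the converse, which is what turns the characterization into a set equality rather than a mere inclusion: assuming the two inclusions hold, one recomputes the proximal operators and checks that $\xx(\zz,\uu) = \zz$, $\zz^+ = \zz$, and $\uu^+ = \uu$. Concretely, $-\uu - \nabla f(\zz) \in \partial g(\zz)$ is precisely the optimality condition certifying $\prox_{\gamma g}(\zz - \gamma(\uu+\nabla f(\zz))) = \zz$, and $\uu \in \partial h(\zz)$ certifies $\prox_{\gamma h}(\zz + \gamma \uu) = \zz$; the update for $\uu^+$ then returns $\uu$ automatically. This shows $\Fix(\boldsymbol{T}_\gamma)$ coincides exactly with the set of saddle points of $\mathcal{L}$, and notably this holds for every $\gamma > 0$.

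Finally, to obtain the advertised product structure I would invoke strong duality (Assumption 2), already recorded in the excerpt as the equivalence between saddle points of $\mathcal{L}$ and pairs of minimizers of $P$ and $D$: every saddle point has its primal component in $\mathcal{P}^\star$ and its dual component in $\mathcal{D}^\star$, and conversely any such pair is a saddle point. This decoupling is exactly what makes the saddle-point set a Cartesian product, yielding $\Fix(\boldsymbol{T}_\gamma) = \mathcal{P}^\star \times \mathcal{D}^\star$. The main obstacle I anticipate lies less in any single step than in being careful about the reversibility of the prox-to-inclusion translation (required for set equality rather than one-sided inclusion) and in correctly applying the conjugate subgradient relation so that the dual stationarity condition emerges cleanly; the product structure itself is then a soft consequence of convex duality rather than anything specific to this operator.
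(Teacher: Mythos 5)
Your proof is correct, and the forward half (fixed point $\Rightarrow$ the inclusions $-\uu-\nabla f(\zz)\in\partial g(\zz)$ and $\uu\in\partial h(\zz)$) coincides with the paper's chain of equivalences. Where you genuinely diverge is in the converse inclusion $\mathcal{P}^\star\times\mathcal{D}^\star\subseteq\Fix(\TT_\gamma)$. You identify $\Fix(\TT_\gamma)$ with the set of saddle points of $\mathcal{L}$ (the inclusions are exactly joint stationarity, and since $f$ is everywhere differentiable the subdifferential sum rule needed for this is free) and then invoke the strong-duality fact, already recorded in the paper's \S\ref{scs:analysis} with a citation to Bertsekas, that \emph{any} pair consisting of a minimizer of $P$ and a minimizer of $D$ forms a saddle point; this is the soft duality-gap sandwich $-D(\uu^\star)\le\mathcal{L}(\xx^\star,\uu^\star)\le P(\xx^\star)$ with both ends equal. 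The paper instead proves the converse directly at the level of subdifferentials: given arbitrary $\zz\in\mathcal{P}^\star$ and $\uu\in\mathcal{D}^\star$, it extracts separate optimality certificates $\uu_\zz$ and $\zz_\uu$, shows $\langle\uu_\zz-\uu,\zz-\zz_\uu\rangle=0$ by monotonicity, and uses \emph{paramonotonicity} of $\partial h$ and $\partial(f+g)$ to transfer the certificates onto the pair $(\zz,\uu)$. Your route is shorter and avoids introducing paramonotonicity, at the cost of leaning on the saddle-point/minimizer equivalence as a black box; the paper's argument is self-contained and makes visible the mechanism (paramonotonicity of convex subdifferentials) that forces the product structure, which is the same tool used for the analogous statement about Douglas--Rachford. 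The one point to be explicit about if you wrote this up is the step ``the two stationarity inclusions hold jointly $\iff$ saddle point,'' which is where the decoupling you call a ``soft consequence'' is actually earned; as you present it, that burden is carried entirely by the quoted strong-duality equivalence, which is legitimate.
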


\subsection{Gap function}

The progress of optimization methods is commonly measured in terms of a gap or merit function that is zero at optimum and nonzero otherwise.
An appropriate gap function for many first-order methods is the suboptimality of the objective function.
However, this is not an appropriate suboptimality measure for this algorithm, as the objective function might be $+\infty$ at an iterate, for example when the two proximal terms are an indicator function.

\citet{davis2015three} avoid the issue by either evaluating $h$ and $g$ at different iterates \citep[Corollary D.5.1]{davis2015three} or assuming Lipschitz continuity of one of the proximal terms \citep[Corollary D.5.2]{davis2015three}.

In this work we take an alternative approach, and instead use the following \emph{saddle point suboptimality} criterion to measure the progress of our algorithm:
\begin{equation}\label{eq:gap_function}
  {\mathcal{L}(\xx_{t+1}, \uu) - \mathcal{L}(\xx, \uu_{t+1})}~.
\end{equation}
From the definition of saddle point in Eq.~\eqref{eq:saddle_point},
this criterion is non-positive for all $(\xx, \uu)$ if and only if $(\xx_{t+1}, \uu_{t+1})$ is a saddle point, and is so an appropriate suboptimality criterion.
Furthermore, contrary to the primal objective function, this is defined for all iterates without further assumptions.
Finally, we mention that this criteria has been previously used in the analysis of primal-dual methods, see e.g., \citet{chambolle2016introduction,chambolle2016ergodic} and \citet{gidel17a} for a discussion of saddle point gap functions.

This suboptimality criteria can also be related to the primal and dual gap, as minimizing \eqref{eq:gap_function} over $\xx$ and maximizing over $\uu$ one recovers the primal-dual gap $P(\xx_t) - D(\uu_t)$ by definition of Fenchel conjugate.

\subsection{Sublinear convergence}

The following theorem gives a sublinear convergence rate for Algorithm~\ref{alg:algo_three_ls}. This convergence will be given in terms of the weighted ergodic (i.e., averaged) sequence.
Denoting by $s_t$ the sum of all step-sizes up to iteration $t$, i.e., $s_t \defas \sum_{i=0}^{t-1} \gamma_t$, the ergodic iterates $\overline{\xx}_t$ and $\overline{\uu}_t$ are defined as
\begin{equation}
\overline{\xx}_t \defas \Big(\sum_{i=0}^{t-1}\gamma_i \xx_{i+1}\Big)/{s_t } \,,\;\; \overline{\uu}_t \defas \Big({\sum_{i=0}^{t-1}\gamma_i \uu_{i+1}}\Big)/{s_t }\,.
\end{equation}
While results in this subsection will be stated in terms of this ergodic sequence, in practice the last iterate gives most often a better empirical convergence, see e.g.,~\citep[\S7.2.1]{chambolle2016ergodic} for a discussion of this phenomenon. For a more theoretically-sound algorithm, one can compare the objective at the ergodic and last iterate, and return the one with smallest objective.

\begin{theorem}[sublinear convergence rate]\label{thm:sublinear}
For every ${t \geq 0}$ and any $(\xx, \balpha)$ in the domain of $\mathcal{L}$ we have the following convergence rate for Algorithm~\ref{alg:algo_three_ls} (both variants):
\begin{equation*}\label{eq:ergodic_convergence_rate}
\mathcal{L}(\overline\xx_{t+1}, \uu)- \mathcal{L}(\xx, \overline\uu_{t+1}) \leq \frac{\|\zz_0 - \xx\|^2 + {\gamma_0^2\|\uu_0 - \uu\|^2}}{2 s_t}\,.
\end{equation*}
\end{theorem}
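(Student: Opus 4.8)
The plan is to derive a telescoping per-iteration bound on the saddle-point gap $\mathcal{L}(\xx_{t+1},\uu)-\mathcal{L}(\xx,\uu_{t+1})$, sum it, and pass to the ergodic iterates by Jensen's inequality. First I would record the first-order optimality conditions of the two proximal maps. Line~\ref{line:x_update} yields $\tfrac{1}{\gamma_t}(\zz_t-\xx_{t+1})-\uu_t-\nabla f(\zz_t)\in\partial g(\xx_{t+1})$, while Lines~\ref{line:z_update}--\ref{line:u_update} combine into the clean inclusion $\uu_{t+1}\in\partial h(\zz_{t+1})$, equivalently $\zz_{t+1}\in\partial h^*(\uu_{t+1})$. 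By the subgradient inequality these give, for any $(\xx,\uu)$ in the domain, $g(\xx_{t+1})-g(\xx)\le\langle\tfrac{1}{\gamma_t}(\zz_t-\xx_{t+1})-\uu_t-\nabla f(\zz_t),\,\xx_{t+1}-\xx\rangle$ and $h^*(\uu_{t+1})-h^*(\uu)\le\langle\zz_{t+1},\,\uu_{t+1}-\uu\rangle$. For the smooth term I would combine the convexity lower bound $f(\xx)\ge f(\zz_t)+\langle\nabla f(\zz_t),\xx-\zz_t\rangle$ with the sufficient-decrease condition of Line~\ref{line:ls}, written as the equality $f(\xx_{t+1})=Q_t(\xx_{t+1},\gamma_t)-\delta_t$, so as to retain the nonnegative slack $\delta_t$ for later use in Variant~2.

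Adding the three bounds to the bilinear part of the gap, the terms $\langle\nabla f(\zz_t),\cdot\rangle$ cancel identically, leaving an expression in the increments $\zz_t-\xx_{t+1}$, $\xx_{t+1}-\zz_{t+1}=\gamma_t(\uu_{t+1}-\uu_t)$, and the dual gap $\uu_{t+1}-\uu$. The core (routine) calculation is to convert each inner product into squared norms via the polarization identity $2\langle a-b,\,b-c\rangle=\|a-c\|^2-\|a-b\|^2-\|b-c\|^2$. Every cross term and every $\|\xx_{t+1}-\zz_t\|^2$ and $\|\xx_{t+1}-\zz_{t+1}\|^2$ term then cancels, and after multiplying through by $\gamma_t$ one is left with the telescoping per-iteration inequality
\begin{align*}
\gamma_t\big[\mathcal{L}(\xx_{t+1},\uu)-\mathcal{L}(\xx,\uu_{t+1})\big]\le{}& \tfrac12\|\zz_t-\xx\|^2-\tfrac12\|\zz_{t+1}-\xx\|^2\\
&+\tfrac{\gamma_t^2}{2}\|\uu_t-\uu\|^2-\tfrac{\gamma_t^2}{2}\|\uu_{t+1}-\uu\|^2-\gamma_t\delta_t\,.
\end{align*}

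Summing from $i=0$ to $t$, the primal terms $\tfrac12\|\zz_i-\xx\|^2$ telescope directly and contribute at most $\tfrac12\|\zz_0-\xx\|^2$. The dual part does \emph{not} telescope cleanly because of the varying weights $\gamma_i^2$; here I would use Abel summation to bound $\sum_i\gamma_i^2(\|\uu_i-\uu\|^2-\|\uu_{i+1}-\uu\|^2)$ by $\gamma_0^2\|\uu_0-\uu\|^2$ plus the residual $\sum_{i\ge1}(\gamma_i^2-\gamma_{i-1}^2)\|\uu_i-\uu\|^2$ (up to the factor $\tfrac12$). In Variant~1 the step-size is nonincreasing, so $\gamma_i^2-\gamma_{i-1}^2\le0$ and the residual is harmless. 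This residual is the step I expect to be the main obstacle in \emph{Variant~2}, where the step-size may grow: there I would use $\uu_i\in\partial h(\zz_i)$ together with the $\beta_h$-Lipschitz continuity of $h$ to bound $\|\uu_i-\uu\|\le2\beta_h$, and invoke the growth constraint of Line~\ref{line:increase_ls}, which guarantees exactly $\gamma_{i+1}^2-\gamma_i^2\le\gamma_i\delta_i(2\beta_h)^{-2}$. Each positive dual residual is then dominated by $\tfrac12\gamma_{i-1}\delta_{i-1}$, so the whole residual is at most $\tfrac12\sum_i\gamma_i\delta_i$, which is absorbed by the accumulated $-\sum_i\gamma_i\delta_i$ already present, leaving a nonpositive net contribution.

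Combining the two telescoped pieces gives, for both variants, $\sum_{i=0}^{t}\gamma_i\big[\mathcal{L}(\xx_{i+1},\uu)-\mathcal{L}(\xx,\uu_{i+1})\big]\le\tfrac12\|\zz_0-\xx\|^2+\tfrac{\gamma_0^2}{2}\|\uu_0-\uu\|^2$. Finally I would divide by $s_{t+1}=\sum_{i=0}^t\gamma_i\ge s_t$ and pass to the ergodic iterates using Jensen's inequality: convexity of $\xx\mapsto\mathcal{L}(\xx,\uu)$ gives $\mathcal{L}(\overline\xx_{t+1},\uu)\le\tfrac{1}{s_{t+1}}\sum_i\gamma_i\mathcal{L}(\xx_{i+1},\uu)$, and concavity of $\uu\mapsto\mathcal{L}(\xx,\uu)$ gives the reverse inequality for $\mathcal{L}(\xx,\overline\uu_{t+1})$; subtracting and bounding $1/s_{t+1}\le1/s_t$ yields the claimed rate.
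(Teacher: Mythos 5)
Your proposal is correct and follows essentially the same route as the paper: the same two subgradient inclusions, the sufficient-decrease equality retaining the slack $\delta_t$, the cosine/polarization identity to produce a telescoping bound, the $\|\uu_i-\uu\|\le 2\beta_h$ bound from Lipschitz continuity of $h$ combined with the growth constraint of Line~\ref{line:increase_ls} to absorb the non-monotone step-sizes in Variant~2, and Jensen's inequality at the end. The only cosmetic difference is that the paper handles the varying dual weights per iteration (replacing $\gamma_i^2\|\uu_i-\uu\|^2$ by $\gamma_{i-1}^2\|\uu_i-\uu\|^2$ plus $\gamma_{i-1}\delta_{i-1}$ inside the key recursive lemma) whereas you defer this to an Abel summation after summing; the arithmetic is identical.
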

\paragraph{Convergence in terms of function value suboptimality.}
The previous result gives an $\mathcal{O}(1/t)$ convergence rate for arbitrary convex functions in terms of the saddle point suboptimality. As we have discussed previously, it is not possible to obtain similar rates in terms of the function suboptimality without further assumptions.
We will now show that it is sufficient to assume Lipschitz continuity on $h$ to derive from the previous theorem a convergence rate in terms of the primal function suboptimality.

The following Corollary can be obtained by optimizing with respect to $\uu$ the bound in the previous theorem and using the Lipschitz continuity to bound $\|\uu_0 - \uu\|^2$. This gives an $\mathcal{O}(1/t)$ convergence rate for the primal function suboptimality, roughly matching that of \citet[Corollary D.5.2]{davis2015three} for the non adaptive  variant:
\begin{corollary}\label{cor:sublinear_convergence}
Let $h$ be $\beta_h$-Lipschitz. Then, we have the following rate for the weighted ergodic iterate on the objective $P(\xx) \defas f(\xx) + g(\xx) + h(\xx)$:
\begin{equation*}
P(\overline\xx_{t+1}) - P(\xx^\star) \leq  \frac{\|\zz_0 - \xx^\star\|^2\!+ 2 \gamma_0^2(\|\uu_0\|^2\!+ \beta_h^2)}{2 s_t}\,.
\end{equation*}
\end{corollary}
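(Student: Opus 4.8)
\textbf{Proof proposal for Corollary~\ref{cor:sublinear_convergence}.}

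The plan is to derive the primal suboptimality bound from Theorem~\ref{thm:sublinear} by making a clever choice of the free variables $(\xx, \uu)$ in the saddle-point bound and then eliminating the dependence on the (unbounded) dual variable via the Lipschitz assumption. First I would set $\xx = \xx^\star$ (a primal minimizer) on the right-hand side of the theorem, so that the term $-\mathcal{L}(\xx^\star, \overline\uu_{t+1})$ appears. My aim is to show that, after this substitution, the left-hand side $\mathcal{L}(\overline\xx_{t+1}, \uu) - \mathcal{L}(\xx^\star, \overline\uu_{t+1})$ dominates the primal gap $P(\overline\xx_{t+1}) - P(\xx^\star)$ once we also choose $\uu$ appropriately. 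Using $\mathcal{L}(\xx, \uu) = f(\xx) + g(\xx) + \langle \xx, \uu\rangle - h^*(\uu)$ together with the Fenchel--Young identity $\langle \overline\xx_{t+1}, \uu\rangle - h^*(\uu) = h(\overline\xx_{t+1})$ when $\uu \in \partial h(\overline\xx_{t+1})$, the term $\mathcal{L}(\overline\xx_{t+1}, \uu)$ becomes exactly $P(\overline\xx_{t+1})$ for that subgradient choice. Symmetrically, since $\xx^\star$ minimizes $P$ and $\uu^\star$ is dual optimal, the term $\mathcal{L}(\xx^\star, \overline\uu_{t+1})$ is bounded below by $P(\xx^\star)$ (using $\langle \xx^\star, \overline\uu_{t+1}\rangle - h^*(\overline\uu_{t+1}) \leq h(\xx^\star)$ by Fenchel--Young), so that the left-hand side is at least $P(\overline\xx_{t+1}) - P(\xx^\star)$.

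Next I would control the right-hand side of Theorem~\ref{thm:sublinear}, namely $\tfrac{\|\zz_0 - \xx^\star\|^2 + \gamma_0^2 \|\uu_0 - \uu\|^2}{2 s_t}$, for the specific choice $\uu \in \partial h(\overline\xx_{t+1})$ made above. The key fact is that if $h$ is $\beta_h$-Lipschitz then every subgradient of $h$ has norm at most $\beta_h$, i.e. $\|\uu\| \leq \beta_h$ for any $\uu \in \partial h(\overline\xx_{t+1})$. Therefore I can bound $\|\uu_0 - \uu\|^2 \leq 2\|\uu_0\|^2 + 2\|\uu\|^2 \leq 2\|\uu_0\|^2 + 2\beta_h^2$ using the elementary inequality $\|a - b\|^2 \leq 2\|a\|^2 + 2\|b\|^2$. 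Substituting this into the numerator yields $\|\zz_0 - \xx^\star\|^2 + 2\gamma_0^2(\|\uu_0\|^2 + \beta_h^2)$, which is exactly the stated bound.

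The main obstacle, and the step requiring the most care, is the very first one: justifying that evaluating $\mathcal{L}$ at a particular subgradient $\uu \in \partial h(\overline\xx_{t+1})$ converts the saddle-point gap into a genuine \emph{lower} bound on the primal gap. Concretely I must verify that such a subgradient exists (which follows from $h$ being finite-valued and Lipschitz, hence subdifferentiable everywhere) and that the two Fenchel--Young (in)equalities point in the correct directions so that no slack is introduced against us. Once the identity $\mathcal{L}(\overline\xx_{t+1}, \uu) = P(\overline\xx_{t+1})$ and the bound $\mathcal{L}(\xx^\star, \overline\uu_{t+1}) \geq P(\xx^\star) - [\text{nonnegative slack}]$ are pinned down, the remainder is the routine norm manipulation above. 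I would expect the cleanest writeup to phrase the whole argument as "optimize the bound of Theorem~\ref{thm:sublinear} over $\uu$ restricted to the ball of radius $\beta_h$," which is how the corollary statement motivates it, but the subgradient substitution makes the mechanism fully explicit.
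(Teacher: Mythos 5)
Your proposal is correct and follows essentially the same route as the paper: both pick $\uu$ to be the maximizer of $\mathcal{L}(\overline\xx_{t+1},\cdot)$ (i.e.\ a subgradient of $h$ at $\overline\xx_{t+1}$) so that $\mathcal{L}(\overline\xx_{t+1},\uu)=P(\overline\xx_{t+1})$, lower-bound the other term by $P(\xx^\star)$, apply Theorem~\ref{thm:sublinear} with $\xx=\xx^\star$, and then use $\|\uu\|\le\beta_h$ (subgradients of a $\beta_h$-Lipschitz function lie in $\dom h^*$, hence are bounded by $\beta_h$) together with $\|a-b\|^2\le 2\|a\|^2+2\|b\|^2$. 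Your handling of the term $\mathcal{L}(\xx^\star,\overline\uu_{t+1})\le P(\xx^\star)$ via Fenchel--Young is a touch more direct than the paper's detour through the saddle point $(\xx^\star,\uu^\star)$, but the mechanism is the same.
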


\subsection{Linear convergence}

In this subsection we assume that $f$ is $\mu_f$-strongly convex and $h$ is $L_h$-smooth (with $0 < \mu_f, 0 < L_h < +\infty$). We denote by $\xx^\star$ the minimizer of the primal loss (unique by strong convexity of $P$) and by $\uu^\star$ the minimizer of the dual loss (also unique by strong convexity of $D$, consequence of the $L_h$-smoothness of $h$).

The convergence rates will be given in terms of the following quantities
\begin{equation}\label{eq:def_linear_rates}
\begin{aligned}
   &\rho \defas \mu_f\min\{\gamma_0,  \tau / L_f\}  ~,~  \sigma \defas 1/(1 + \gamma_0 L_h)\\
   &\qquad\qquad\quad\xi \defas {\mu_f}/{(\mu_f + L_h)}~.
\end{aligned}
\end{equation}
All these belong to the interval $(0, 1)$. Assuming $\gamma_0 \geq \tau/L_f$, then
$\rho$ is the inverse of $f$'s condition number, while $\sigma$ depends the smoothness of $h$. $\xi$ is only used by variant 2 and is a less tight bound that $\sigma$ that depends on both the strong convexity of $f$ and smoothness of $h$. Note that by strong convexity, $\gamma_0 < 1/\mu_f$ as otherwise the sufficient decrease condition would not succeed and so $\sigma \geq \xi$.

\begin{theorem}[linear convergence rate]\label{thm:linear_convergence}
Let $\xx_{t+1}, \uu_{t+1}$ be the iterates  produced by Algorithm~\ref{alg:algo_three_ls} after $t$ iterations. Then we have the following linear convergence for Variant 1 (V1) and Variant 2 (V2):
\begin{align}
  &\text{V1}: \|\xx_{t+1} - \xx^\star\|^2 \leq \Big(1 - \min\big\{\rho, \sigma\big\}\Big)^{t+1} D_0\\
  &\text{V2}: \|\xx_{t+1} - \xx^\star\|^2 \leq \Big(1 - \min\big\{\rho,\xi, \mfrac{1}{2}\big\}\Big)^{t+1} E_0~,
\end{align}
with $D_0 \defas 6\|\zz_0 - \xx^\star\|^2 + \frac{6}{1 - \sigma}\|\gamma_0(\uu_0 - \uu^\star)\|^2$
 and ${E_0 \defas 6\|\zz_0 - \xx^\star\|^2 + \frac{6}{1 - \xi}\|\gamma_0(\uu_0 - \uu^\star)\|^2}$.
\end{theorem}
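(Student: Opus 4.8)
The plan is to prove linear convergence through a combined primal--dual Lyapunov argument. I would introduce the energy
$$\Phi_t \defas \|\zz_t - \xx^\star\|^2 + c\,\|\gamma_t(\uu_t - \uu^\star)\|^2 \, ,$$
with weight $c = 1/(1-\sigma)$ for Variant~1 and $c = 1/(1-\xi)$ for Variant~2, and prove a one-step contraction $\Phi_{t+1} \leq (1-r)\,\Phi_t$ with $r = \min\{\rho,\sigma\}$ (resp. $r = \min\{\rho,\xi,\tfrac12\}$). Unrolling gives $\Phi_{t+1} \leq (1-r)^{t+1}\Phi_0$, and since the initial values were chosen so that $D_0 = 6\,\Phi_0$ (resp. $E_0 = 6\,\Phi_0$), the theorem follows once the returned iterate is controlled by the potential, $\|\xx_{t+1}-\xx^\star\|^2 \leq 6\,\Phi_{t+1}$, the factor $6$ absorbing the losses from a few applications of $\|a+b\|^2 \le 2\|a\|^2+2\|b\|^2$ when passing from the pair $(\zz_{t+1},\uu_{t+1})$ back to $\xx_{t+1} = \zz_{t+1} + \gamma_t(\uu_{t+1}-\uu_t)$.

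The heart of the argument is the one-step contraction. First I would record the subgradient inclusions produced by the two proximal steps: Line~\ref{line:z_update} gives $\uu_{t+1} \in \partial h(\zz_{t+1})$, while Line~\ref{line:x_update} gives $(\zz_t - \xx_{t+1})/\gamma_t - \uu_t - \nabla f(\zz_t) \in \partial g(\xx_{t+1})$; Theorem~\ref{thm:fixed_point} supplies the companion optimality inclusions $\uu^\star \in \partial h(\xx^\star)$ and $-\uu^\star - \nabla f(\xx^\star) \in \partial g(\xx^\star)$. I would then combine three monotonicity estimates: plain monotonicity of $\partial g$ at $(\xx_{t+1},\xx^\star)$; $\mu_f$-strong monotonicity of $\nabla f$ (strong convexity of $f$) at $(\zz_t,\xx^\star)$, which produces the primal contraction governed by $\rho$; and $(1/L_h)$-strong monotonicity of $\partial h^*$ (equivalent to $L_h$-smoothness of $h$, invoked through $\zz_{t+1} \in \partial h^*(\uu_{t+1})$ and $\xx^\star \in \partial h^*(\uu^\star)$) at $(\uu_{t+1},\uu^\star)$, which produces the dual contraction governed by $\sigma$. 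Substituting $\xx_{t+1} = \zz_{t+1} + \gamma_t(\uu_{t+1}-\uu_t)$ and expanding the inner products with the three-point (cosine) identity turns everything into the squared distances defining $\Phi_t$ and $\Phi_{t+1}$, leaving a negative multiple of the strong-convexity/smoothness terms that is exactly the contraction. The weight $c$ is tuned so that the primal and dual parts contract at a common rate.

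The main obstacle is the adaptive, iteration-dependent step-size, which enters $\Phi_t$ through the weight $\gamma_t^2$ on the dual term. For Variant~1 the step-size is non-increasing and bounded below by $\min\{\gamma_0,\tau/L_f\}$, so the decay of $\gamma_t^2$ only helps the dual term; the lower bound turns $\mu_f\gamma_t$ into $\rho$, and the upper bound $\gamma_t \leq \gamma_0$ yields the worst-case dual factor $1-\sigma = \gamma_0 L_h/(1+\gamma_0 L_h)$. Variant~2 is the delicate case: here $\gamma_{t+1}$ may exceed $\gamma_t$, so the dual weight \emph{grows} and can a priori destroy monotonicity of $\Phi_t$. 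The key is that the admissible growth on Line~\ref{line:increase_ls} is capped by precisely the amount $\gamma_t\delta_t(2\beta_h)^{-2}$ financed by the line-search slack $\delta_t = Q_t(\xx_{t+1},\gamma_t) - f(\xx_{t+1}) \geq 0$; using $\beta_h$-Lipschitzness of $h$ to bound $\|\uu_{t+1}\|$ by $\beta_h$, this slack absorbs the extra $\gamma_{t+1}^2 - \gamma_t^2$ generated by the weight change. Carrying this bookkeeping through is what replaces $\sigma$ by the looser $\xi = \mu_f/(\mu_f+L_h)$ and introduces the additional $\tfrac12$ in the rate. I expect this coupling between the line-search residual and the permitted step-size growth to be the technically most demanding step; the remainder is standard monotone-operator and convex-analysis manipulation.
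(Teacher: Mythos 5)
Your proposal follows essentially the same route as the paper: the paper first proves a ``key recursive inequality'' for the saddle-point gap $\mathcal{L}(\xx_{t+1},\uu)-\mathcal{L}(\xx,\uu_{t+1})$ (Lemma~\ref{lemma:key_recursive}) from exactly the subgradient inclusions, strong monotonicity of $\nabla f$ and $\partial h^*$, cosine identities and step-size bounds you list, then specializes to $(\xx^\star,\uu^\star)$, drops the nonnegative gap, and contracts the same weighted potential (with dual weight $\gamma_{t-1}^2$ rather than your $\gamma_t^2$), finishing with the three-term Young inequality to obtain the factor $6$. The one detail to make explicit for Variant~2 is that the potential must be augmented to $\|\zz_t-\xx^\star\|^2+(1-\xi)^{-1}\|\gamma_{t-1}(\uu_t-\uu^\star)\|^2+2\gamma_{t-1}\delta_{t-1}$, since your $\Phi_t$ alone does not contract one step at a time: the saved slack $2\gamma_{t-1}\delta_{t-1}$ pays for the step-size growth while only $\gamma_{t-1}\delta_{t-1}$ is spent at the next iteration, and this factor-two mismatch is precisely where the extra $\tfrac{1}{2}$ in the rate comes from, as you anticipated.
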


\paragraph{Discussion.} For $\gamma_t = {1}/{L_f}$, the sufficient decrease condition is always verified and the algorithm can be run with $\tau=1$. In this case, Variant 1 of Algorithm~\ref{alg:algo_three_ls} defaults to TOS, and we can compare the obtained rates with those in \citep{davis2015three}.

While the sublinear convergence rate obtained in Corollary~\ref{cor:sublinear_convergence} roughly matches the rate obtained in (\citet[Corollary D.5.2, see our \ref{apx:comparison_convergence_rates}]{davis2015three}),
the linear convergence rates are instead significantly different. The linear convergence rate obtained in  \citep[Theorem D.6.6]{davis2015three}, after optimizing for all parameters, yields a rate of $\rho \sigma^{2}$, which is \emph{strictly worse} than the $\min\{\rho, \sigma\}$ rate that we obtained. This difference can be quite large, e.g., for $\rho=\sigma$ this becomes $\rho$ versus $\rho^{3}$.

Finally, we note that the number of evaluations of the sufficient decrease condition can be bounded as in \citep[Lemma 4]{nesterov2013gradient}.

\section{Applications} \label{scs:applications}

\subsection{Learning with Multiple Penalties}\label{scs:learning_multiple_penalties}

In this subsection we discuss how some penalties with costly to compute proximal operator  can be decomposed as a sum of proximal terms and so fall within the current framework. The exact expression of the proximal operators is given in~\ref{apx:proximal_operators}.

\paragraph{Group lasso with overlap.} \citet{jacob2009group} generalized group $\ell_1$ norm by allowing each variable to belong to more than one group, thereby introducing overlaps among groups and allowing for more complex prior knowledge on the structure. For a set of subindices $\mathcal{G}$ which we will call groups, this penalty is defined as $\|\xx\|_\mathcal{G} = \sum_{G \in \mathcal{G}}\|[\xx]_G\|_2$. If each coefficient is at most in $s$ groups, then $\mathcal{G}$ can be decomposed as $\mathcal{G}= \mathcal{G}_1 \cup \ldots \cup \mathcal{G}_s$, where the $\mathcal{G}_i$ are disjoint. This allows to express the group lasso with overlap as a sum of $s$ non-overlapping group lasso penalties, for which the proximal operator has a closed form expression.

\paragraph{Multidimensional total variation.} For the task of image restoration and denoising it is common to consider a regularization term in the form of a total variation regularizer. For an image $\xx$ of size $p \times q$, the 2-dimensional total variation norm $\|{\boldsymbol X}\|_{\text{TV}}$ is defined as
\begin{equation*}
 \underbrace{{\sum_{i=1}^p} {\sum_{j=1}^{q-1}} |{\boldsymbol X}_{i, j+1} - {\boldsymbol X}_{i, j}|}_{ = g(\XX)} + \underbrace{\sum_{j=1}^q {\sum_{ji=1}^{p-1}} |{\boldsymbol X}_{i+1, j} - {\boldsymbol X}_{i, j}|}_{ = h(\XX)}\,.
 \end{equation*}
From here we recognize that $g$ and $h$ are fused lasso (also known as 1D-total variation) penalties acting on the columns and rows of $\XX$ respectively.
Efficient methods to evaluate the proximal operator of the fused lasso penalty have been developed by~\citet{condat2013direct, johnson2013dynamic}.

\paragraph{Isotonic and nearly isotonic penalties.} In some applications there exists a natural ordering between variables: $\xx_1 \leq \xx_2 \leq \cdots \leq \xx_p$.
This can be enforced through constraints, and the projection onto these is known as isotonic regression~\citep{Best1990}.
The indicator function over the set of constraints can also be split into a sum of two proximal terms (see \ref{apx:isotonic}) as
\begin{align}
  &\imath\{\xx_1 \leq \xx_2 \leq \xx_3 \leq \xx_4 \leq  \cdots\}\\
  &= \underbrace{\imath\{\xx_1\! \leq \!\xx_2; \xx_3 \leq \xx_4;  \cdots\}}_{=g(\xx)}  + \underbrace{\imath\{\xx_2 \leq \xx_3; \xx_4 \leq \xx_5; \cdots\}}_{=h(\xx)}\,. \nonumber
\end{align}
In cases in which the variables are only ``mostly'' non-decreasing, the constraint can be relaxed via a nearly-isotonic penalty~\citep{tibshirani2011nearly} of the form $\sum_{i=1}^{p-1}\max\{\xx_i - \xx_{i+1}, 0\}$, in which only the non-increasing coefficients are penalized. This penalty can be split  the same way as the isotonic constraints above.

\paragraph{$\ell_1$ trend filtering.}  This penalty is defined by the absolute value of the second order differences and promotes piecewise-linear coefficients \citep{kim2009ell1}. It is defined as
$\|\xx\|_\text{TF} \defas \textstyle\sum_{i=1}^{p-2} |\xx_i - 2 \xx_{i+1} + \xx_{i+2}|$.
We can split this sum into 3 proximal terms such that the resulting terms: the $j$-th term contains the factors for which $i$ is congruent to 3 modulo $j$.

\paragraph{Constraints over doubly stochastic matrices.} Optimization problems with constraints on the set of doubly stochastic matrices appear in many convex relaxations of combinatorial problems such as seriation~\citep{fogel2013convex}, quadratic assignment~\citep{lawler1963quadratic} and graph matching~\citep{conte2004thirty,Aflalo2015}. The set of double stochastic matrices is composed of square matrices with nonnegative entries, each of whose rows and columns sum to $1$, i.e., $\{\XX^T \boldsymbol{1} = \boldsymbol{1}, \XX \boldsymbol{1} = \boldsymbol{1}, \XX \geq \boldsymbol{0}\}$. The indicator function over this set can be split as
\begin{equation}
\underbrace{\imath\{\XX^T \boldsymbol{1} = \boldsymbol{1}, \XX \boldsymbol{1} = \boldsymbol{1}\}}_{=g(\XX)} + \underbrace{\imath\{\XX \geq \boldsymbol{0}\}}_{=h(\XX)} \quad,
\end{equation}
and the projection onto both sets is available in closed form~\citep[\S4.3]{lu2016fast}.

\begin{figure*}[tb!]
\includegraphics[width=\linewidth]{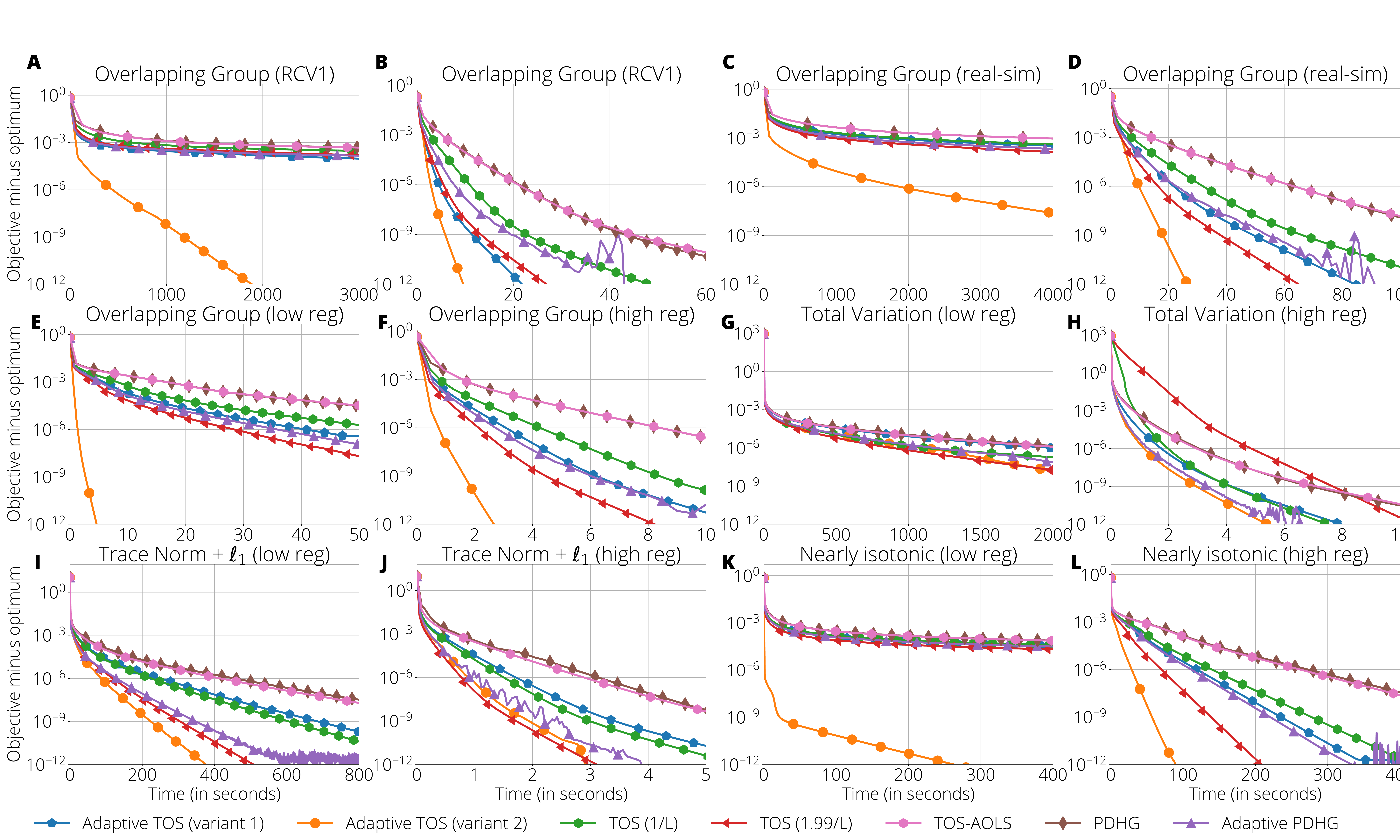}
\caption{{\bfseries Comparison of different proximal splitting methods}. The \emph{top row} gives result for two real datasets with an overlapping group lasso penalty. The \emph{second and third row} show results on synthetic datasets for 4 different penalties: overlapping group lasso (E, F), 2-dimensional total variation (G, H), trace norm + $\ell_1$ (I, J) and nearly isotonic (K, L). The Adaptive TOS (Variant 2, i.e., with growing step-sizes) is the best performing method on 10 out of 12 experiments, and roughly equivalent to the best performing method in the other 2 cases (G, J).
}\vspace{-0.5em}
\label{fig:main_figure}
\end{figure*}

\paragraph{Dispersive sparsity.} In some applications it is desirable to encourage dispersion of the sparse coefficients. This happens for example in the modeling of neural spiking, as the spikes are assumed to be spaced across time~\citep{hegde2009compressive}. \citet{el2015totally} showed that this behavior can be promoted by considering a penalty of the form $\|\xx\|_1 + \imath\{\boldsymbol B|\xx| \leq c\}$ for a matrix $\boldsymbol B$ and some predefined constant $c$, where $|\xx|$ denotes the component-wise absolute value. This penalty can be split into three proximal terms by the introduction of a dummy variable $\zz$, resulting in
$
\|\xx\|_1 + \imath\{\boldsymbol B \zz \leq \boldsymbol c\} + \imath\{\zz = |\xx|\}
$.

{\bfseries Combination by addition}. A popular method to promote the joint behavior of different penalties is by adding them.
This has been used to successfully learn models with sparse and nonnegative coefficients~\citep{yuan2007non}, sparse and low rank matrices~\citep{richard2012estimation}, sparse and piecewise constant~\citep{gramfort2013identifying}, to name a few.

\subsection{Benchmarks}\label{scs:benchmark}

In this subsection we provide an empirical evaluation of the proposed method. Due to space constraints we only give here a high level overview, deferring details as well as an extended set of experiments to \ref{apx:benchmarks}.
We consider the following methods:
\begin{itemize}[leftmargin=*]
  \item The proposed Adaptive TOS method (Algorithm~\ref{alg:algo_three_ls}), in its both variants.

  \item The TOS method of \citet{davis2015three}, with step-sizes $1/L_f$ and $1.99/L_f$ (the method is convergent for step-sizes $< 2/L_f$).

  \item The PDHG or Condat-V\~u algorithm~\citep{condat2013primal, vu2013splitting}, with step-sizes $\tau$ and $\beta/\tau$, where $\beta$ was chosen as the one giving the best overall performance over the grid $\beta=0.9, 0.5, 0.1$ (giving it a slight advantage).

  \item The adaptive PDHG of \citet{malitsky2016first}, with step-size hyperparameter $\beta$ chosen by the same technique as for PDHG.

  \item The averaged operator line search method of \citet{giselsson2016line} combined with TOS, named TOS-AOLS.
\end{itemize}

We compared these methods on 4 different problems and show the results in Figure~\ref{fig:main_figure}. In the first row we show the benchmarks on a logistic regression problem with overlapping group lasso penalty that we apply to two text datasets (\texttt{RCV1} and \texttt{real-sim}). Subfigures A and C were run with the regularization parameter chosen to give $50\%$ of sparsity, while B, E are run with higher levels of sparsity, chosen to give $5\%$ of sparsity.

In the second and third row we considered a battery of inverse problems with different penalties on synthetic datasets. These consists of a least squares (G, H, I, J) or logistic regression (rest) smooth term and 4 different penalties specified in the title of each plot (overlapping group lasso, total variation, trace norm $\ell_1$ and nearly isotonic, see \ref{apx:benchmarks} for a precise formulation). For each problem, we show 2 different benchmarks, corresponding to the low and high regularization regimes (denoted low reg and high reg).
We comment on a few trends from Fig.~\ref{fig:main_figure}:
\vspace{-0.5em}\begin{itemize}[leftmargin=*]
  \item {\bfseries Best performing method}.
  On 10 out of 12 experiments, the adaptive TOS algorithm (Variant 2) is the best performing  method, and in the other cases (E, H) its performance is roughly the same as that of the best performing method. In contrast, on 3 instances (A, I, K) it is an order of magnitude faster than the next method.
  \item {\bfseries Low vs high regularization regime.} The advantage of the adaptive method is highly correlated with the amount of regularization: in the low regularization regime, on 3 out of 6 the adaptive TOS is an order of magnitude faster then the fixed step-size method, while in the high regularization regime the difference shrinks and in the same problems is never more than a factor 2.6. 
  \item {\bfseries Uniform curvature}. The problems (G, H, I, J) in Fig.~\ref{fig:main_figure} use as smooth term a quadratic loss (i.e., constant Hessian), while the other methods use a logistic loss (non-constant Hessian). This suggests that the use of the adaptive step-size strategy (and in particular Variant 2 with its growing step-size) is more beneficial for smooth terms with non-uniform curvature. 
\end{itemize}

\section{Conclusion and Future Work}

We have presented and analyzed an adaptive step-size method to solve optimization problems consisting in a sum of a smooth term accessed through its gradient and two or more potentially non-smooth terms accessed through their proximal operator.
The method does not rely on any step-size hyperparameter (except for an initial estimate) and extensive empirical evaluation has showed computational gains on a variety of problems.
We mention two possible extensions of this work.

First, existing convergence results fail to fully explain their surprisingly good empirical convergence.
To the best of our knowledge, no work so far has derived linear convergence rates in absence of strong convexity and smoothness of one of the proximal terms for these methods (as is however empirically observed, see e.g. Figure~\ref{fig:main_figure}).

Second, it is an open question whether this or other adaptive step-size methods can be accelerated, as is the case of proximal gradient descent, which admits the adaptive FISTA variant~\citep{beck2009gradient}.

\clearpage

\ifarxiv
\section*{Acknowledgements}

FP would like to thank Wotao Yin, Walaa Moursi and Elvis Dohmatob for fruitful discussions, Michael Eickenberg for feedback on the manuscript and Patrick R. Johnstone for reporting a typo in an earlier version.

FP is funded through the European Union's Horizon 2020 research and innovation programme under the Marie Sklodorowska-Curie grant agreement 748900.
Computing time on was donated by Amazon through the program ``AWS Cloud Credits for Research''.

GG is funded by the Canada Excellence Research Chair in “Data Science for Realtime Decision-making” and by the NSERC Discovery Grant RGPIN-2017-06936.
\else
\fi

\bibliographystyle{icml2018}
\bibliography{biblio}

\begin{thebibliography}{52}
\providecommand{\natexlab}[1]{#1}
\providecommand{\url}[1]{\texttt{#1}}
\expandafter\ifx\csname urlstyle\endcsname\relax
  \providecommand{\doi}[1]{doi: #1}\else
  \providecommand{\doi}{doi: \begingroup \urlstyle{rm}\Url}\fi

\bibitem[Aflalo et~al.(2015)Aflalo, Bronstein, and Kimmel]{Aflalo2015}
Aflalo, Y., Bronstein, A., and Kimmel, R.
\newblock \href{http://doi.org/10.1073/pnas.1401651112}{On convex relaxation of
  graph isomorphism}.
\newblock \emph{Proceedings of the National Academy of Sciences}, 2015.

\bibitem[Agarwal et~al.(2010)Agarwal, Negahban, and
  Wainwright]{agarwal2010fast}
Agarwal, A., Negahban, S., and Wainwright, M.~J.
\newblock
  \href{http://papers.nips.cc/paper/3984-fast-global-convergence-rates-of-gradient-methods-for-high-dimensional-statistical-recovery.pdf}{Fast
  global convergence rates of gradient methods for high-dimensional statistical
  recovery}.
\newblock In \emph{Advances in Neural Information Processing Systems}, 2010.

\bibitem[Barbero \& Sra(2014)Barbero and Sra]{barbero2014modular}
Barbero, {\'A}. and Sra, S.
\newblock \href{https://arxiv.org/abs/1411.0589}{Modular proximal optimization
  for multidimensional total-variation regularization}.
\newblock \emph{preprint arXiv:1411.0589}, 2014.

\bibitem[Bauschke \& Combettes(2017)Bauschke and Combettes]{bauschke2017convex}
Bauschke, H.~H. and Combettes, P.~L.
\newblock \emph{\href{http://dx.doi.org/10.1007/978-3-319-48311-5}{Convex
  analysis and monotone operator theory in Hilbert spaces}}.
\newblock Springer Science \& Business Media, 2017.

\bibitem[Bauschke et~al.(2012)Bauschke, Bo{\c{t}}, Hare, and
  Moursi]{bauschke2012attouch}
Bauschke, H.~H., Bo{\c{t}}, R.~I., Hare, W.~L., and Moursi, W.~M.
\newblock \href{https://doi.org/10.1016/j.jat.2012.05.008}{Attouch--Th{\'e}ra
  duality revisited: paramonotonicity and operator splitting}.
\newblock \emph{Journal of Approximation Theory}, 2012.

\bibitem[Beck \& Teboulle(2009)Beck and Teboulle]{beck2009gradient}
Beck, A. and Teboulle, M.
\newblock \href{https://doi.org/10.1017/CBO9780511804458.003}{Gradient-based
  algorithms with applications to signal recovery}.
\newblock \emph{Convex Optimization in Signal Processing and Communications},
  2009.

\bibitem[Bertsekas(2015)]{bertsekas2015convex}
Bertsekas, D.~P.
\newblock \emph{\href{http://www.athenasc.com/convexalg.html}{Convex
  optimization algorithms}}.
\newblock Athena Scientific Belmont, 2015.

\bibitem[Best \& Chakravarti(1990)Best and Chakravarti]{Best1990}
Best, M.~J. and Chakravarti, N.
\newblock \href{http://dx.doi.org/10.1007/BF01580873}{Active set algorithms for
  isotonic regression; A unifying framework}.
\newblock \emph{Mathematical Programming}, 1990.

\bibitem[Boyd et~al.(2011)Boyd, Parikh, Chu, Peleato, and
  Eckstein]{boyd2011distributed}
Boyd, S., Parikh, N., Chu, E., Peleato, B., and Eckstein, J.
\newblock \href{http://dx.doi.org/10.1561/2200000016}{Distributed optimization
  and statistical learning via the alternating direction method of
  multipliers}.
\newblock \emph{Foundations and Trends in Machine Learning}, 2011.

\bibitem[Cai et~al.(2010)Cai, Cand{\`e}s, and Shen]{cai2010singular}
Cai, J.-F., Cand{\`e}s, E.~J., and Shen, Z.
\newblock \href{https://doi.org/10.1137/080738970}{A singular value
  thresholding algorithm for matrix completion}.
\newblock \emph{SIAM Journal on Optimization}, 2010.

\bibitem[Cand{\`e}s et~al.(2006)Cand{\`e}s, Romberg, and Tao]{candes2006robust}
Cand{\`e}s, E.~J., Romberg, J., and Tao, T.
\newblock \href{https://doi.org/10.1109/TIT.2005.862083}{Robust uncertainty
  principles: exact signal reconstruction from highly incomplete frequency
  information}.
\newblock \emph{IEEE Transactions on information theory}, 2006.

\bibitem[Chambolle \& Pock(2015)Chambolle and Pock]{chambolle2016ergodic}
Chambolle, A. and Pock, T.
\newblock \href{https://doi.org/10.1007/s10107-015-0957-3}{On the ergodic
  convergence rates of a first-order primal--dual algorithm}.
\newblock \emph{Mathematical Programming}, 2015.

\bibitem[Chambolle \& Pock(2016)Chambolle and Pock]{chambolle2016introduction}
Chambolle, A. and Pock, T.
\newblock \href{https://doi.org/10.1017/S096249291600009X}{An introduction to
  continuous optimization for imaging}.
\newblock \emph{Acta Numerica}, 2016.

\bibitem[Combettes \& Pesquet(2011)Combettes and
  Pesquet]{combettes2011proximal}
Combettes, P.~L. and Pesquet, J.-C.
\newblock \href{https://doi.org/10.1007/978-1-4419-9569-8_10}{Proximal
  splitting methods in signal processing}.
\newblock In \emph{Fixed-point algorithms for inverse problems in science and
  engineering}. Springer, 2011.

\bibitem[Condat(2013{\natexlab{a}})]{condat2013direct}
Condat, L.
\newblock \href{https://doi.org/10.1109/LSP.2013.2278339}{A direct algorithm
  for {1D} total variation denoising}.
\newblock \emph{IEEE Signal Processing Letters}, 2013{\natexlab{a}}.

\bibitem[Condat(2013{\natexlab{b}})]{condat2013primal}
Condat, L.
\newblock \href{https://doi.org/10.1007/s10957-012-0245-9}{A primal--dual
  splitting method for convex optimization involving Lipschitzian, proximable
  and linear composite terms}.
\newblock \emph{Journal of Optimization Theory and Applications},
  2013{\natexlab{b}}.

\bibitem[Conte et~al.(2004)Conte, Foggia, Sansone, and Vento]{conte2004thirty}
Conte, D., Foggia, P., Sansone, C., and Vento, M.
\newblock \href{http://dx.doi.org/10.1142/S0218001404003228}{Thirty years of
  graph matching in pattern recognition}.
\newblock \emph{International journal of pattern recognition and artificial
  intelligence}, 2004.

\bibitem[Davis \& Yin(2015)Davis and Yin]{davis2015three}
Davis, D. and Yin, W.
\newblock \href{https://arxiv.org/abs/1504.01032v1}{A three-operator splitting
  scheme and its optimization applications}.
\newblock \emph{preprint arXiv:1504.01032v1}, 2015.

\bibitem[Davis \& Yin(2017)Davis and Yin]{davis2017three}
Davis, D. and Yin, W.
\newblock \href{https://doi.org/10.1007/s11228-017-0421-z}{A three-operator
  splitting scheme and its optimization applications}.
\newblock \emph{Set-Valued and Variational Analysis}, 2017.

\bibitem[El~Halabi \& Cevher(2015)El~Halabi and Cevher]{el2015totally}
El~Halabi, M. and Cevher, V.
\newblock \href{https://arxiv.org/abs/1411.1990}{A totally unimodular view of
  structured sparsity}.
\newblock In \emph{Proceedings of the 18th International Conference on
  Artificial Intelligence and Statistics (AISTATS)}, 2015.

\bibitem[Fogel et~al.(2013)Fogel, Jenatton, Bach, and
  d'Aspremont]{fogel2013convex}
Fogel, F., Jenatton, R., Bach, F., and d'Aspremont, A.
\newblock \href{https://arxiv.org/abs/1306.4805}{Convex relaxations for
  permutation problems}.
\newblock In \emph{Advances in Neural Information Processing Systems}, 2013.

\bibitem[Gabay \& Mercier(1976)Gabay and Mercier]{gabay1976dual}
Gabay, D. and Mercier, B.
\newblock \href{https://doi.org/10.1016/0898-1221(76)90003-1}{A dual algorithm
  for the solution of nonlinear variational problems via finite element
  approximation}.
\newblock \emph{Computers \& Mathematics with Applications}, 1976.

\bibitem[Gidel et~al.(2017)Gidel, Jebara, and Lacoste-Julien]{gidel17a}
Gidel, G., Jebara, T., and Lacoste-Julien, S.
\newblock
  \href{http://proceedings.mlr.press/v54/gidel17a/gidel17a.pdf}{Frank-Wolfe
  Algorithms for Saddle Point Problems}.
\newblock In \emph{Proceedings of the 20th International Conference on
  Artificial Intelligence and Statistics}, 2017.

\bibitem[Giselsson et~al.(2016)Giselsson, F{\"a}lt, and
  Boyd]{giselsson2016line}
Giselsson, P., F{\"a}lt, M., and Boyd, S.
\newblock \href{https://arxiv.org/pdf/1603.06772.pdf}{Line search for averaged
  operator iteration}.
\newblock In \emph{IEEE 55th Conference on Decision and Control (CDC)}. IEEE,
  2016.

\bibitem[Glowinski \& Marroco(1975)Glowinski and
  Marroco]{glowinski1975approximation}
Glowinski, R. and Marroco, A.
\newblock \href{http://www.numdam.org/article/M2AN_1975__9_2_41_0.pdf}{Sur
  l'approximation, par {\'e}l{\'e}ments finis d'ordre un, et la r{\'e}solution,
  par p{\'e}nalisation-dualit{\'e} d'une classe de probl{\`e}mes de {D}irichlet
  non lin{\'e}aires}.
\newblock \emph{Revue fran{\c{c}}aise d'automatique, informatique, recherche
  op{\'e}rationnelle}, 1975.

\bibitem[Gramfort et~al.(2013)Gramfort, Thirion, and
  Varoquaux]{gramfort2013identifying}
Gramfort, A., Thirion, B., and Varoquaux, G.
\newblock \href{https://doi.org/10.1109/PRNI.2013.14}{Identifying predictive
  regions from {fMRI} with {TV-L1} prior}.
\newblock In \emph{International Workshop on Pattern Recognition in
  Neuroimaging}. IEEE, 2013.

\bibitem[Hegde et~al.(2009)Hegde, Duarte, and Cevher]{hegde2009compressive}
Hegde, C., Duarte, M.~F., and Cevher, V.
\newblock \href{https://hal.inria.fr/inria-00369584/}{Compressive sensing
  recovery of spike trains using a structured sparsity model}.
\newblock In \emph{SPARS'09-Signal Processing with Adaptive Sparse Structured
  Representations}, 2009.

\bibitem[Hiriart-Urruty \& Lemar{\'e}chal(1993)Hiriart-Urruty and
  Lemar{\'e}chal]{hiriart2013convex}
Hiriart-Urruty, J.-B. and Lemar{\'e}chal, C.
\newblock \emph{\href{http://dx.doi.org/10.1007/978-3-662-02796-7}{Convex
  analysis and minimization algorithms I: Fundamentals}}.
\newblock Springer science \& business media, 1993.

\bibitem[Iusem(1998)]{Iusem1998}
Iusem, A.~N.
\newblock \href{https://doi.org/10.1023/A:1022670114963}{On Some Properties of
  Generalized Proximal Point Methods for Variational Inequalities}.
\newblock \emph{Journal of Optimization Theory and Applications}, 1998.

\bibitem[Jacob et~al.(2009)Jacob, Obozinski, and Vert]{jacob2009group}
Jacob, L., Obozinski, G., and Vert, J.-P.
\newblock
  \href{http://citeseerx.ist.psu.edu/viewdoc/summary?doi=10.1.1.329.9684}{Group
  lasso with overlap and graph lasso}.
\newblock In \emph{Proceedings of the 26th annual international conference on
  machine learning}. ACM, 2009.

\bibitem[Johnson(2013)]{johnson2013dynamic}
Johnson, N.
\newblock \href{http://dx.doi.org/10.1080/10618600.2012.681238}{A dynamic
  programming algorithm for the fused lasso and {$L_0$}-segmentation}.
\newblock \emph{Journal of Computational and Graphical Statistics}, 2013.

\bibitem[Kim et~al.(2009)Kim, Koh, Boyd, and Gorinevsky]{kim2009ell1}
Kim, S.-J., Koh, K., Boyd, S., and Gorinevsky, D.
\newblock \href{https://doi.org/10.1137/070690274}{$\ell_1$ trend filtering}.
\newblock \emph{SIAM review}, 2009.

\bibitem[Lawler(1963)]{lawler1963quadratic}
Lawler, E.~L.
\newblock \href{http://dx.doi.org/10.1287/mnsc.9.4.586}{The quadratic
  assignment problem}.
\newblock \emph{Management science}, 1963.

\bibitem[Lewis et~al.(2004)Lewis, Yang, Rose, and Li]{lewis2004rcv1}
Lewis, D.~D., Yang, Y., Rose, T.~G., and Li, F.
\newblock \href{http://www.jmlr.org/papers/volume5/lewis04a/lewis04a.pdf}{RCV1:
  A new benchmark collection for text categorization research}.
\newblock \emph{Journal of machine learning research}, 2004.

\bibitem[Lions \& Mercier(1979)Lions and Mercier]{lions1979splitting}
Lions, P.-L. and Mercier, B.
\newblock \href{http://epubs.siam.org/doi/abs/10.1137/0716071}{Splitting
  algorithms for the sum of two nonlinear operators}.
\newblock \emph{SIAM Journal on Numerical Analysis}, 1979.

\bibitem[Lu et~al.(2016)Lu, Huang, and Liu]{lu2016fast}
Lu, Y., Huang, K., and Liu, C.-L.
\newblock \href{https://arxiv.org/abs/1207.1114}{A fast projected fixed-point
  algorithm for large graph matching}.
\newblock \emph{Pattern Recognition}, 2016.

\bibitem[Malitsky \& Pock(2018)Malitsky and Pock]{malitsky2016first}
Malitsky, Y. and Pock, T.
\newblock \href{https://arxiv.org/abs/1608.08883}{A first-order primal-dual
  algorithm with linesearch}.
\newblock \emph{SIAM Journal on Optimization}, 2018.

\bibitem[Nesterov et~al.(2013)]{nesterov2013gradient}
Nesterov, Y. et~al.
\newblock \href{https://doi.org/10.1007/s10107-012-0629-5}{Gradient methods for
  minimizing composite objective function}.
\newblock \emph{Mathematical Programming}, 2013.

\bibitem[Nocedal \& Wright(2006)Nocedal and Wright]{nocedal2006numerical}
Nocedal, J. and Wright, S.
\newblock \emph{\href{http://dx.doi.org/10.1007/978-0-387-40065-5}{Numerical
  optimization}}.
\newblock Springer Science \& Business Media, 2006.

\bibitem[Parikh \& Boyd(2013)Parikh and Boyd]{parikh2013proximal}
Parikh, N. and Boyd, S.
\newblock \href{http://www.nowpublishers.com/article/Details/OPT-003}{Proximal
  algorithms}.
\newblock \emph{Foundations and Trends in optimization}, 2013.

\bibitem[Pedregosa(2018)]{fabian_pedregosa_2017_437991}
Pedregosa, F.
\newblock \href{http://openopt.github.io/copt/}{C-OPT: composite optimization
  in Python}.
\newblock 2018.
\newblock \doi{10.5281/zenodo.1283339}.
\newblock URL \url{http://openopt.github.io/copt/}.

\bibitem[Raguet et~al.(2013)Raguet, Fadili, and
  Peyr{\'e}]{raguet2013generalized}
Raguet, H., Fadili, J., and Peyr{\'e}, G.
\newblock \href{https://doi.org/10.1137/120872802}{A generalized
  forward-backward splitting}.
\newblock \emph{SIAM Journal on Imaging Sciences}, 2013.

\bibitem[Richard et~al.(2012)Richard, Savalle, and
  Vayatis]{richard2012estimation}
Richard, E., Savalle, P.-a., and Vayatis, N.
\newblock \href{https://arxiv.org/abs/1206.6474}{Estimation of Simultaneously
  Sparse and Low Rank Matrices}.
\newblock In \emph{Proceedings of the 29th International Conference on Machine
  Learning}, 2012.

\bibitem[Richt{\'a}rik \& Tak{\'a}{\v{c}}(2014)Richt{\'a}rik and
  Tak{\'a}{\v{c}}]{richtarik2014iteration}
Richt{\'a}rik, P. and Tak{\'a}{\v{c}}, M.
\newblock \href{https://doi.org/10.1007/s10107-012-0614-z}{Iteration complexity
  of randomized block-coordinate descent methods for minimizing a composite
  function}.
\newblock \emph{Mathematical Programming}, 2014.

\bibitem[Rockafellar(1997)]{rockafellar1997convex}
Rockafellar, R.~T.
\newblock Convex analysis, 1997.

\bibitem[Rockafellar \& Wets(1998)Rockafellar and
  Wets]{rockafellar1998variational}
Rockafellar, R.~T. and Wets, R. J.-B.
\newblock \emph{\href{10.1007/978-3-642-02431-3}{Variational analysis}}.
\newblock Springer Science \& Business Media, 1998.

\bibitem[Rudin et~al.(1992)Rudin, Osher, and Fatemi]{rudin1992nonlinear}
Rudin, L.~I., Osher, S., and Fatemi, E.
\newblock \href{https://doi.org/10.1016/0167-2789(92)90242-F}{Nonlinear total
  variation based noise removal algorithms}.
\newblock \emph{Physica D: Nonlinear Phenomena}, 1992.

\bibitem[Tibshirani et~al.(2011)Tibshirani, Hoefling, and
  Tibshirani]{tibshirani2011nearly}
Tibshirani, R.~J., Hoefling, H., and Tibshirani, R.
\newblock \href{https://doi.org/10.1198/TECH.2010.10111}{Nearly-isotonic
  regression}.
\newblock \emph{Technometrics}, 2011.

\bibitem[V{\~u}(2013)]{vu2013splitting}
V{\~u}, B.~C.
\newblock \href{https://doi.org/10.1007/s10444-011-9254-8}{A splitting
  algorithm for dual monotone inclusions involving cocoercive operators}.
\newblock \emph{Advances in Computational Mathematics}, 2013.

\bibitem[Yan(2018)]{yan2018new}
Yan, M.
\newblock \href{https://doi.org/10.1007/s10915-018-0680-3}{A New Primal--Dual
  Algorithm for Minimizing the Sum of Three Functions with a Linear Operator}.
\newblock \emph{Journal of Scientific Computing}, 2018.

\bibitem[Yuan \& Lin(2006)Yuan and Lin]{yuan2006model}
Yuan, M. and Lin, Y.
\newblock \href{http://dx.doi.org/10.1111/j.1467-9868.2005.00532.x}{Model
  selection and estimation in regression with grouped variables}.
\newblock \emph{Journal of the Royal Statistical Society: Series B (Statistical
  Methodology)}, 2006.

\bibitem[Yuan \& Lin(2007)Yuan and Lin]{yuan2007non}
Yuan, M. and Lin, Y.
\newblock \href{http://dx.doi.org/10.1111/j.1467-9868.2007.00581.x}{On the
  non-negative garrotte estimator}.
\newblock \emph{Journal of the Royal Statistical Society: Series B (Statistical
  Methodology)}, 2007.

\end{thebibliography}

\clearpage
\appendix
\onecolumn
\titleformat{\section}{\Large\bfseries}{\thesection}{1em}{}
\titleformat{\subsection}{\large\bfseries}{\thesubsection}{1em}{}
\gdef\thesection{Appendix \Alph{section}}

\fontsize{11pt}{12pt}\selectfont

{\centering{\LARGE\bfseries Adaptive Three Operator Splitting}

\vspace{1em}
\centering{{\LARGE\bfseries Supplementary material}}

}
\vspace{2em}

\section{Basic definitions and properties}\label{apx:basic_definitions}

\vskip 1em

\begin{definition}[proper function] A function $f: \mathcal{X} \subseteq \RR^p \to ]-\infty, \infty]$ is said to be proper if its domain not empty.
\end{definition}

\vskip 0.5em

\begin{definition}[Fenchel conjugate] The Fenchel conjugate of a function $f: \mathcal{X} \subseteq \RR^p \to ]-\infty, \infty]$ is defined as
\begin{equation}
f^*(\xx^\star) = \sup_{ \xx \in \mathcal{X}} \, \langle \xx^{\star} , \xx \rangle - f ( \xx ) ~.
\end{equation}
\end{definition}

\vskip 0.5em

\begin{definition}[lower semicontinuity] We say that a proper convex function $f$ is lower-semicontinuous if all of its levelsets ${\{\xx \in \dom(f)\,|\,f(\xx) \leq \alpha \}}$ are closed.
\end{definition}

\vskip 0.5em

\begin{definition}[strong convexity]\label{definition:strong_convexity}
  A function $f$ is said $\mu$-strongly convex if it verifies the following inequality for all $\xx, \yy$ in the domain and any $\uu \in \partial f(\xx)$
  \begin{equation}
    f(\xx) \leq f(\yy) + \langle \uu, \xx - \yy \rangle - \frac{\mu}{2}\|\xx - \yy\|^2
  \end{equation}
\end{definition}

\vskip 0.5em

\begin{definition}[relative interior] The relative interior of a convex set $C \subseteq \RR^p$ is defined as
\begin{equation}
\operatorname{relint}(C) = \{\xx \in C : \forall {\yy \in C} \; \exists {\lambda > 1}: \lambda \xx + (1-\lambda)\yy \in C\}
\end{equation}
\end{definition}

\vskip 0.5em

\begin{lemma}[subgradient characterization of proximal operator]\label{lemma:prox_characterization}
Let $g$ be a convex proper lower semicontinuous function. Then for any $
\xx$ in its domain and any $\gamma > 0$ we have the following characterization of proximal operator:
\begin{equation}
\zz = \prox_{\gamma g}(\xx) \iff \frac{1}{\gamma}(\xx - \zz) \in \partial g(\zz)
\end{equation}
\end{lemma}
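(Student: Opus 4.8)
The plan is to identify $\prox_{\gamma g}(\xx)$ with the unique minimizer of the strongly convex objective that defines the proximal operator, and then apply Fermat's optimality rule. Concretely, I would set $F(\zz) \defas g(\zz) + \frac{1}{2\gamma}\|\xx - \zz\|^2$, so that by definition $\prox_{\gamma g}(\xx) = \argmin_{\zz \in \RR^p} F(\zz)$. Since $g$ is proper, convex and lower semicontinuous and the quadratic term is continuous and $\frac{1}{\gamma}$-strongly convex, the sum $F$ is proper, lower semicontinuous and strongly convex; it is therefore coercive and admits a unique minimizer, so that the proximal operator is well-defined and single-valued. This preliminary observation is what makes the left-hand side of the equivalence meaningful.

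First I would invoke Fermat's rule for convex functions: a point $\zz$ minimizes the convex function $F$ over $\RR^p$ if and only if $\boldsymbol{0} \in \partial F(\zz)$. This reduces the claimed equivalence to the computation of $\partial F$, converting a statement about the $\argmin$ into a statement about subgradients.

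Next I would split the subdifferential using the sum rule. Writing $q(\zz) \defas \frac{1}{2\gamma}\|\xx - \zz\|^2$, the function $q$ is differentiable everywhere with $\nabla q(\zz) = \frac{1}{\gamma}(\zz - \xx)$. Because $q$ is finite and continuous on all of $\RR^p$, the Moreau--Rockafellar qualification condition for the sum rule holds automatically, giving $\partial F(\zz) = \partial g(\zz) + \nabla q(\zz) = \partial g(\zz) + \frac{1}{\gamma}(\zz - \xx)$. Substituting this into the optimality condition yields $\boldsymbol{0} \in \partial g(\zz) + \frac{1}{\gamma}(\zz - \xx)$, which rearranges to $\frac{1}{\gamma}(\xx - \zz) \in \partial g(\zz)$, as claimed; the reverse implication follows by reading the same chain of equivalences backwards.

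The only genuinely delicate point is the validity of the subdifferential sum rule, which in general requires a constraint-qualification hypothesis on the relative interiors of the domains. Here it is immediate because one of the summands, namely $q$, is everywhere finite and smooth, so no additional assumption beyond the stated regularity of $g$ is needed. The remaining steps --- existence and uniqueness of the minimizer from strong convexity and coercivity, and the elementary rearrangement of the inclusion --- are routine and require no further hypotheses.
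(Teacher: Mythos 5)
Your proposal is correct and follows essentially the same route as the paper's proof: identify $\prox_{\gamma g}(\xx)$ with the minimizer of $g(\zz) + \frac{1}{2\gamma}\|\xx-\zz\|^2$, apply the first-order optimality condition together with the subdifferential sum rule, and rearrange the resulting inclusion. Your additional remarks on well-posedness of the minimizer and on why the sum rule needs no extra qualification (one summand being everywhere finite and smooth) are sound refinements of the same argument.
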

\begin{proof}
By the definition of proximal operator we have that $\zz = \prox_{\gamma g}(\xx)$ is equivalent to
\begin{align}
&\zz \in \argmin_{\zz'} g(\zz') + \frac{1}{2\gamma}\| \zz' - \xx\|^2\\
&\iff 0 \in \partial g(\zz) + \frac{1}{\gamma}(\zz - \xx)\\
&\iff \frac{1}{\gamma}(\xx - \zz) \in \partial g(\zz)
\end{align}
where the first equivalence is a consequence of the first order optimality conditions.
\end{proof}

\begin{lemma}[conjugate-inverse identity]\label{lemma:conjugate_inverse}
Let $h$ be a convex, proper lower semicontinuous function. Then
\begin{equation}
\uu \in \partial h(\zz) \iff \zz \in \partial h^*(\uu)~.
\end{equation}
In other words, $(\partial h)^{-1} = \partial h^*$.
\end{lemma}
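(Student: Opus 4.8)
The plan is to prove both directions simultaneously by reducing each subdifferential membership to a single Fenchel--Young equality, and then invoking the biconjugate theorem to identify the two resulting conditions. First I would record the Fenchel--Young inequality: for any $\uu, \zz$ in the appropriate spaces one has $h(\zz) + h^*(\uu) \geq \langle \uu, \zz \rangle$, together with the crucial sharpening that $\uu \in \partial h(\zz)$ holds \emph{if and only if} this inequality is tight, i.e. $h(\zz) + h^*(\uu) = \langle \uu, \zz \rangle$. This follows by unpacking definitions: $\uu \in \partial h(\zz)$ means $h(\yy) \geq h(\zz) + \langle \uu, \yy - \zz \rangle$ for all $\yy$, which rearranges to $\langle \uu, \zz \rangle - h(\zz) \geq \langle \uu, \yy \rangle - h(\yy)$ for every $\yy$, i.e. $\zz$ attains the supremum defining $h^*(\uu)$ and so $h^*(\uu) = \langle \uu, \zz \rangle - h(\zz)$.

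Next, applying the exact same reasoning to the conjugate function $h^*$ (which is itself convex, proper, and lower semicontinuous), the condition $\zz \in \partial h^*(\uu)$ is equivalent to the equality $h^*(\uu) + h^{**}(\zz) = \langle \uu, \zz \rangle$. The two characterizations then coincide provided $h^{**} = h$. This is precisely the content of the Fenchel--Moreau biconjugate theorem, which guarantees $h^{**} = h$ exactly when $h$ is proper, convex, and lower semicontinuous --- the hypotheses of the lemma. Substituting $h^{**} = h$ into the second equality yields $h^*(\uu) + h(\zz) = \langle \uu, \zz \rangle$, which is identical to the equality characterizing $\uu \in \partial h(\zz)$. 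Hence both memberships are equivalent to the same Fenchel--Young equality, establishing the claimed bijection $(\partial h)^{-1} = \partial h^*$.

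The main obstacle, and the only place a nontrivial result is invoked, is the biconjugate identity $h^{**} = h$; this is where lower semicontinuity is indispensable, since without it one only obtains $h^{**} \leq h$ in general and the reverse subdifferential membership can fail. Everything else reduces to straightforward manipulation of the definitions of the subdifferential and the Fenchel conjugate, so I would keep those steps brief.
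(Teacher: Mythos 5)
Your proof is correct: the reduction of both subdifferential memberships to the Fenchel--Young equality, combined with the Fenchel--Moreau biconjugate theorem $h^{**}=h$ (which is exactly where properness, convexity, and lower semicontinuity are used), is the standard argument for this identity. The paper does not prove the lemma itself but defers to \citet[Corollary 16.30]{bauschke2017convex} and \citet[Proposition 11.3]{rockafellar1998variational}, whose proofs proceed along essentially the same lines as yours, so there is nothing to add.
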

\begin{proof}
See e.g. \citep[Corollary 16.30]{bauschke2017convex} or \citep[Proposition 11.3]{rockafellar1998variational}.
\end{proof}

\begin{lemma}[Improved Young's inequality] \label{lemma:young}
For all $\boldsymbol a, \boldsymbol b, \boldsymbol c$ we have,
\begin{equation}
    \|\boldsymbol a+\boldsymbol b+ \boldsymbol c\|^2 \leq 3(\|\boldsymbol a\|^2+ \|\boldsymbol b\|^2 + \|\boldsymbol c\|^2)
\end{equation}
\end{lemma}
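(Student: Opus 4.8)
The plan is to reduce the inequality to the elementary fact that the cross terms in an expanded square are controlled by the diagonal terms. Since the statement is symmetric in $\boldsymbol a, \boldsymbol b, \boldsymbol c$ and purely algebraic, no convexity or duality machinery is strictly needed; the whole argument lives in a single inner-product space.

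First I would expand the left-hand side using bilinearity of the inner product:
\[
\|\boldsymbol a + \boldsymbol b + \boldsymbol c\|^2 = \|\boldsymbol a\|^2 + \|\boldsymbol b\|^2 + \|\boldsymbol c\|^2 + 2\langle \boldsymbol a, \boldsymbol b\rangle + 2\langle \boldsymbol a, \boldsymbol c\rangle + 2\langle \boldsymbol b, \boldsymbol c\rangle.
\]
Next I would bound each cross term by the corresponding diagonal terms via the inequality $2\langle \boldsymbol x, \boldsymbol y\rangle \le \|\boldsymbol x\|^2 + \|\boldsymbol y\|^2$, which is just a rearrangement of $\|\boldsymbol x - \boldsymbol y\|^2 \ge 0$. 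Applying this to the three pairs $(\boldsymbol a, \boldsymbol b)$, $(\boldsymbol a, \boldsymbol c)$, $(\boldsymbol b, \boldsymbol c)$ and summing shows that the three cross terms together are at most $2(\|\boldsymbol a\|^2 + \|\boldsymbol b\|^2 + \|\boldsymbol c\|^2)$. Adding back the three diagonal terms then gives exactly the claimed factor of $3$.

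As an even shorter alternative I would invoke convexity of the squared norm: Jensen's inequality applied to $\boldsymbol a, \boldsymbol b, \boldsymbol c$ with equal weights $\tfrac13$ yields $\big\|\tfrac{\boldsymbol a + \boldsymbol b + \boldsymbol c}{3}\big\|^2 \le \tfrac13(\|\boldsymbol a\|^2 + \|\boldsymbol b\|^2 + \|\boldsymbol c\|^2)$, and multiplying through by $9$ produces the result directly. I would present the expansion argument as the primary proof, since it is completely self-contained, and perhaps mention the convexity route as a one-line remark.

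There is essentially no genuine obstacle here: the only point requiring care is applying the pairwise bound in the correct direction and tracking the constant, so that the three cross terms contribute a factor $2$ rather than some larger multiple, which is what makes the final constant exactly $3$. The lemma is stated in this generality precisely so that it can later be used to split the squared norm of a three-term sum into a sum of three individually controllable squares, as needed in the telescoping estimates behind the convergence rates.
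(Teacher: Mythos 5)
Your primary argument -- expanding the square and bounding each of the three cross terms by $2\langle \boldsymbol x, \boldsymbol y\rangle \le \|\boldsymbol x\|^2 + \|\boldsymbol y\|^2$ -- is correct and is exactly the proof given in the paper. The Jensen remark is a fine aside but adds nothing the paper needs.
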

\proof We develop the squared norm to get,
\begin{align}
    \|\boldsymbol a+\boldsymbol b+ \boldsymbol c\|^2
    & = \|\boldsymbol a\|^2 + 2 \langle \boldsymbol a, \boldsymbol b \rangle +  \|\boldsymbol b\|^2 + 2 \langle \boldsymbol b, \boldsymbol c \rangle + \|\boldsymbol c\|^2 + 2 \langle \boldsymbol a, \boldsymbol c \rangle \\
    & \leq \|\boldsymbol a\|^2 + \|\boldsymbol a\|^2 + \|\boldsymbol b\|^2 +  \|\boldsymbol b\|^2 + \|\boldsymbol b\|^2 + \|\boldsymbol c\|^2 + \|\boldsymbol c\|^2 + \|\boldsymbol a\|^2 + \|\boldsymbol c\|^2 \\
    & \qquad (\text{Young's inequality on } 2 \langle \boldsymbol a, \boldsymbol b \rangle, 2 \langle \boldsymbol b, \boldsymbol c \rangle \text{ and } 2 \langle \boldsymbol a, \boldsymbol c \rangle) \nonumber \\
    & = 3(\|\boldsymbol a\|^2+ \|\boldsymbol b\|^2 + \|\boldsymbol c\|^2)
\end{align}
\endproof
%

\clearpage
\section{Adaptive (k+1) operator splitting}\label{apx:k_operator_splitting}

For completeness we write below the full adaptive $(k+1)$ operator splitting, which is merely Algorithm~\ref{alg:algo_three_ls} applied to the reformulation of \eqref{eq:obj_fun_k} in \S\ref{scs:extension_k_terms}. It hence solves an optimization problem of the form
\begin{equation}
  \minimize_{\xx \in \RR^p}\,\vphantom{\sum_i^n} \varphi(\xx) + \textstyle\sum_{j=1}^k h_j(\xx) ~,
\end{equation}
where $\varphi$ is $L_f$-smooth and each $h_j$ is proximal.

In this case we name the quadratic used in the sufficient decrease condition $\widetilde{Q}_t$, and is defined as
\begin{align}
  \widetilde Q_t(\xx) \defas \varphi(\overline\ZZ_t) + \langle \nabla \varphi(\overline\ZZ_t), \xx - \overline\ZZ_t\rangle + \frac{1}{2\gamma_t}\|\xx \mathbf{1}_k^T - \ZZ_t\|_F^2~,
\end{align}
where $\|\cdot\|_F$ denotes the Frobenius norm and $\mathbf{1}_k$ is the $k$-dimensional vector of ones.

\begin{algorithm}[H]
  \caption{Algo 1}
  \KwIn{$\zz_0 \in \RR^p$, $\uu_0 \in \RR^p$, $\gamma_0 > 0, \tau \in (0, 1)$}
  $\UU_0 = \uu_0 \boldsymbol{1}_k^T$

  $\ZZ_0 = \zz_0 \boldsymbol{1}_k^T$

  \For{$t=1, \ldots, T$ }{

  \Repeat(\hfill $\triangleright$ step-size search loop){

  $\rr_t = \frac{1}{k}\nabla \varphi(\zz_t)$

  $\xx_{t+1} = \overline{\ZZ}_t - \gamma_t \overline\UU_t - \gamma_t \rr_t$

  $\delta_t = \widetilde{Q}_t(\xx_{t+1}) - \varphi(\xx_{t+1})$

      \eIf{$\delta_t \geq 0 \,$\label{alg:ls_k_2}}{
      {\bfseries break}\hfill $\triangleright$ sufficient decrease verified
      }{
      $\gamma_t = \tau \gamma_t$\hfill $\triangleright$ decrease step-size
      }
  }

  \For{$j=1, \ldots, k\,$}{

  $\ZZ_{j, t+1} = \prox_{\gamma h_j}({\xx}_{t+1} + \gamma_t {\UU}_{j, t})$

  $\UU_{j, t+1} = \UU_{j, t} + (\xx_{t+1} - \zz_{t+1})/ \gamma_t$
  }

  \Variantone{

  $\gamma_{t+1} = \gamma_{t}$\label{line:update_gamma_v1_2}
  }

  \Varianttwo(\hfill $\triangleright$ only if each $h_j$ is $\beta_h$-Lipschitz){

  Choose any $\gamma_{t+1} \in [\gamma_t, \sqrt{\gamma_{t}^2 +  \gamma_t \delta_t (2\beta_h)^{-2}}]$\label{line:increase_ls_2}
  }

  }
  \Return{${\xx}_{t+1}$, ${\UU}_{t+1}$}
\caption{Adaptive $(k+1)$-Operator Splitting}\label{alg:algo_three_ls_k_2}
\end{algorithm}

\paragraph{Discussion.}
The above algorithm requires the storage of a vector of size $p$ and two matrices of size $k \times p$. Hence, for three operators ($k=2$) this requires to store $2 p$ more elements than the three previous formulation, which only requires to store three arrays of size $p$. Hence, for $k=2$, the formulation of Algorithm~\ref{alg:algo_three_ls} should be preferred. Of course, for $k > 2$, it is necessary to use this formulation.

\clearpage

\section{Analysis}\label{apx:analysis}

In this section we provide the proofs for the theorems stated in the Analysis section without proof. The appendix is organized as follows:
\begin{itemize}
  \item In \ref{apx:fixed_point_charac} we prove the fixed point characterization of our three operator splitting variant. Its proof is mostly independent of the other results.
  \item In \ref{apx:key_recursive} we prove an inequality that relates the saddle point suboptimality to the current and previous iterates. This inequality forms the core of both linear and sublinear convergence proofs. Because of its importance we name it ``Key recursive inequality''.
  \item Finally, in \ref{apx:sublinear_convergence} and  \ref{apx:linear_convergence} we make use of the previous ``key recursive inequality'' to prove the sublinear  (Theorem \ref{thm:sublinear} and Corollary \ref{cor:sublinear_convergence}) and linear (Theorem \ref{thm:linear_convergence}) convergence results.
\end{itemize}

Throughout this section we assume that Assumptions 1 and 2 of \S\ref{scs:analysis} are verified without explicit mention.

\subsection{Fixed point characterization}\label{apx:fixed_point_charac}

In this subsection we provide a proof for Theorem~\ref{thm:fixed_point}.
\citet[Lemma 2.2]{davis2017three} proved a weaker result that characterized only the first coordinate: in our notation they proved that if $(\xx, \uu) \in\Fix(\boldsymbol{T}_\gamma)$, then $\xx$ is a primal solution. Our theorem extends this results into a full characterization of the operator.

\begin{customtheorem}{\ref{thm:fixed_point}}
Let $\mathcal{P}^\star$ denote the set of minimizers of the primal objective~\eqref{eq:primal_loss} and $\mathcal{D}^\star$ the set of minimizers of the dual objective~\eqref{eq:dual_loss}\,. Then the set of fixed points of $\TT_\gamma$ is given by
\begin{equation}
\Fix(\boldsymbol{T}_{\gamma}) = \mathcal{P}^\star \times \mathcal{D}^\star~.
\end{equation}
\end{customtheorem}

\begin{proof}
We will find it useful to first characterize the fixed points of $\boldsymbol{T}_{\gamma}$ by a subdifferential inclusion. Consider the following set of equivalences

\begin{align}
 (\zz, \uu) \in \Fix(\TT_\gamma) &\iff (\zz, \uu) = \TT_\gamma(\zz, \uu)\label{eq:fixed_point_initial}\\
    &\iff \begin{cases}
    \zz = \prox_{\gamma h}(\xx(\zz, \uu) + \gamma \uu)\\
    \uu = \uu + (\xx(\zz, \uu) - \zz)/ \gamma \\
    \text{ with } \xx(\zz, \uu) = \prox_{\gamma g}(\zz - \gamma(\uu + \nabla f(\zz)))
    \end{cases}
    &\text{ (by definition of $\boldsymbol{T}_{\gamma}$)}\\
    &\iff \begin{cases}
    \zz = \prox_{\gamma g}(\zz - \gamma(\uu + \nabla f(\zz)))\\
    \zz = \prox_{\gamma h}(\zz + \gamma \uu)\\
    \end{cases}
    &\text{ ($\zz = \xx(\zz, \uu)$ by second equation)}\\
  &\iff \begin{cases} - \uu - \nabla f(\zz) \in \partial g(\zz) \label{eq:t13}\\
  \uu \in \partial h(\zz)\end{cases}&\text{ (by Lemma~\ref{lemma:prox_characterization})}\\
  &\iff \begin{cases} - \uu  \in \partial (f + g)(\zz) \\
  \uu \in \partial h(\zz)\end{cases}&\label{eq:t11}\\
  &\iff \begin{cases} \zz \in  \partial (f + g)^*(-\uu)\\
  \zz \in \partial h^*(\uu )
  \end{cases}& \text{(by Lemma~\ref{lemma:conjugate_inverse})}\label{eq:t15}
\end{align}

The rest of the proof is divided two parts, proving in the first part $\Fix(\boldsymbol{T}_{\gamma}) \subseteq {\mathcal{P}^\star \times \mathcal{D}^\star}\,$, and the reverse inclusion in the second part.

\emph{Part 1}. Our goal is to prove $\Fix(\boldsymbol{T}_{\gamma}) \subseteq {\mathcal{P}^\star \times \mathcal{D}^\star}\,$. Let $(\zz, \uu) \in\Fix(\boldsymbol{T}_{\gamma})$.
Adding together the equations in \eqref{eq:t11} one obtains
\begin{equation}\label{eq:first_order_optimality_primal}
0 \in   \partial h(\zz) + \partial g(\zz) + \nabla f(\zz)~,
\end{equation}
and so $\zz$ is a minimizer of the primal objective. Subtracting the equations in~\eqref{eq:t15} we also have
\begin{equation}\label{eq:first_order_optimality_dual}
0 \in  \partial h^*(\balpha) - \partial(f + g)^{*}(-\balpha)~,
\end{equation}
and so $\uu$ is a minimizer of the dual objective. We have proved $\Fix(\boldsymbol{T}_{\gamma}) \subseteq \mathcal{P}^\star \times \mathcal{D}^\star\,$.

\vspace{0.5em}\emph{Part 2}. Our goal now is to prove the inverse inclusion, ${\mathcal{P}^\star \times \mathcal{D}^\star} \subseteq \Fix(\boldsymbol{T}_{\gamma})$. Let $(\zz, \balpha) \in \mathcal{P}^\star \times \mathcal{D}^\star$, we will prove that $(\zz, \balpha)$ is a fixed point of $\TT_\gamma$.

We start by recalling the notion of \emph{paramonotonicity}, which will play a key role in this proof. This notion was introduced by \citet{Iusem1998} and is key to characterizing the set of fixed points of related methods, such as the Douglas-Rachford splitting~\citep{bauschke2012attouch}. An operator $\CC$ is said to be paramonotonic if the following implication is verified
\begin{equation}
\begin{rcases*}
\widetilde{\boldsymbol{a}} \in \CC \boldsymbol{a}\\
\widetilde{\boldsymbol{b}} \in \CC \boldsymbol{b}\\
\langle \widetilde{\boldsymbol{a}} - \widetilde{\boldsymbol{b}}, \boldsymbol{a} - \boldsymbol{b}\rangle = 0
\end{rcases*} \implies \widetilde{\boldsymbol{a}} \in \CC \boldsymbol{b} \text{ and } \widetilde{\boldsymbol{b}} \in \CC \boldsymbol{a}~.
\end{equation}

It is known that the subdifferential of a convex proper lower semicontinuous function is paramonotonic~\citep[Proposition 2.2]{Iusem1998}. Hence we have that $\partial h$ and $\partial (f + g)$ are paramonotonic.

By the first-order optimality conditions on the primal and dual loss we have that there exists elements $\uu_\zz$ and $\zz_\uu$ such that
\begin{gather}
\uu_\zz \in \partial h(\zz) \cap (-\partial (f + g)(\zz))\label{eq:u_z_inclusion}\\
\zz_\uu \in \partial h^*(\uu) \cap (\partial (f + g)^*(-\uu))~,
\end{gather}
where the second inclusion can be written equivalently using the conjugate-inverse identity (Lemma~\ref{lemma:conjugate_inverse}) as
\begin{equation}
\uu \in \partial h(\zz_\uu) \cap (- \partial (f + g)(\zz_\uu))~.\label{eq:u_inclusion}
\end{equation}
Using Eq.~\eqref{eq:u_z_inclusion} and \eqref{eq:u_inclusion} we have by monotony of $\partial h$ and $\partial (f+g)$
\begin{equation}
\langle \uu_\zz - \uu, \zz - \zz_\uu \rangle\geq 0 ~\text{ and } \langle \uu_\zz - \uu, \zz - \zz_\uu \rangle\leq 0
\end{equation}
from where we necessarily have $\langle \uu_\zz - \uu, \zz - \zz_\uu \rangle = 0$. We hence have by paramonotonicity of $\partial h$
\begin{equation}
\begin{rcases}
\uu_\zz \in \partial h(\zz)\\
\uu \in \partial h(\zz_\uu)\\
\langle \uu_\zz - \uu, \zz - \zz_\uu\rangle = 0
\end{rcases} \implies \uu \in \partial h(\zz)
\end{equation}
Similarly, by paramonotonicity of $\partial (f+g)$ we have
\begin{equation}
\begin{rcases}
-\uu_\zz \in \partial (f+g)(\zz)\\
-\uu \in \partial (f+g)(\zz_\uu)\\
\langle \uu_\zz - \uu, \zz - \zz_\uu\rangle = 0
\end{rcases} \implies -\uu \in \partial (f + g)(\zz)
\end{equation}
Combining the last two equations we have by the definition of $\yy$ the following inclusions
\begin{equation}
\begin{cases}-\uu \in \partial (f+g)(\zz)\\
\uu \in h(\zz)~.\end{cases}
\end{equation}
These are the same subdifferential inclusions of \eqref{eq:t11}, which are equivalent to $(\zz, \uu) \in \Fix(\boldsymbol{T}_{\gamma})$ (Eq.~\eqref{eq:fixed_point_initial}). This concludes the proof.
\end{proof}

\vspace{1em}


\vspace{2em}

\subsection{Key recursive inequality}\label{apx:key_recursive}

In this subsection we present a lemma that will be key for further proofs, as it relates the saddle point suboptimality to the current and previous iterates.

\begin{lemma}[Key recursive inequality]\label{lemma:key_recursive}
Let $f$ be $\mu_f$-strongly convex and $h^*$ be $\mu_h$-strongly convex (where we allow $\mu_f = 0$ and/or $\mu_h = 0$). Then after $t$ iterations of Algorithm~\ref{alg:algo_three_ls} we have the following inequality for all $(\xx, \balpha)$ in the domain of $\mathcal{L}$, with $\gamma_{-1} = \gamma_0\,,~\delta_{-1} = 0$.

For Variant 1:
\begin{equation}\label{eq:recursive_ineq_1}
2\gamma_t(\mathcal{L}(\xx_{t+1}, \balpha) - \mathcal{L}(\xx, \uu_{t+1})) +  \|\zz_{t+1} - \xx\|^2 +  (1 - \sigma)^{-1}\|\gamma_{t}(\uu_{t+1} - \uu)\|^2\leq (1 - \rho)\|\zz_{t} - \xx\|^2  + \|\gamma_{t-1}(\uu_{t}\!-\!\uu)\|^2~.
\end{equation}

For Variant 2:
\begin{equation}\label{eq:recursive_ineq_2}
\begin{aligned}
2\gamma_t(\mathcal{L}(\xx_{t+1}, \balpha) - \mathcal{L}(\xx, \uu_{t+1}))& +  \|\zz_{t+1} - \xx\|^2 + (1 - \xi)^{-1}\|\gamma_t(\uu_{t+1} - \uu)\|^2\\
&\qquad\leq(1 - \rho)\|\zz_{t} - \xx\|^2 +\|\gamma_{t-1}(\uu_{t}\! -\! \uu)\|^2
 + \gamma_{t-1} \delta_{t-1} - 2 \gamma_t \delta_t    ~.
\end{aligned}
\end{equation}
\end{lemma}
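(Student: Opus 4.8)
The plan is to derive a single ``base inequality'' valid for both variants and then convert it into \eqref{eq:recursive_ineq_1} and \eqref{eq:recursive_ineq_2} purely by controlling the step-size; throughout I identify the dual comparison point $\balpha=\uu$ (the two symbols denote the same arbitrary point). First I would read off the first-order information produced by the two proximal steps. Applying Lemma~\ref{lemma:prox_characterization} to the update in Line~\ref{line:x_update} shows that $\boldsymbol{\xi}\defas\gamma_t^{-1}(\zz_t-\xx_{t+1})-\uu_t-\nabla f(\zz_t)\in\partial g(\xx_{t+1})$, and applying it to Line~\ref{line:z_update} together with the definition of $\uu_{t+1}$ (Line~\ref{line:u_update}) gives $\uu_{t+1}\in\partial h(\zz_{t+1})$, equivalently $\zz_{t+1}\in\partial h^*(\uu_{t+1})$ by Lemma~\ref{lemma:conjugate_inverse}. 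I would also record the identity $\xx_{t+1}-\zz_{t+1}=\gamma_t(\uu_{t+1}-\uu_t)$ coming from Line~\ref{line:u_update}.

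Next I would bound the saddle-point gap $\mathcal{L}(\xx_{t+1},\uu)-\mathcal{L}(\xx,\uu_{t+1})$ term by term. For the smooth part I combine the sufficient-decrease condition, written as $f(\xx_{t+1})=Q_t(\xx_{t+1},\gamma_t)-\delta_t$, with the $\mu_f$-strong convexity lower bound of $f$ at $\xx$; the key algebraic event is that the resulting $\langle\nabla f(\zz_t),\xx_{t+1}-\xx\rangle$ term cancels exactly against the gradient inside $\langle\boldsymbol{\xi},\xx_{t+1}-\xx\rangle$, which in turn bounds $g(\xx_{t+1})-g(\xx)$ by convexity of $g$. The conjugate term $h^*(\uu_{t+1})-h^*(\uu)$ is controlled using $\zz_{t+1}\in\partial h^*(\uu_{t+1})$ and the $\mu_h$-strong convexity of $h^*$ (with $\mu_h=1/L_h$ in the smooth case). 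Expanding each remaining inner product via the polarization identity $2\langle\boldsymbol a,\boldsymbol b\rangle=\|\boldsymbol a+\boldsymbol b\|^2-\|\boldsymbol a\|^2-\|\boldsymbol b\|^2$ and substituting $\xx_{t+1}-\zz_{t+1}=\gamma_t(\uu_{t+1}-\uu_t)$ collapses the cross terms into squared distances, yielding the base inequality
\begin{equation*}
2\gamma_t\big(\mathcal{L}(\xx_{t+1},\uu)-\mathcal{L}(\xx,\uu_{t+1})\big)+\|\zz_{t+1}-\xx\|^2+(\gamma_t^2+\gamma_t\mu_h)\|\uu_{t+1}-\uu\|^2\leq(1-\mu_f\gamma_t)\|\zz_t-\xx\|^2+\gamma_t^2\|\uu_t-\uu\|^2-2\gamma_t\delta_t.
\end{equation*}

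Finally I would specialize this to the two variants. Since the step-size search guarantees $\gamma_t\geq\min\{\gamma_0,\tau/L_f\}$, one has $1-\mu_f\gamma_t\leq1-\rho$; and comparing the sufficient-decrease upper bound with the strong-convexity lower bound at $\xx_{t+1}$ forces $\gamma_t\leq1/\mu_f$. For Variant~1 the step-size only decreases, so $\gamma_t\leq\gamma_0$ gives $(1-\sigma)^{-1}\gamma_t^2\leq\gamma_t^2+\gamma_t\mu_h$ on the left, while $\gamma_t\leq\gamma_{t-1}$ and $\delta_t\geq0$ dispose of the remaining right-hand terms, producing \eqref{eq:recursive_ineq_1}. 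For Variant~2 the bound $\gamma_t\leq1/\mu_f$ instead yields $(1-\xi)^{-1}\gamma_t^2\leq\gamma_t^2+\gamma_t\mu_h$, and the growth rule $\gamma_t^2\leq\gamma_{t-1}^2+\gamma_{t-1}\delta_{t-1}(2\beta_h)^{-2}$ of Line~\ref{line:increase_ls} is used to absorb the excess $(\gamma_t^2-\gamma_{t-1}^2)\|\uu_t-\uu\|^2$ into the slack $\gamma_{t-1}\delta_{t-1}$, producing \eqref{eq:recursive_ineq_2}.

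The hard part will be this last step for Variant~2: because the step-size may now increase, the term $\gamma_t^2\|\uu_t-\uu\|^2$ can exceed $\gamma_{t-1}^2\|\uu_t-\uu\|^2$, and controlling the surplus needs the a priori bound $\|\uu_t-\uu\|\leq2\beta_h$. This is precisely where the Lipschitz assumption enters: $\beta_h$-Lipschitzness of $h$ confines $\dom h^*$ to the ball of radius $\beta_h$, so both $\uu_t\in\partial h(\zz_t)$ and the feasible $\uu$ have norm at most $\beta_h$; the growth rule is calibrated so that $(\gamma_t^2-\gamma_{t-1}^2)\,4\beta_h^2\leq\gamma_{t-1}\delta_{t-1}$, which combined with $\|\uu_t-\uu\|^2\leq4\beta_h^2$ gives exactly the needed absorption. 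The only other care required is bookkeeping: tracking the $\delta_t$ terms and the initialization $\gamma_{-1}=\gamma_0$, $\delta_{-1}=0$ through the polarization so that the squared-distance terms telescope cleanly.
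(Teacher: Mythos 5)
Your proposal is correct and follows essentially the same route as the paper's proof: the same prox subgradient inclusions, the same cancellation of the $\nabla f(\zz_t)$ terms, the same polarization/primal-dual substitution leading to an identical base inequality (the paper writes the coefficient $(\gamma_t^2+\gamma_t\mu_h)$ as $(1+\gamma_t^{-1}\mu_h)\|\gamma_t(\cdot)\|^2$), and the same step-size bounds $\gamma_t\geq\min\{\gamma_0,\tau/L_f\}$, $\gamma_t\leq\gamma_0$ (Variant 1) and $\gamma_t\leq1/\mu_f$ (Variant 2) together with the $4\beta_h^2$ absorption argument for the growing step-size.
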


\begin{proof}
The proof is structured in two parts. In the first part, we will bound the ``primal'' suboptimality ${\mathcal{L}(\xx_{t+1}, \balpha_{t+1}) - \mathcal{L}(\xx, \balpha_{t+1})}$ and in the second part we will bound the ``dual'' suboptimality ${\mathcal{L}(\xx_{t+1}, \balpha) - \mathcal{L}(\xx_{t+1}, \balpha_{t+1})}$. Finally, adding both will yield the desired result.

\underline{\emph{Part 1}}. By $\mu_f$-strong convexity $f$ verifies the following inequality for an arbitrary $\xx$
\begin{equation}
f(\zz_{t}) - f(\xx)\leq \langle \nabla f(\zz_{t}), \zz_{t} - \xx\rangle - \frac{\mu_f}{2}\|\zz_{t} - \xx\|^2~. \label{eq:f_convex}
\end{equation}
By the line search condition and the definition of $\delta_t$ (we recall $\delta_t = Q_t(\xx_{t+1}) - f(\xx_{t+1})$) we have at each iteration $f(\xx_{t+1}) = Q_t(\xx_{t+1}) - \delta_t$. This gives by definition of $Q_t$:
\begin{equation}
f(\xx_{t+1}) =  f(\zz_{t}) + \langle \nabla f(\zz_{t}), \xx_{t+1} - \zz_{t} \rangle + \frac{1}{2 \gamma_t}\|\zz_{t}-\xx_{t+1}\|^2 - \delta_t~\\
\end{equation}
Adding the previous two equations gives  the following inequality, which we will make use of later on
\begin{equation}\label{eq:convex_and_l_smooth}
f(\xx_{t+1}) - f(\xx)\,\leq \langle \nabla f(\zz_{t}), \xx_{t+1} - \xx \rangle\, + \frac{1}{2 \gamma_t}\|\zz_{t}-\xx_{t+1}\|^2- \frac{\mu_f}{2}\|\zz_{t} - \xx\|^2- \delta_t~.
\end{equation}

From the subdifferential characterization of the proximal operator (Lemma~\ref{lemma:prox_characterization}), the update $\xx_{t+1} = {\prox_{\gamma_t g}(\zz_{t} - \gamma_t (\uu_{t} + \nabla f(\zz_{t})))}$ of Line~\ref{line:x_update} implies the following subdifferential inclusion
  \begin{equation}
  \frac{1}{\gamma_t}(\zz_{t} - \gamma_t(\uu_t + \nabla f(\zz_{t})) - \xx_{t+1}) \in \partial g(\xx_{t+1})~.
\end{equation}
By of $g$ we then have the following inequality
\begin{align}
    g(\xx_{t+1}) - g(\xx) &\leq \frac{1}{\gamma_{t}} \langle \zz_{t}  - \gamma_t(\uu_{t} +\nabla f(\zz_{t}))- \xx_{t+1}, \xx_{t+1} - \xx\rangle \\
    &=\frac{1}{\gamma_{t}} \langle \zz_{t}  - \xx_{t+1}, \xx_{t+1} - \xx\rangle - \langle \uu_{t} +\nabla f(\zz_{t}), \xx_{t+1} - \xx \rangle~.\label{eq:ineq_g_convexity}
\end{align}
Adding together \eqref{eq:convex_and_l_smooth} and \eqref{eq:ineq_g_convexity} we obtain
\begin{equation}\label{eq:primal_part_ineq}
\begin{aligned}
f(\xx_{t+1}) + g(\xx_{t+1}) - f(\xx) - g(\xx) &\leq \frac{1}{\gamma_{t}} \langle \zz_{t}  - \xx_{t+1}, \xx_{t+1} - \xx\rangle + \frac{1}{2 \gamma_t}\|\zz_{t}-\xx_{t+1}\|^2\\
&\qquad\qquad- \langle \uu_{t}, \xx_{t+1} - \xx\rangle- \frac{\mu_f}{2}\|\zz_{t} - \xx\|^2 - \delta_t~.
\end{aligned}\end{equation}
We will now use this last inequality to bound $\mathcal{L}(\xx_{t+1}, \balpha_t) - \mathcal{L}(\xx, \balpha_t)$.
For this, we will make extensive of the cosine identity $2\langle \boldsymbol a, \boldsymbol b\rangle = {\|\boldsymbol{a} + \boldsymbol{b}\|^2} - \|\boldsymbol a\|^2 - \|\boldsymbol b\|^2$ and the
primal-dual relationship $\gamma_t (\uu_{t+1} - \uu_{t}) = \xx_{t+1} - \zz_t$, which is an immediate consequence of the
definition of $\uu_{t+1}$ in Line~\ref{line:u_update}.
\begin{align}
&\mathcal{L}(\xx_{t+1}, \uu_{t+1}) - \mathcal{L}(\xx, \uu_{t+1}) = f(\xx_{t+1})  + g(\xx_{t+1}) - f(\xx) - g(\xx) + \langle \xx_{t+1} - \xx, \uu_{t+1}\rangle\\
&\qquad\stackrel{\eqref{eq:primal_part_ineq}}{\leq}  \frac{1}{\gamma_t}\langle \zz_{t} - \xx_{t+1}, \xx_{t+1} - \xx\rangle + \frac{1}{2 \gamma_t }\|\zz_{t} - \xx_{t+1}\|^2  + \langle \xx_{t+1} - \xx, \uu_{t+1} - \uu_{t}\rangle  - \frac{\mu_f}{2}\|\zz_{t} - \xx\|^2 - \delta_t\\
&\qquad= \left(\frac{1 - \gamma_t\mu_f}{2 \gamma_t}\right)\|\zz_{t} - \xx\|^2 - \frac{1}{2\gamma_t}\|\xx_{t+1} - \xx\|^2 - \frac{1}{\gamma_t}\langle \xx_{t+1} - \xx, \zz_{t+1} - \xx_{t+1}\rangle - \delta_t \\
&\qquad\qquad \text{ (cosine identity on first term and primal-dual relationship on second-last term)}\nonumber\\
&\qquad= \left(\frac{1 - \gamma_t\mu_f}{2 \gamma_t}\right)\|\zz_{t} - \xx\|^2 - \frac{1}{2\gamma_t}\|\zz_{t+1} - \xx\|^2  + \frac{1}{2\gamma_t}\|\zz_{t+1} - \xx_{t+1}\|^2 - \delta_t\label{eq:primal_suboptimality}\\
&\qquad \qquad\text{ (cosine identity on the second-last term)}\nonumber
\end{align}

\vspace{0.5em} \underline{\emph{Part 2}}. From the subdifferential characterization of the proximal operator (Lemma~\ref{lemma:prox_characterization}), the update $\zz_{t+1} = {\prox_{\gamma_t h}(\xx_{t+1} + \gamma_t \uu_{t})}$ of Line~\ref{line:z_update} we have the following subdifferential inclusion
\begin{equation}
  \uu_{t+1} = \frac{1}{\gamma_t}(\xx_{t+1} - \zz_{t+1}) + \uu_{t} \in \partial h(\zz_{t+1}) \implies
    \zz_{t+1} \in \partial h^*(\uu_{t+1})~,
\end{equation}
where the implication is a consequence of the conjugate-inverse identity (Lemma~\ref{lemma:conjugate_inverse}). As we did before, this inclusion can be used to obtain an inequality in terms of the function values. By $\mu_h$-strong convexity of $h^*$ we have
\begin{equation}\label{eq:dual_part_ineq}
  h^*(\uu_{t+1}) - h^*(\uu) \leq \langle \zz_{t+1}, \uu_{t+1} - \uu \rangle - \frac{\mu_h}{2}\|\uu_{t+1} - \uu\|^2~.
\end{equation}
We can now use this inequality to to bound $\mathcal{L}(\xx_t, \balpha) - \mathcal{L}(\xx_t, \uu_{t+1})$ as follows:
\begin{align}
&\mathcal{L}(\xx_{t+1}, \uu) - \mathcal{L}(\xx_{t+1}, \uu_{t+1}) = h^*(\uu_{t+1}) - h^*(\uu) + \langle \xx_{t+1}, \uu - \uu_{t+1} \rangle\\
&\qquad\stackrel{\eqref{eq:dual_part_ineq}}{\leq} \langle \zz_{t+1} - \xx_{t+1}, \uu_{t+1} - \balpha\rangle - \frac{\mu_h}{2}\|\uu_{t+1} - \uu\|^2\\
&\qquad= \frac{1}{\gamma_t}\langle \zz_{t+1} - \xx_{t+1}, \gamma_t(\uu_{t+1} - \balpha)\rangle  - \frac{\mu_h}{2}\|\uu_{t+1} - \uu\|^2\\
&\qquad=  \frac{1}{\gamma_t}\langle \zz_{t+1} - \xx_{t+1}, \gamma_t(\uu_{t+1} - \balpha)\rangle  - \frac{\gamma_t^{-1}\mu_h}{2\gamma_t}\|\gamma_t(\uu_{t+1} - \uu)\|^2\\
&\qquad= \frac{1}{2\gamma_t}\|\gamma_t(\uu_{t} - \uu)\|^2 - \frac{1}{2\gamma_t}\|\zz_{t+1} - \xx_{t+1}\|^2 - \Big(\frac{1 +\gamma_t^{-1}\mu_h}{2\gamma_t}\Big) \|\gamma_t(\uu_{t+1} - \uu)\|^2~,\label{eq:dual_suboptimality}
\end{align}
where in the second line we have used again the primal-dual relationship $\xx_{t+1} - \zz_{t+1} = \gamma_t (\uu_{t+1} - \uu_{t})$.

\vspace{0.5em} \underline{\emph{Third part: putting it all together}}. Adding the inequalities from Eq.~\eqref{eq:primal_suboptimality} and Eq.~\eqref{eq:dual_suboptimality} and multiplying everything by by $2\gamma_t$ we obtain
\begin{align}
2 \gamma_t(\mathcal{L}(\xx_{t+1}, \balpha) - \mathcal{L}(\xx, \uu_{t+1})) &\leq (1 - \gamma_t \mu_f)\|\zz_{t} - \xx\|^2 - \|\zz_{t+1} - \xx\|^2\nonumber\\
&\qquad  + \|\gamma_t(\uu_{t} - \uu)\|^2  - (1 +\gamma_t^{-1}\mu_h)\|\gamma_t(\uu_{t+1} - \uu)\|^2 - 2 \gamma_t \delta_t~.
\end{align}
We will now prove that $\gamma_t$ is lower bounded by $\min\{\tau L_f^{-1}, \gamma_0\}$ through a distinction of cases. If $\gamma_0 \geq \tau/L_f$, then by the properties of $L_f$-smooth functions, the sufficient decrease condition is verified for all step-size smaller than $1/L_f$. The resulting step-size can still be smaller than this quantity if the sufficient decrease condition fails for some step-size $\gamma> 1/L_f$ and $\tau$ is small enough such that $\tau \gamma \leq 1/L_f$. Even in this worst case scenario we have $\gamma_t \geq \tau/L_f$, which proves the bound for the case $\gamma_0 \geq \tau/L_f$. If $\gamma_0 \leq \tau/L_f$, then the sufficient decrease condition is verified at this first iterate, and the step-size can only increase.

We can hence bound $\gamma_t\mu_f$ by $\rho$, defined in Eq. \eqref{eq:def_linear_rates}, to obtain
\begin{align}\label{eq:recursive_tmp_1}
2 \gamma_t(\mathcal{L}(\xx_{t+1}, \balpha) - \mathcal{L}(\xx, \uu_{t+1})) &\leq (1 - \rho)\|\zz_{t} - \xx\|^2 - \|\zz_{t+1} - \xx\|^2\nonumber\\
&\qquad  + \|\gamma_t(\uu_{t} - \uu)\|^2  - (1 +\gamma_t^{-1}\mu_h)\|\gamma_t(\uu_{t+1} - \uu)\|^2 - 2 \gamma_t \delta_t~.
\end{align}

For Variant 1, we can just drop the non-positive term $-2 \gamma_t\delta_t$ and use the non-increasing step to bound the following terms as:
\begin{align}
\|\gamma_t(\uu_{t} - \uu)\|^2 &\leq \|\gamma_{t-1}(\uu_{t} - \uu)\|^2 \quad \text{ (with $\gamma_{-1} = \gamma_0$ by definition)}\\
- (1 +\gamma_t^{-1}\mu_h) &\leq - (1 +\gamma_0^{-1}\mu_h)\\
&= - \frac{\gamma_0 + \mu_h}{\gamma_0} = - (\frac{\gamma_0+ \mu_h- \mu_h}{\gamma_0 + \mu_h})^{-1}\\
&= -(1 - \frac{\mu_h}{\gamma_0 + \mu_h})^{-1} = - (1 - \sigma)^{-1}
\end{align}
Replacing in \eqref{eq:recursive_tmp_1}  gives
\begin{align}
2 \gamma_t(\mathcal{L}(\xx_{t+1}, \balpha) - \mathcal{L}(\xx, \uu_{t+1})) &\leq (1 - \rho)\|\zz_{t} - \xx\|^2 - \|\zz_{t+1} - \xx\|^2\nonumber\\
&\qquad  + \|\gamma_t(\uu_{t} - \uu)\|^2  - (1 -\sigma)^{-1}\|\gamma_t(\uu_{t+1} - \uu)\|^2 ~,
\end{align}
from where we obtain the desired bound \eqref{eq:recursive_ineq_1} by reordering the terms.

%

We will now derive a similar bound for Variant 2. In this case, the $\beta_h$-Lipschitz assumption on $h$ implies that the norm of every element in $\dom h^*$ is bounded by $\beta_h$ (see e.g., \citep[Corollary 13.3.3]{rockafellar1997convex}). This way we bound $\|\uu_t - \uu\|^2 \leq 2 \|\uu_t\|^2 + 2 \|\uu\|^2 \leq 4 \beta_h$.

Assuming first $t > 0$, we have the following sequence of inequalities for any $\gamma_t \geq \gamma_{t-1}$:
\begin{align}
  \|\gamma_{t}(\uu_{t} - \uu)\|^2 - \|\gamma_{t-1}(\uu_{t} - \uu)\|^2 -  \gamma_{t-1} \delta_{t-1} &= (\gamma_{t}^2  - \gamma_{t-1}^2)\|\uu_{t} - \uu\|^2 -  \gamma_{t-1} \delta_{t-1}\\
  &\leq  (\gamma_{t}^2  - \gamma_{t-1}^2)4\beta_h^2 -  \gamma_{t-1} \delta_{t-1}\\
  & \qquad \text{ (using $\gamma_t \geq \gamma_{t-1}$ and that $h$ is $\beta_h$-Lipschitz)}\nonumber \\
  &\leq (\gamma_{t-1}^2 +  \gamma_{t-1} \delta_{t-1} \beta_h^{-2}/4  - \gamma_{t-1}^2)4\beta_h^2 -  \gamma_{t-1} \delta_{t-1}\\
  &\qquad \text{ (by the choice of $\gamma_{t+1}$ in Line \ref{line:increase_ls})}\nonumber\\
  &= \gamma_{t-1}^2 \beta_h^2 +  \gamma_{t-1} \delta_{t-1} - \gamma_{t-1}^2\beta_h^2 - \gamma_{t-1} \delta_{t-1}\\
  &= 0~,
\end{align}
which reordering gives
\begin{equation}\label{eq:variant2_tmp_inequality}
  \|\gamma_{t}(\uu_{t} - \uu)\|^2 \leq \|\gamma_{t-1}(\uu_{t} - \uu)\|^2 +  \gamma_{t-1} \delta_{t-1}~.
\end{equation}
This inequality is also trivially true when $\gamma_t\leq \gamma_{t-1}$ because of the non-negativity of $\delta_{t}$.
If $t=0$, then we have by definition $\gamma_0 = \gamma_{-1}, \delta_{-1} = 0$ and so the above inequality is also trivially verified.

Finally, plugging this lasts bound in~\eqref{eq:recursive_tmp_1} gives
\begin{equation}\begin{aligned}
2 \gamma_t(\mathcal{L}(\xx_{t+1}, \balpha) - \mathcal{L}(\xx, \uu_{t+1}))
&\leq (1 - \rho)\|\zz_{t} - \xx\|^2 - \|\zz_{t+1} - \xx\|^2\nonumber\\
&\quad  + \|\gamma_{t-1}(\uu_{t} - \uu)\|^2  - (1+ \gamma_t^{-1}\mu_h)\|\gamma_t(\uu_{t+1} - \uu)\|^2 - 2 \gamma_t \delta_t +  \gamma_{t-1}\delta_{t-1}~.
\end{aligned}\end{equation}
Now we will use the following bound on $\gamma_t^{-1}$: $\mu_f \leq \gamma_t^{-1}$. This bound can be deduced from the the strong convexity inequality (Definition~\ref{definition:strong_convexity}), as otherwise the sufficient decrease condition $\delta_t \geq 0$ would not hold. In all, we have the desired bound
\begin{equation}\begin{aligned}
2 \gamma_t(\mathcal{L}(\xx_{t+1}, \balpha) - \mathcal{L}(\xx, \uu_{t+1})) &\leq (1 - \rho)\|\zz_{t} - \xx\|^2 - \|\zz_{t+1} - \xx\|^2 + \|\gamma_{t-1}(\uu_{t} - \uu)\|^2\nonumber\\
&\qquad    - (1 +\mu_f\mu_h)\|\gamma_t(\uu_{t+1} - \uu)\|^2  +  \gamma_{t-1} \delta_{t-1} - 2 \gamma_t \delta_t~.
\end{aligned}\end{equation}
Finally, by trivial algebraic manipulations we have
\begin{equation}
  (1 - \xi)^{-1} = (1 - \frac{\mu_f\mu_h}{1 + \mu_f\mu_h})^{-1} = (\frac{1}{1 + \mu_f\mu_h})^{-1} = {1 + \mu_f\mu_h}
\end{equation}
which is the term that multiplies $\|\gamma_t(\uu_{t+1} - \uu)\|^2$ in the previous equation. Replacing with $(1 - \xi)^{-1}$ in that equation we have the desired bound.

\end{proof}

\vspace{2em}

\subsection{Sublinear convergence}\label{apx:sublinear_convergence}

\begin{customtheorem}{\ref{thm:sublinear}}[sublinear convergence rate]
For every iteration ${t \geq 0}$ and any $(\xx, \balpha)$ in the domain of $\mathcal{L}$ we have the following convergence rate for Algorithm~\ref{alg:algo_three_ls} (both variants):
\begin{equation}
\mathcal{L}(\overline\xx_{t+1}, \uu)- \mathcal{L}(\xx, \overline\uu_{t+1}) \leq \frac{{\|\zz_0 - \xx\|^2} + {\gamma_0^2\|\uu_0 - \uu\|^2}}{2 s_t}~.
\end{equation}
\end{customtheorem}

\begin{proof}
Adding the Equation of Lemma~\ref{lemma:key_recursive} with $\mu_f=\mu_h = 0$ (which implies $\rho=\sigma=\xi=0$) from $0$ to $t$ and dropping positive terms in the left hand side we get for both Variant 1:
\begin{align}
\sum_{i=0}^t \gamma_i\left(\mathcal{L}(\xx_{i+1}, \balpha) - \mathcal{L}(\xx, \uu_{i+1})\right) &\leq \frac{1}{2}\|\zz_0 - \xx\|^2 + \frac{1}{2}\|\gamma_0(\uu_0 - \uu)\|^2\nonumber\\
&\qquad- \|\gamma_t(\uu_{t+1} - \uu)\|^2 - \|\zz_{t+1} - \xx\|^2\\
&\leq \frac{1}{2}\|\zz_0 - \xx\|^2 + \frac{1}{2}\|\gamma_0(\uu_0 - \uu)\|^2~.
\end{align}
Similarly, for Variant 2 we have
\begin{align}
\sum_{i=0}^t \gamma_i\left(\mathcal{L}(\xx_{i+1}, \balpha) - \mathcal{L}(\xx, \uu_{i+1})\right) &\leq \frac{1}{2}\|\zz_0 - \xx\|^2 + \frac{1}{2}\|\gamma_0(\uu_0 - \uu)\|^2 - \|\gamma_t(\uu_{t+1} - \uu)\|^2\nonumber\\
&\qquad \qquad  - \|\zz_{t+1} - \xx\|^2 + \gamma_{-1}\delta_{-1} - \sum_{i=0}^t \gamma_t \delta_t \\
&\leq \frac{1}{2}\|\zz_0 - \xx\|^2 + \frac{1}{2}\|\gamma_0(\uu_0 - \uu)\|^2  + \gamma_{-1}\delta_{-1} - \sum_{i=0}^t \gamma_t \delta_t\\
&\leq \frac{1}{2}\|\zz_0 - \xx\|^2 + \frac{1}{2}\|\gamma_0(\uu_0 - \uu)\|^2~,
\end{align}
where the last line follows from the definition $\delta_{-1} = 0$ and the non-negativity of $\delta_t$.

In all, we have for both variants
\begin{equation}
\sum_{i=0}^t \gamma_i\left(\mathcal{L}(\xx_{i+1}, \balpha) - \mathcal{L}(\xx, \uu_{i+1})\right) \leq \frac{1}{2}\|\zz_0 - \xx\|^2 + \frac{1}{2}\|\gamma_0(\uu_0 - \uu)\|^2~.\label{eq:sum_suboptimality}
\end{equation}

For a fixed $(\xx, \uu)$, the saddle point suboptimality $\mathcal{L}(\xx_t, \balpha) - \mathcal{L}(\xx, \uu_t)$ is convex in $\xx_t$ and $\uu_t$. By Jensens inequality we can then bound the left hand side of Eq.~\eqref{eq:sum_suboptimality} as
\begin{equation}
\frac{1}{s_t}\sum_{i=0}^t \gamma_i(\mathcal{L}(\xx_{i+1}, \balpha) - \mathcal{L}(\xx, \uu_{i+1}))  \geq \mathcal{L}(\overline\xx_{t+1}, \uu) - \mathcal{L}(\xx, \overline\uu_{t+1})~.
\end{equation}
Combining this last inequality with Eq.~\eqref{eq:sum_suboptimality} we obtain
\begin{equation}
\mathcal{L}(\overline\xx_{t+1}, \uu) - \mathcal{L}(\xx, \overline\uu_{t+1}) \leq \frac{1}{2 s_{t}} \left( \|\zz_0 - \xx\|^2 + \|\gamma_0(\uu_0 - \uu)\|^2\right)~,
\end{equation}
which is the desired result.
\end{proof}

\vspace{2em}

\begin{customcorollary}{\ref{cor:sublinear_convergence}} Let $h$ be $\beta_h$-Lipschitz. Then, we have the following rate for the weighted ergodic iterate
\begin{equation}
P(\overline\xx_{t+1})\!-\!P(\xx^\star) \leq  \frac{\|\zz_0 - \xx^\star\|^2 + 2 \gamma_0^2(\|\uu_0\|^2 + \beta_h^2)}{2 s_t}
\end{equation}
\end{customcorollary}
\begin{proof}
  Let $\widehat\uu \defas \argmin_{\uu} \mathcal{L}(\overline\xx_{t+1}, \uu)$ and $(\xx^\star, \uu^\star)$ be a saddle point of $\mathcal{L}$. Then $\mathcal{L}(\overline\xx_{t+1}, \widehat\uu) = P(\overline\xx_{t+1})$ and $\mathcal{L}(\xx^\star, \uu^\star) = P(\xx^\star)$ by definition of Fenchel dual.

Using this and the previous theorem we can write the following set of inequalities
\begin{align}
  P(\overline\xx_{t+1}) - P(\xx^\star) &= \mathcal{L}(\overline\xx_{t+1}, \widehat\uu) - \mathcal{L}(\xx^\star, \uu^\star)\\
  &\leq \mathcal{L}(\overline\xx_{t+1}, \widehat\uu) - \mathcal{L}(\xx^\star, \widehat\uu)\\
  &\quad \text{ (definition of saddle point, Eq.~\eqref{eq:saddle_point} with $\xx=\xx^\star$) }\nonumber\\
  &\leq \frac{1}{2 s_t} \left(\|\zz_0 - \xx\|^2 + \|\gamma_0(\uu_0 -  \widehat\uu)\|^2\right)\\
  &\quad\text{ (Theorem~\ref{thm:sublinear} with $\xx = \xx^\star, \uu = \widehat\uu$)}\nonumber
\end{align}
The $\beta_h$-Lipschitz assumption on $h$ implies that the norm of every element in $\dom h^*$ is bounded by $\beta_h$ (see e.g., \citep[Corollary 13.3.3]{rockafellar1997convex}). This way we bound $\|\gamma_0(\uu_0 -  \widehat\uu)\|^2 \leq 2 \gamma_0^2\|\uu_0\|^2 + 2 \gamma_0^2\| \widehat\uu\|^2 \leq  2\gamma_0^2(\|\uu_0\|^2 + \beta_h^2)$. Plugging this bound into the last inequality we have the desired bound
\begin{equation}
  P(\overline\xx_{t+1}) - P(\xx^\star) \leq \frac{\|\zz_0 - \xx^\star\|^2 + \gamma_0^2(\|\uu_0\|^2 + \beta_h^2)}{2 s_t}~.
\end{equation}
\end{proof}

\vspace{2em}

\subsection{Linear convergence}\label{apx:linear_convergence}

In this subsection we assume that $f$ is $\mu_f$-strongly convex and $h$ is $L_h$-smooth (with $\mu_f > 0, 0 < L_h < +\infty$). We denote by $\xx^\star$ the minimizer of the primal loss (unique by strong convexity of $P$) and by $\uu^\star$ the minimizer of the dual loss (also unique by strong convexity of $D$, which is a consequence of the duality between $L$-smoothness and strong convexity).

\begin{customtheorem}{\ref{thm:linear_convergence}}
Let $\xx_{t+1}, \uu_{t+1}$ be the iterates  produced by Algorithm~\ref{alg:algo_three_ls} after $t$ iterations. Then we have the following linear convergence for Variant 1 (V1) and Variant 2 (V2):
\begin{align}
  &\text{V1}: \|\xx_{t+1} - \xx^\star\|^2 \leq \Big(1 - \min\big\{\rho, \sigma\big\}\Big)^{t+1} D_0\\
  &\text{V2}: \|\xx_{t+1} - \xx^\star\|^2 \leq \Big(1 - \min\big\{\rho,\xi, \mfrac{1}{2}\big\}\Big)^{t+1} E_0~,
\end{align}
with $D_0 \defas 6\|\zz_0 - \xx^\star\|^2 + \frac{6}{1 - \sigma}\|\gamma_0(\uu_0 - \uu^\star)\|^2$
 and $E_0 \defas 6\|\zz_0 - \xx^\star\|^2 + \frac{6}{1 - \xi}\|\gamma_0(\uu_0 - \uu^\star)\|^2$.
\end{customtheorem}

\begin{proof}
\textbf{Variant 1:}
By the duality between Lipschitz gradient and strong convexity, $h$ being $L_h$-smooth implies that $h^*$ is $L_h^{-1}$-strongly convex.
Applying Lemma~\ref{lemma:key_recursive} with $\xx = \xx^\star$, $\uu = \uu^\star$ we obtain the following inequality for Variant 1:
\begin{align}
  2 \gamma_t(\mathcal{L}(\xx_{t+1}, \uu^\star) - &\mathcal{L}(\xx^\star, \uu_{t+1})) + \|\zz_{t+1} - \xx^\star\|^2 + (1 - \sigma)^{-1}\|\gamma_{t} (\uu_{t+1} - \uu^\star)\|^2\\
  &\qquad\qquad\leq (1 - \rho) \|\zz_{t} - \xx^\star\|^2 +  \|\gamma_{t-1}(\uu_{t} - \uu^\star)\|^2~,
\end{align}
where we note that the assumption $0 < L_h$ implies $\sigma < 1$ and so $(1 - \sigma)^{-1}$ is well defined.
For convenience we introduce the notation $\alpha_t \defas \|\zz_t - \xx^\star\|^2 + (1 - \sigma)^{-1}\|\gamma_{t-1} (\uu_t - \uu^\star)\|^2$. With this, the previous inequality can be simplified to
\begin{align}
  2 \gamma_t(\mathcal{L}(\xx_{t+1}, \uu^\star) - &\mathcal{L}(\xx^\star, \uu_{t+1})) + \alpha_{t+1} \leq  (1 - \rho) \|\zz_{t} - \xx^\star\|^2 +  \|\gamma_{t-1}(\uu_{t} - \uu^\star)\|^2~.
\end{align}
By the definition of saddle point we have $\mathcal{L}(\xx_{t+1}, \uu^\star) - \mathcal{L}(\xx^\star, \uu_{t+1}) \geq 0$. Dropping this non-negative term gives
\begin{align}
  \alpha_{t+1} \leq  (1 - \rho) \|\zz_{t} - \xx^\star\|^2 +  \|\gamma_{t-1}(\uu_{t} - \uu^\star)\|^2~.
\end{align}
We now make a distinction of cases based on the relative magnitude of $\rho$ and $\sigma$.
\begin{itemize}
\item If $\rho \leq \sigma$ then $(1 - \rho) \geq (1 - \sigma) \implies 1 \leq (1 - \rho)(1 - \sigma)^{-1}$ and so from the previous equation we have
\begin{align}
  \alpha_{t+1} &\leq (1 - \rho) \|\zz_{t} - \xx^\star\|^2 +  \|\gamma_{t-1}(\uu_{t} - \uu^\star)\|^2\\
  &\leq (1 - \rho) \|\zz_{t} - \xx^\star\|^2 +  (1 - \rho) (1 - \sigma)^{-1}\|\gamma_{t-1}(\uu_{t} - \uu^\star)\|^2 \\
  &= (1 - \rho) \alpha_{t}
\end{align}
\item   Otherwise, if $\sigma < \rho$, then $(1 - \rho) < (1 - \sigma)$ and we have
\begin{align}
 \alpha_{t+1} &\leq (1 - \rho) \|\zz_{t} - \xx^\star\|^2 +  \|\gamma_{t-1}(\uu_{t} - \uu^\star)\|^2\\
 &\leq (1 - \sigma) \|\zz_{t} - \xx^\star\|^2 +  \|\gamma_{t-1}(\uu_{t} - \uu^\star)\|^2 \\
  &= (1 - \sigma) \alpha_{t}
\end{align}
\end{itemize}
Combining the two previous equations we have
\begin{equation}
  \alpha_{t+1} \leq \Big(1 - \min\{\rho,\sigma\}\Big)\alpha_{t}~,
\end{equation}
which leads by recurrence to
\begin{equation}\label{eq:basic_ht_recurrence2}
  \alpha_{t+1} \leq \Big(1 - \min\big\{\rho, \sigma\big\}\Big)^{t+1} \alpha_0~.
\end{equation}
By definition of $\alpha_{t+1}$ the previous inequality gives
\begin{equation}
     \|\zz_{t+1} - \xx^\star\|^2 + (1 - \sigma)^{-1}\|\gamma_{t} (\uu_{t+1} - \uu^\star)\|^2\leq \Big(1 - \min\big\{\rho, \sigma\big\}\Big)^{t+1} \alpha_0.
\end{equation}
From where we have the following geometric bounds for the primal and dual variables
\begin{align}
    \|\zz_{t+1} - \xx^\star\|^2 &\leq \Big(1 - \min\big\{\rho, \sigma\big\}\Big)^{t+1} \alpha_0\\
  \| \gamma_{t} (\uu_{t+1} - \uu^\star)\|^2 &\leq  (1 - \sigma)\Big(1 - \min\big\{\rho, \sigma\big\}\Big)^{t+1} \alpha_0\\
  &\leq  \Big(1 - \min\big\{\rho, \sigma\big\}\Big)^{t+2} \alpha_0\label{eq:recursive_bound_u}
\end{align}
This would be sufficient to derive a convergence rate in terms of $\|\zz_{t+1} - \xx^\star\|^2$. However, since our sublinear convergence result was in terms of $\xx_{t+1}$, we would like to state this result in terms of $\xx_{t+1}$ too. This is possible with a small amount of work and loosing a constant factor $6$.

Using the primal-dual relationship $\xx_{t+1} = \zz_{t+1} + \gamma_{t}(\uu_{t+1}-\uu_{t})$ and an improved version of Young's inequality (Lemma~\ref{lemma:young}) $\|\boldsymbol a+\boldsymbol b+ \boldsymbol c\|^2 \leq 3(\|\boldsymbol a\|^2+ \|\boldsymbol b\|^2 + \|\boldsymbol c\|^2)$ we have
\begin{align}
    \|\xx_{t+1}-\xx^\star\|^2
    &= \|\zz_{t+1}  + \gamma_{t}(\uu_{t+1}-\uu_*) + \gamma_t (\uu_*-\uu_{t}) -\xx^\star\|^2 \quad \text{ (adding and substracting $\uu^\star$)} \\
    &\leq 3\|\zz_{t+1}-\xx^\star\|^2  + 3\|\gamma_{t}(\uu_{t+1}-\uu_{*})\|^2 + 3\|\gamma_{t}(\uu_{*}-\uu_{t})\|^2 \quad\text{ (Young's inequality)} \\
    &\leq 3 \|\zz_{t+1}-\xx^\star\|^2  + 3\|\gamma_{t}(\uu_{t+1}-\uu^\star)\|^2 + 3\|\gamma_{t-1}(\uu_{t}-\uu^\star)\|^2 \\
    & \qquad \text{ ( $\gamma_{t} \leq \gamma_{t-1}$)} \nonumber\\
    &\leq 3\|\zz_{t+1}-\xx^\star\|^2  + 3(1 - \sigma)^{-1}\|\gamma_{t}(\uu_{t+1}-\uu^\star)\|^2 + 3\|\gamma_{t-1}(\uu_{t}-\uu^\star)\|^2 \\
    &\qquad \text{ ($1 \leq (1 - \sigma)^{-1}$)} \nonumber\\
    &\leq 3\Big(1 - \min\big\{\rho, \sigma\big\}\Big)^{t+1} \alpha_0 \,+\, 3\|\gamma_{t-1}(\uu_{t}-\uu^\star)\|^2 \\
    &\qquad \text{ (by Eq.\eqref{eq:basic_ht_recurrence2})} \nonumber\\
    & \leq 3\Big(1 - \min\big\{\rho, \sigma\big\}\Big)^{t+1} \alpha_0 \,+\, 3\Big(1 - \min\big\{\rho, \sigma\big\}\Big)^{t+1} \alpha_0 \quad \text{ (by Eq. \eqref{eq:recursive_bound_u})}  \\
    & \leq \Big(1 - \min\big\{\rho, \sigma\big\}\Big)^{t+1} 6 \alpha_0~.
\end{align}
The claimed rate then follows by definition of $\alpha_0$.

\textbf{Variant 2:}
As in Variant 1, by the duality between Lipschitz gradient and strong convexity, $h$ being $L_h$-smooth implies that $h^*$ is \mbox{$L_h^{-1}$-strongly} convex.
Using Lemma~\ref{lemma:key_recursive} with $\xx = \xx^\star$, $\uu = \uu^\star$ we obtain the following inequality for Variant 2:
\begin{align}
  2 \gamma_t(\mathcal{L}(\xx_{t+1}, \uu^\star) - &\mathcal{L}(\xx^\star, \uu_{t+1})) + \|\zz_{t+1} - \xx^\star\|^2 + (1 - \xi)^{-1}\|\gamma_{t-1} (\uu_t - \uu^\star)\|^2 + 2\gamma_t \delta_t\\
  &\qquad\qquad\leq (1 - \rho) \|\zz_{t} - \xx^\star\|^2 +  \|\gamma_{t-1}(\uu_{t} - \uu^\star)\|^2 +  \gamma_{t-1}\delta_{t-1}~.,
\end{align}
where we note that the assumption $0 < L_h$ implies $\xi < 1$ and so $(1 - \xi)^{-1}$ well defined.
Once again by the definition of saddle point we can drop the first non-negative term to get
\begin{equation}
    \widetilde\alpha_{t+1} \leq (1-\rho)\|\zz_t-\xx^\star\|^2 + \|\gamma_{t-1}(\uu_t-\uu^\star)\|^2 + \gamma_{t-1} \delta_{t-1}~,
\end{equation}
where we noted $\widetilde\alpha_t \defas \|\zz_t-\xx^\star\|^2 + (1-\xi)^{-1} \|\gamma_t(\uu_{t-1}-\uu^\star)\|^2 + 2 \gamma_{t-1} \delta_{t-1}$. Similarly as for the first variant, we can make a distinction of cases to prove the contraction:
\begin{itemize}
  \item If $\rho < \xi$, then $(1 - \rho) \implies (1 - \rho)(1 - \xi) \geq 1$ and we have
  \begin{align}
    \widetilde\alpha_{t+1} &\leq (1 - \rho) \|\zz_{t} - \xx^\star\|^2 +  (1 - \rho) (1 - \xi)^{-1}\|\gamma_{t-1}(\uu_{t} - \uu^\star)\|^2 + \frac{1}{2} (2 \gamma_{t-1} \delta_{t-1}) \\
    &= \left(1 - \min\big\{\rho, \mfrac{1}{2}\big\}\right) \widetilde\alpha_{t}
  \end{align}
  \item   If $\xi < \rho$, then $(1 - \rho) < (1 - \xi)$ and we have
  \begin{align}
   \widetilde\alpha_{t+1} &\leq (1 - \xi) \|\zz_{t} - \xx^\star\|^2 +  \|\gamma_{t-1}(\uu_{t} - \uu^\star)\|^2 + \frac{1}{2} (2 \gamma_{t-1} \delta_{t-1})\\
    &= \left(1 - \min\big\{\xi, \mfrac{1}{2}\big\}\right)  \widetilde\alpha_{t}
    \end{align}
\end{itemize}
Combining the previous two bounds, we have the following geometric decrease on the sequence $(\widetilde\alpha_t)$,
\begin{equation}
    \widetilde\alpha_{t+1} \leq \Big(1 - \min\big\{\rho,\mfrac{1}{2},\xi\big\}\Big) \widetilde\alpha_t~.
\end{equation}
Then by definition of $\widetilde\alpha_{t+1}$ we have
\begin{equation}\label{eq:basic_ht_recurrence_v2}
     \|\zz_{t+1} - \xx^\star\|^2 + (1 - \xi)^{-1}\|\gamma_{t} (\uu_{t+1} - \uu^\star)\|^2 + 2 \gamma_{t}\delta_t \leq \Big(1 - \min\big\{\rho, \sigma, \mfrac{1}{2}\big\}\Big)^{t+1} \widetilde\alpha_0.
\end{equation}
From where we can derive the following geometric bounds for the primal and dual variables:
\begin{align}
    \|\zz_{t+1} - \xx^\star\|^2 &\leq \Big(1 - \min\big\{\rho, \xi, \mfrac{1}{2}\big\}\Big)^{t+1} \tilde\alpha_0\\
  (1 - \xi)^{-1}\| \gamma_{t} (\uu_{t+1} - \uu^\star)\|^2 + 2 \gamma_{t}\delta_t&\leq  \Big(1 - \min\big\{\rho, \xi, \mfrac{1}{2}\big\}\Big)^{t+1} \widetilde \alpha_0\label{eq:bound_dual_var2}
\end{align}
We will now make a distinction of cases on $\xi$ to derive a more convenient bound for this last inequality. If $\xi \leq \frac{1}{2} \implies (1 - \xi)^{-1}\leq \frac{1}{2}$ and so we have
\begin{align}
(1 - \xi)^{-1}\| \gamma_{t} (\uu_{t+1} - \uu^\star)\|^2 + (1 - \xi)^{-1}\gamma_{t}\delta_t &\leq
(1 - \xi)^{-1}\| \gamma_{t} (\uu_{t+1} - \uu^\star)\|^2 + 4(1 - \xi)^{-1}\gamma_{t}\delta_t\\
&\qquad \text{ (by non-negativity of $\delta_t$)}\nonumber\\
&\leq  (1 - \xi)^{-1}\| \gamma_{t} (\uu_{t+1} - \uu^\star)\|^2 + 2 \gamma_{t}\delta_t\\
&\leq \Big(1 - \min\big\{\rho, \xi, \mfrac{1}{2}\big\}\Big)^{t+1} \widetilde \alpha_0  \label{eq:bound_u_xi1}
\end{align}
Multiplying both sides by $(1 - \xi)$ and using $(1 - \xi) \leq (1 - \min\big\{\rho, \xi, \mfrac{1}{2}\big\})$ we have
\begin{equation}
\| \gamma_{t} (\uu_{t+1} - \uu^\star)\|^2 + \gamma_{t}\delta_t\leq \Big(1 - \min\big\{\rho, \xi, \mfrac{1}{2}\big\}\Big)^{t+2} \widetilde \alpha_0  ~.
\end{equation}
If on the other hand, $\xi\geq \frac{1}{2} \implies (1 - \frac{1}{2})(1 - \xi)^{-1}\leq 1$ and so we have
\begin{align}
    \| \gamma_{t} (\uu_{t+1} - \uu^\star)\|^2 + \gamma_{t}\delta_t &\leq (1-\mfrac{1}{2})\left( (1 - \xi)^{-1}\| \gamma_{t} (\uu_{t+1} - \uu^\star)\|^2 +  2\gamma_{t}\delta_t \right)\\
    &\leq  (1 - \mfrac{1}{2})\Big(1 - \min\big\{\rho, \xi, \mfrac{1}{2}\big\}\Big)^{t+1} \widetilde \alpha_0\\
    &\leq \Big(1 - \min\big\{\rho, \xi, \mfrac{1}{2}\big\}\Big)^{t+2} \widetilde \alpha_0\label{eq:bound_u_xi2}
\end{align}
By \eqref{eq:bound_u_xi1} and \eqref{eq:bound_u_xi2}, we see that in both cases we have the following bound:
\begin{equation}
  \| \gamma_{t} (\uu_{t+1} - \uu^\star)\|^2 + \gamma_{t}\delta_t\leq \Big(1 - \min\big\{\rho, \xi, \mfrac{1}{2}\big\}\Big)^{t+2} \widetilde\alpha_0~.\label{eq:recursive_bound_u_2}
\end{equation}

To obtain a convergence rate in terms of $\|\xx_{t+1} - \xx^\star\|^2$, we will use the primal-dual relationship ${\xx_{t+1} = \zz_{t+1} + \gamma_{t}(\uu_{t+1}-\uu_{t})}$ and an improved version of Young's inequality (Lemma~\ref{lemma:young}) $\|\boldsymbol a+\boldsymbol b+ \boldsymbol c\|^2 \leq 3(\|\boldsymbol a\|^2+ \|\boldsymbol b\|^2 + \|\boldsymbol c\|^2)$. We have the following sequence of inequalities:
\begin{align}
    \|\xx_{t+1}-\xx^\star\|^2
    &= \|\zz_{t+1}  + \gamma_{t}(\uu_{t+1}-\uu_*)+ \gamma_t (\uu_*-\uu_{t})) -\xx^\star\|^2\\
    &\qquad \text{ (adding and substracting $\uu^\star$)} \nonumber\\
    &\leq 3\|\zz_{t+1}-\xx^\star\|^2  + 3\|\gamma_{t}(\uu_{t+1}-\uu_{*})\|^2 + 3\|\gamma_{t}(\uu_{t}-\uu_{*})\|^2 \\
    &\qquad \text{ (Young's inequality)} \nonumber\\
    &\leq 3 \|\zz_{t+1}-\xx^\star\|^2  + 3\|\gamma_{t}(\uu_{t+1}-\uu^\star)\|^2 + 3\|\gamma_{t-1}(\uu_{t}-\uu^\star)\|^2 + 3(\gamma_{t-1} \delta_{t-1} (2 \beta_h)^{-2})\|\uu_t - \uu^\star\|^2 \nonumber \\
    &\qquad \text{ (by the maximum step-size increase $\gamma_{t}^2 \leq \gamma_{t-1}^2 + \gamma_{t-1} \delta_{t-1} (2 \beta_h)^{-2}$)}\\
    &\leq 3 \|\zz_{t+1}-\xx^\star\|^2  + 3\|\gamma_{t}(\uu_{t+1}-\uu^\star)\|^2 + 3\|\gamma_{t}(\uu_{t}-\uu^\star)\|^2 + 3 \gamma_{t-1} \delta_{t-1} \\
    &\qquad \text{ (by the Lipschitz assumption on $h$ have $\|\uu_t - \uu^\star\|^2\leq 4\beta_h^2$)}\\
    &\leq 3\Big(1 - \min\big\{\rho, \xi, \mfrac{1}{2}\big\}\Big)^{t+1} \widetilde  \alpha_0 \,+\,  3\|\gamma_{t}(\uu_{t}-\uu^\star)\|^2 + 3 \gamma_{t-1} \delta_{t-1} \\
    &\qquad \text{ (by Eq.\eqref{eq:basic_ht_recurrence_v2} and using the bound $(1 - \xi)^{-1} > 1$)} \nonumber\\
    & \leq 3\Big(1 - \min\big\{\rho, \xi, \mfrac{1}{2}\big\}\Big)^{t+1} \widetilde\alpha_0 \,+\, 3\Big(1 - \min\big\{\rho, \xi, \mfrac{1}{2}\big\}\Big)^{t+1} \widetilde \alpha_0 \\
    &\qquad \text{ (by Eq. \eqref{eq:recursive_bound_u_2})} \nonumber \\
    & \leq \Big(1 - \min\big\{\rho, \xi, \mfrac{1}{2}\big\}\Big)^{t+1} 6 \widetilde\alpha_0.
\end{align}
The desired rate is then a consequence of the definition of $\widetilde\alpha_0$, using that by definition $\delta_{-1} = 0$.
\end{proof}

\vspace{2em}

\subsection{Convergence rates comparison}\label{apx:comparison_convergence_rates}

We compare the obtained convergence rate against
those in \citet{davis2015three} for the fixed step-size strategy.
%

\paragraph{Sublinear convergence.}
Our own Corollary~\ref{cor:sublinear_convergence} and \citep[Corollary D.5.2]{davis2015three} provide a convergence rate of the form
\begin{equation}
  P(\overline\xx_{t+1}) - P(\xx^\star) \leq \frac{L_f}{2 (t+1)} Q_0~,
\end{equation}
with a different definition of $Q_0$ in both cases:
\begin{align}
\text{This work: }Q_0 &\defas \|\zz_0 - \xx^\star\|^2 + \frac{2}{L_f^2} \left( \|\uu_0\|^2 + \beta_h^2\right)\\
  \text{\citep{davis2015three}: } Q_0 &\defas \|\yy_0 - \xx^\star\|^2 + \frac{1}{2}\|\yy_0 - \yy^*\|^2 + \frac{4}{L_f}\|\yy_0 - \yy^*\|\|\nabla f(\xx^\star)\| + \frac{4 \beta_h}{L_f}\|\yy_0 - \yy^*\|~,
\end{align}
where $\yy_0 \defas \zz_0 + \frac{1}{L_f} \uu_0$, $\yy^* \defas \xx^\star + \frac{1}{L} \uu^\star$ the value of $Q_0$ for \citep{davis2015three} was obtained by plugging the step-size $\gamma=1/L_f$ in their Corollary D.5.2, and optimizing with respect to their $\varepsilon$ parameter.

Both quantities are difficult to compare, but it is instructive to compare them on specific cases. If $h=0$ and $\uu_0=\uu^\star=0$ we have
\begin{align}
\text{This work: }Q_0 &= \|\zz_0 - \xx^\star\|^2 \\
  \text{\citep{davis2015three}: } Q_0 &= \frac{3}{2}\|\zz_0 - \xx^\star\|^2 + \frac{4}{L_f}\|\zz_0 - \xx^\star\|\|\nabla f(\xx^\star)\| + \frac{4 \beta_h}{L_f}\|\zz_0 - \xx^\star\|~,
\end{align}
And in this case it is clear that our bound is better. Consider now the case $f=g=0$ and $\zz_0 = \xx^\star = 0$. Then we have the following value of $Q_0$:
\begin{align}
\text{This work: }Q_0 &\defas\frac{2}{L_f^2} \left( \|\uu_0\|^2 + \beta_h^2\right)\\
  \text{\citep{davis2015three}: } Q_0 &\defas \frac{1}{L_f^2}\|\uu_0\|^2 + \frac{1}{2L_f^2}\|\uu_0 - \uu^\star\|^2  + \frac{4 \beta_h}{L_f^{3/2}}\|\uu_0 - \uu^\star\|~,
\end{align}
in this case however the bound of \citep{davis2015three} can be better as it avoids the quadratic depency on $\beta_h$, although the quantity $\|\uu_0 - \uu^\star\|$ is also in the worse case in the order of $2\beta_h$.

\paragraph{Linear convergence.}
For a step-size $\gamma=1/L_f$, the convergence rate of \citep[Theorem D.6, point 6]{davis2017three} gives a convergence of the form (using our notation and the step-size $\gamma=1/L_f$):
\begin{equation}
\frac{2 \mu_f (1 - \eta)}{L_f(1 + \gamma L_h)^2}
\end{equation}
where $\eta$ is a quantity that verifies the inequalities
\begin{equation}
  \eta > \frac{1}{\varepsilon}, \text{ with } \varepsilon > \frac{1}{2}
\end{equation}
by the choice of step-size (see their Theorem D.6) for a definition of these quantities. Optimizing with respect to $\eta$ gives $\eta < 1/2$ and so the rate can be arbitrarily close to
\begin{equation}
\frac{ \mu_f}{L_f(1 + \gamma L_h)^2}   = \rho \sigma^2~.
\end{equation}

This bound is clearly \emph{strictly worse} than our $\min\{\rho, \sigma\}$ of Theorem~\ref{thm:linear_convergence} for Variant 1 with $\tau=1$.
The constant factor $D_0$ is worse than the one obtained by~\citet{davis2015three}, but only by a factor $6(1 - \sigma)^{-1}$, and note also that the rate of \citet{davis2015three} is not given in terms of the primal variables but in terms of the less interpretable quantity $\|\xx_t + \gamma \uu_{t+1} - \xx^\star - \gamma \uu^\star \|^2$.

The difference between our convergence rate and that of \citet{davis2015three} can be quite large, as illustrated in the plot below for values in the interval $[0, \frac{1}{2}]^2$:

\includegraphics[width=0.5\linewidth]{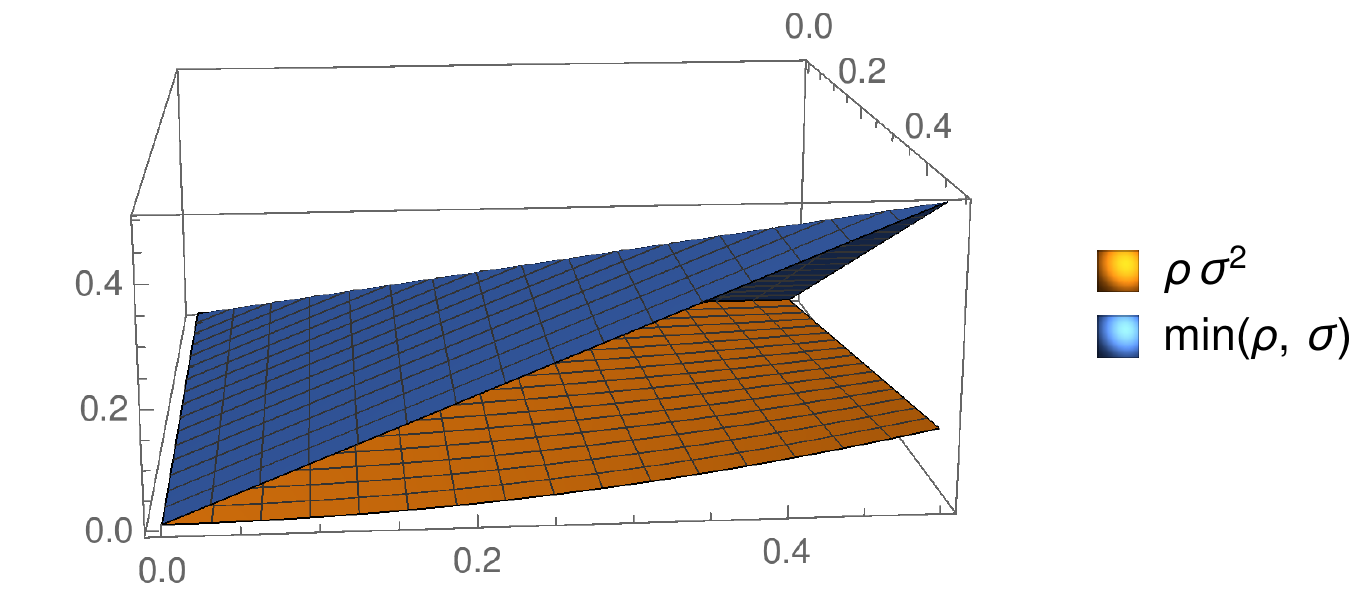}
\clearpage

\section{Proximal operators}\label{apx:proximal_operators}

In this section we provide a more detailed description of the different splitting methods.
We start with a reminder of some penalties that have a known closed form proximal operator.
\begin{itemize}
\item {\bfseries $\ell_1$ norm.} The proximal operator of the $\ell_1$ norm $\|\xx\|_1 \defas \sum_{j=1}^p|\xx_j|$ norm is given by the soft-thresholding operator. More precisely, for $\xx \in \RR^p$ it is given componentwise as
\begin{equation}\label{eq:prox_l1}
[\prox_{\gamma \|\cdot\|}(\xx)]_j = \begin{cases}\left(1 - \frac{\gamma}{|\xx_j|} \right) \xx_j~, &\text{ if $|\xx_j| \geq \gamma$}\\
0 &\text{otherwise}
\end{cases}
\end{equation}
for $j \in \{1, \ldots, p\}$.
\item {\bfseries Group lasso norm.} Let $\mathcal{G}$ be a partition of $\{1, \ldots, p\}$. Then the group lasso or $\ell_1/\ell_2$ norm for the $\mathcal{G}$ partition is defined as $\|\xx\|_{\mathcal{G}} \defas \sum_{g \in \mathcal{G}}\|[\xx]_g\|_2$. Its proximal operator is given as
\begin{equation}
[\prox_{\gamma \|\cdot\|_{\mathcal{G}}}(\xx)]_g =
\begin{cases}\left(1 - \frac{\gamma}{\|[\xx]_g\|} \right)_+ [\xx]_g~, &\text{ if $\|[\xx]_g\| \geq \gamma$}\\
0 &\text{otherwise}
\end{cases}
\end{equation}
A proof of this can be found in~\citep[Proposition 1]{yuan2006model}.
\end{itemize}

\subsection{$\ell_1$ trend filtering.}

This penalty can be split as the sum of three proximal terms $h_1, h_2$ and $h_3$ as follows:
\begin{align}
\|\xx\|_\text{TF} &\defas \textstyle\sum_{i=1}^{p-2} |\xx_i - 2 \xx_{i+1} + \xx_{i+2}|\\
&= \underbrace{\sum_{i=1}^{(p-2)/3} |\xx_i - 2 \xx_{i+1} + \xx_{i+2}|}_{= h_1(\xx)} + \underbrace{\sum_{i=1}^{(p-2)/3}|\xx_{i+1} - 2 \xx_{i+2} + \xx_{i+3}|}_{= h_2(\xx)}+ \underbrace{\sum_{i=1}^{(p-2)/3}|\xx_{i+2} - 2 \xx_{i+3} + \xx_{i+4}|}_{= h_3(\xx)}
\end{align}
The term $h_1$ can be written as $\|\boldsymbol{L} \xx\|_1$ for an $\boldsymbol{L}$ matrix of the form
\begin{equation}
\boldsymbol{L} = \begin{bmatrix}
    1 & -2 & 1 & 0 & 0  & 0 & \dots \\
    0 &  0 & 0 & 1 & -2 & 1 & \ddots & \\
    \vdots & \ddots & \\
    0 &  0 & 0 & 0 & 0 & 0 & 1 & -2 & 1 \\
    \end{bmatrix}
\end{equation}
Usually, penalties of the form $\|\boldsymbol{L} \xx\|_1$ do not have a closed form proximal operator. However, this matrix $\boldsymbol{L}$ has a very special property that will allow to compute its proximal operator in closed form. In particular, this matrix is a \emph{semi-orthogonal} matrix, i.e., it verifies $\boldsymbol{L}\boldsymbol{L}^T = \nu \boldsymbol{I}$ for some $\nu > 0$ ($\nu=6$ in our case). For these matrices, the proximal operator of $\varphi(\boldsymbol{L}\xx)$ can be computed as (see e.g.~\citep[Table 1]{combettes2011proximal}):
\begin{equation}
\prox_{\varphi(\boldsymbol{L}\cdot)} = \xx + \nu^{-1}\boldsymbol{L}^T(\prox_{\nu \varphi}(\boldsymbol{L}\xx) - \boldsymbol{L}\xx),
\end{equation}
where in this case $\varphi$ is the $\ell_1$ norm and so its proximal operator is the soft thresholding operator of Eq.~\eqref{eq:prox_l1}.

\subsection{Isotonic and nearly-isotonic penalties}\label{apx:isotonic}

The isotonic constraint $\{\xx_1 \leq \xx_2 \leq \ldots \leq \xx_p\}$ can be enforced through the use of the indicator function ${\imath\{\xx_1 \leq \xx_2 \leq \xx_3 \leq \xx_4 \leq  \cdots\ \leq \xx_p\}}$.
Suppose that $p=2$, i.e., we only have the constraint $\{\xx_1 \leq \xx_2\}$. In this case, the projection of $(\zz_1, \zz_2)$ can be computed as follows: if $\zz_1 \leq \zz_2$ the projection does obviously nothing, otherwise it can be computed as the projection onto the line generated by the vector $(1, 1)$. This gives:
\begin{equation}\label{eq:prox_2_monotonic}
\prox_{\imath\{\xx_1 \leq \xx_2\}}(\xx_1, \xx_2) =
\begin{cases}(\xx_1, \xx_2) & \text{ if $\xx_1 \leq \xx_2$} \\
((\xx_1 + \xx_2) /  {2}, ({\xx_1 + \xx_2}) / {2}) & \text{ otherwise.})
\end{cases}
\end{equation}

Now lets consider the general case. The full indicator function can be decomposed into a sum of the two following terms
\begin{equation}
  \imath\{\xx_1 \leq \xx_2 \leq \xx_3 \leq \xx_4 \leq  \cdots\} = \underbrace{\imath\{\xx_1\! \leq \!\xx_2; \xx_3 \leq \xx_4;  \cdots\}}_{=g(\xx)}  + \underbrace{\imath\{\xx_2 \leq \xx_3; \xx_4 \leq \xx_5; \cdots\}}_{=h(\xx)}~.
\end{equation}
where the both terms are block separable with blocks of size 2. We can hence use the proximal operator of Eq.~\eqref{eq:prox_2_monotonic} to derive the proximal operator of $g$ and $h$, which by block-separability is merely a concatenation of the previous one. These proximal operators are given block-wise as
\begin{align}
[\prox_{\gamma g}(\zz)]_{\{2 i, 2 i + 1\}} &=
\begin{cases}(\xx_{2i}, \xx_{2i+1}) & \text{ if $\xx_{2i} \leq \xx_{2i+1}$} \\
((\xx_{2i} + \xx_{2i+1}) /  {2}, ({\xx_{2i} + \xx_{2i+1}}) / {2}) & \text{ otherwise.})
\end{cases}\\
[\prox_{\gamma h}(\zz)]_{\{2 i + 1, 2 i + 2\}} &=
\begin{cases}(\xx_{2i+1}, \xx_{2i+2}) & \text{ if $\xx_{2i+1} \leq \xx_{2i+2}$} \\
((\xx_{2i+1} + \xx_{2i+2}) /  {2}, ({\xx_{2i+1} + \xx_{2i+2}}) / {2}) & \text{ otherwise.})
\end{cases}
\end{align}

\paragraph{The nearly isotonic penalty.} The nearly isotonic penalty was proposed by~\citet{tibshirani2011nearly} as a relaxation of the aforementioned isotonic constraints and is defined as
\begin{equation}
  \|\xx\|_{\text{iso}} \defas \sum_{i=1}^{p-1} \max\{\xx_{i}- \xx_{i+1}, 0\}~.
\end{equation}
The penalty is zero if $\xx_{i} \leq \xx_{i+1}$ and so it encourages the coefficients to be non-decreasing. It can be split and similarly as we did for the isotonic constraints:
\begin{equation}
  \sum_{i=1}^{p-1} \max\{\xx_{i}- \xx_{i+1}, 0\} = \underbrace{\sum_{i=1}^{\lfloor
  p/2\rfloor} \max\{\xx_{2i}- \xx_{2i+1}, 0\}}_{=g(\xx)} + \underbrace{\sum_{i=1}^{\lfloor
  (p-1)/2\rfloor} \max\{\xx_{2i+1}- \xx_{2i+2}, 0\}}_{=h(\xx)}~.
\end{equation}
Let $\psi(\xx_1, \xx_2) \defas \max\{\xx_1 - \xx_2, 0\}$. If we denote by $\partial_i$ the subgradient with respect to the $i$-th coordinate we have
\begin{align}
  \partial_1 \psi(\xx_1, \xx_2) = \begin{cases}
  0 & \text{ if } \xx_1 < \xx_2\\
  [0, 1] & \text{ if } \xx_1 = \xx_2\\
  1 & \text{ if } \xx_1 > \xx_2
  \end{cases} \quad\text{ and } \quad
  \partial_2 \psi(\xx_1, \xx_2) = \begin{cases}
  0 & \text{ if } \xx_1 < \xx_2\\
  [-1, 0] & \text{ if } \xx_1 = \xx_2\\
  -1 & \text{ if } \xx_1 > \xx_2
  \end{cases}
\end{align}

 then it is easy to verify using the subdifferential inclusion of Lemma~\ref{lemma:prox_characterization} that its proximal operator is given by
\begin{equation}
  \prox_{\gamma \psi}(\xx_1, \xx_2) = \begin{cases}(\xx_1, \xx_2) & \text{ if $\xx_1 \leq \xx_2$}\\
  (\xx_1 - \gamma, \xx_2 + \gamma) &\text{ if $\xx_1 - \gamma \geq \xx_2 + \gamma$}\\
  ((\xx_1 + \xx_2) / 2, (\xx_1 + \xx_2)/2) &\text{ otherwise }.
  \end{cases}
\end{equation}
The proximal operator of the nearly isotonic penalty can then be computed by applying block-wise the above proximal operator.

\subsection{Doubly stochastic constraints}

The set of double stochastic matrices is composed of square matrices with nonnegative entries, each of whose rows and columns sum to $1$, i.e., $\{\XX^T \boldsymbol{1} = \boldsymbol{1}, \boldsymbol{1}^T \XX = \boldsymbol{1}, \XX \geq \boldsymbol{0}\}$. The indicator function over this set can be split as
\begin{equation}
\underbrace{\imath\{\XX^T \boldsymbol{1} = \boldsymbol{1}, \boldsymbol{1}^T \XX = \boldsymbol{1}\}}_{=g(\XX)} + \underbrace{\imath\{\XX \geq \boldsymbol{0}\}}_{=h(\XX)} \quad.
\end{equation}
For any $\XX \in \RR^{p \times p}$, the proximal operator of $g$ and $h$ is then given in closed form as
\begin{align}
\prox_{\gamma g}(\XX) &= \XX + \left( \frac{1}{n}\boldsymbol{I} + \frac{\boldsymbol{1}^T \XX \boldsymbol{1}}{n^2} \boldsymbol{I} - \frac{1}{n}\XX\right)\boldsymbol{1}\boldsymbol{1}^T - \frac{1}{n}\boldsymbol{1}\boldsymbol{1}^T \XX\\
\prox_{\gamma h}(\XX) &= (\XX + |\XX|) / 2~,
\end{align}
where $|\cdot|$ takes the absolute value componentwise and $\boldsymbol{1}$ denotes the $p$-dimensional vector of ones.
The proof of this result can be found in \citep{lu2016fast}.

\clearpage

\section{Benchmarks}\label{apx:benchmarks}

In this appendix we provide a more detailed description of the experiments, as well as present an extended set of benchmarks.

\begin{itemize}
  \item \ref{apx:implementation} discusses the implementation of the different algorithms.
  \item  \ref{apx:synthetic} details the synthetic data generation process that we used in the synthetic benchmarks.
  \item Each of the experiments we run has a subsection in this appendix with an extended discussion and set of results. These are organized in the order they appear in Figure~\ref{fig:main_figure}: Overlapping group lasso (\ref{apx:group_lasso}), total variation (\ref{apx:deblurring}), sparse and low rank matrix recovery with trace norm $\ell_1$ penalty (\ref{apx:sparse_lowrank}), and nearly isotonic penalty (\ref{apx:nearly_isotonic_bench}).
\end{itemize}

\subsection{Implementation details}\label{apx:implementation}

Particular care has been taken to ensure a fair and extensive comparison.
We have implemented all algorithms in Python, and used the just-in-time compiler Numba for some non-vectorizable computationally demanding parts such as computing the fused lasso proximal operator in the total variation-regularized problems. We profiled and optimized all algorithms equally, and in some cases made modifications that depart from the canonical description when this improved significantly performance (see TOS-AOLS below).

All methods have been implemented in Python and except TOS-AOLS are part of the C-OPT optimization package~\citep{fabian_pedregosa_2017_437991}.

\paragraph{PDHG.}
The PDHG or Condat-V\~u algorithm~\citep{condat2013primal, vu2013splitting} depends on two step-size parameters $\tau$ and $\sigma$. We parametrize the second step-size as $\sigma=\beta/\tau$
(this gave a better results than the parametrization $\sigma=\beta\tau$ used by \citet{malitsky2016first} for the range $\beta$ considered)
 then by~\citep[Theorem 3.1]{condat2013primal} $\tau$ needs to be of the form
\begin{equation}
  \tau \leq \frac{2 (1 - \beta)}{L_f}~,~\text{ with $\beta < 1$ }.
\end{equation}
With the above value of $\tau$, we tested three different values for $\beta$, $\beta=0.1, 0.5, 0.9$, and selected the one that had overall better performance (we found this to be $\beta=0.5$) hence giving it a slight advantage with respect to other methods that do not require step-size tuning.

\paragraph{Adaptive PDHG.}
Although one of the step-sizes is computed by a line search method, due to the two step-sizes it still requires to select the parametrization constant $\beta$. This parameter was computed the same way than for PDHG, except in this case we found the best performing value was $\beta=0.1$.
In this method the step-size is allowed to grow, and the initial value for the step-size in next iterate needs to be selected from an interval of admissible values.
 Following the experimental section of~\citep{malitsky2016first}, we always take the maximum admissible step-size.

As it is visible from some of the benchmarks (see e.g., Figures \ref{fig:group_lasso_synthetic}, \ref{fig:trace_norm}, \ref{fig:isotonic}), the method seems to suffer more than other methods from numerical instabilities in the high precision regime (typically when the suboptimality has reached $10^{-10}$).  We investigated this issue and developed an implementation that uses Moreau's decomposition to replaces $\prox_{ \tau h^*}$ by $\prox_{h/\tau}$, thinking that perhaps the different scales between $\tau$ and $\beta/\tau$ would be the cause of the instability, but this did not solve the issue. Interestingly, these instabilities did not appear on the non adaptive variant, despite both algorithms share much of the same code.

\paragraph{TOS-AOLS.} We implemented the averaged operator line search method  of \citet{giselsson2016line}, using TOS as the averaged operator and name it TOS-AOLS. Our implementation has a crucial improvement with respect to the version detailed in the original reference that we found was crucial to obtain competitive results. In our implementation, the line search multiplicative factor $\alpha_t$ (in their notation) is not bound to start from an upper bound $\alpha_{\text{max}}$ as is described in the reference but is instead incremented as $\alpha_{t+1} = 1.05 \alpha_t$ in case of success of the line search condition. This gave much better empirical results.

\paragraph{Adaptive TOS.} As described in \S\ref{scs:methods}.

\subsection{Synthetic data generation}\label{apx:synthetic}

For the experiments with synthetic data (except for the image deblurring task), we followed the same data generation process as in \citep{agarwal2010fast}, which generates a design matrix $\{\boldsymbol{a}_i\}_{i=1}^n$ according to the recursion
\begin{align}
\boldsymbol{a}_1 &= \zz_1\\
\boldsymbol{a}_i &= \zz_i + p \boldsymbol{a}_{j-1} \text{ for $i> 1$}
\end{align}
with $\zz_i$ sampled from a standardized Gaussian distribution. $p \in [0, 1)$ is a correlation parameter that makes the problem more ill-conditioned as $p \to 1$. This is necessary as otherwise the resulting dataset has an unrealistic perfect spectrum. In the experiments we chose $p=0.95$, which for a  matrix of size $65 \times 65$ gives a condition number of around 60. Although this value of $p$ might seem high, the resulting condition number is in fact smaller than the one found in the real datasets. For example, the \texttt{real-sim} dataset has a condition over 200.


\subsection{Overlapping group lasso benchmarks}\label{apx:group_lasso}

We consider an overlapping group lasso penalty with the following groups of size 10:
$\{\{1, \ldots, 10\}, \{8, \ldots, 18\}, \{16, \ldots, 26\}, \ldots\}$, where each group has an overlap of 2 coefficient with the previous groups.
This is modeled after the synthetic experiments of in~\citep[\S9.1]{jacob2009group}.
This set of groups can be split into two set $\mathcal{G}$ and $\mathcal{H}$ of disjoint groups by including in $\mathcal{G}$ the odd groups and in $\mathcal{H}$ the even ones, i.e., $\mathcal{G} = \{\{\{1, \ldots, 10\}, \{16, \ldots, 26\},\ldots\}$, $\mathcal{H} = \{\{8, \ldots, 18\},\{24, \ldots, 34\}, \ldots\}$.

With this notation, we can write the overlapping group lasso-penalized logistic regression problem as follows,
\begin{equation}\label{eq:overlapping_group_lasso}
  \minimize_{\xx\in \RR^p}\underbrace{\frac{1}{n}\sum_{i=1}^n \log(1 + \exp(- b_i \boldsymbol{a}_i^T \xx))}_{=f(\xx)} + \underbrace{\lambda\sum_{G \in \mathcal{G}}\|[\xx]_G\|}_{=g(\xx)} +  \underbrace{\lambda\sum_{H \in \mathcal{H}}\|[\xx]_H\|}_{=h(\xx)}~,
\end{equation}
where $\lambda$ is a regularization parameter and $\{b_i, \boldsymbol{a}_i\}_{i=1}^n$ is the dataset that we will discuss later.

\paragraph{Lipschitz constant of the proximal term.} For any set of disjoint groups $\mathcal{G}$ and any vector $\xx$ we have ${\sum_{G \in \mathcal{G}}\|\xx\| \leq \sqrt{|\mathcal{G}|}\|\xx\|}$, where $|\mathcal{G}|$ denotes the cardinality of $\mathcal{G}$. Hence, the Lipschitz constant of $h$ is upper bounded by $\lambda\sqrt{|\mathcal{G}|}$. This is the value we used in the experiments.

\paragraph{Dasets.} We consider the following three datasets:
\begin{itemize}
  \item The \texttt{real-sim} dataset, retrieved from the libsvm dataset collection.\footnote{\url{https://www.csie.ntu.edu.tw/~cjlin/libsvmtools/datasets/binary.html}} The size of this dataset is $n =72,309, p = 20,958$ and a density of 2\% (i.e., 98\% of zero coefficients in the data matrix).

  \item The \texttt{RCV1} dataset~\citep{lewis2004rcv1}, also retrieved from the \texttt{libsvm} dataset collection. The dimensions in this case are $n=20,242$, $p=677,399$, with a density of 0.1\%.

  \item A synthetic dataset, in which $\boldsymbol{a}_i$ is generated according to \ref{apx:synthetic} and a vector of ``ground truth'' is generated by randomly selecting 10 groups and setting them to a random value generated from a standard Gaussian distribution (in this case the data matrix is fully dense).
  In Python, for a problem of size $n=100$, $p=1002$:
  \begin{python}
  import numpy as np
  np.random.seed(0)

  n_samples, n_features = 100, 1002
  groups = [np.arange(8 * i, 8 * i + 10) for i in range(125)]

  ground_truth = np.zeros(n_features)
  g = np.random.randint(0, len(groups), 10)
  for i in g:
      ground_truth[groups[i]] = np.random.randn()
  \end{python}
\end{itemize}

\paragraph{Benchmarks.} The results for this model and the above datasets can be seen in Figure~\ref{fig:group_lasso_real} for the \texttt{real-sim} and \texttt{RCV1} datasets and in Figure~\ref{fig:group_lasso_synthetic} for the synthetic dataset. On 11 out of 12 cases the Adaptive TOS (variant 2) algorithm is the best performing method, and only marginally slower in the other case.

\begin{figure}[h]
  \includegraphics[width=\linewidth]{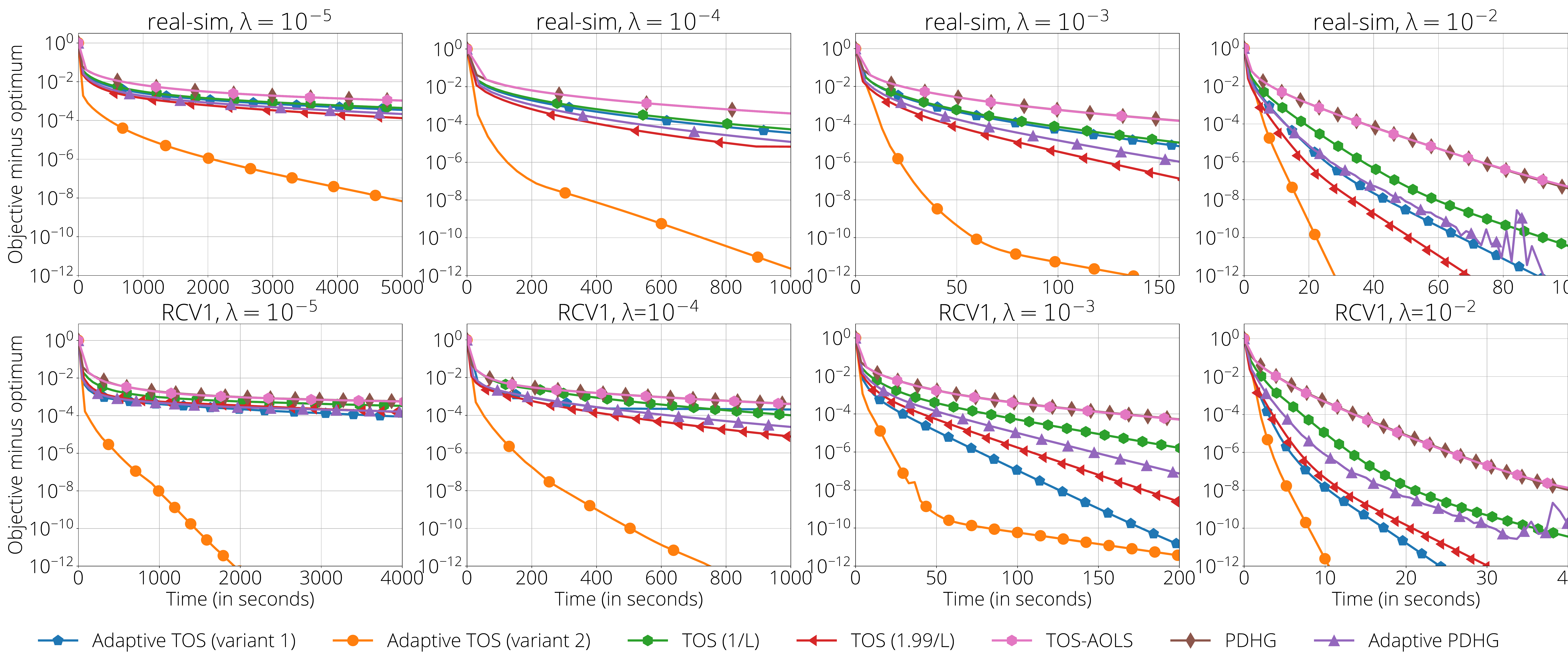}
  \caption{{\bfseries Logistic regression with overlapping group lasso penalty, text datasets}.  Top row: \texttt{real-sim} dataset, bottom row: \texttt{RCV1} dataset. The columns denote the amount of regularization, from a parameter $\lambda$ giving $\approx 50\%$ of zero coefficients (left) to a parameter giving $\approx 5\%$ of zero coefficients}\label{fig:group_lasso_real}
\end{figure}

\begin{figure}[h]
\includegraphics[width=\linewidth]{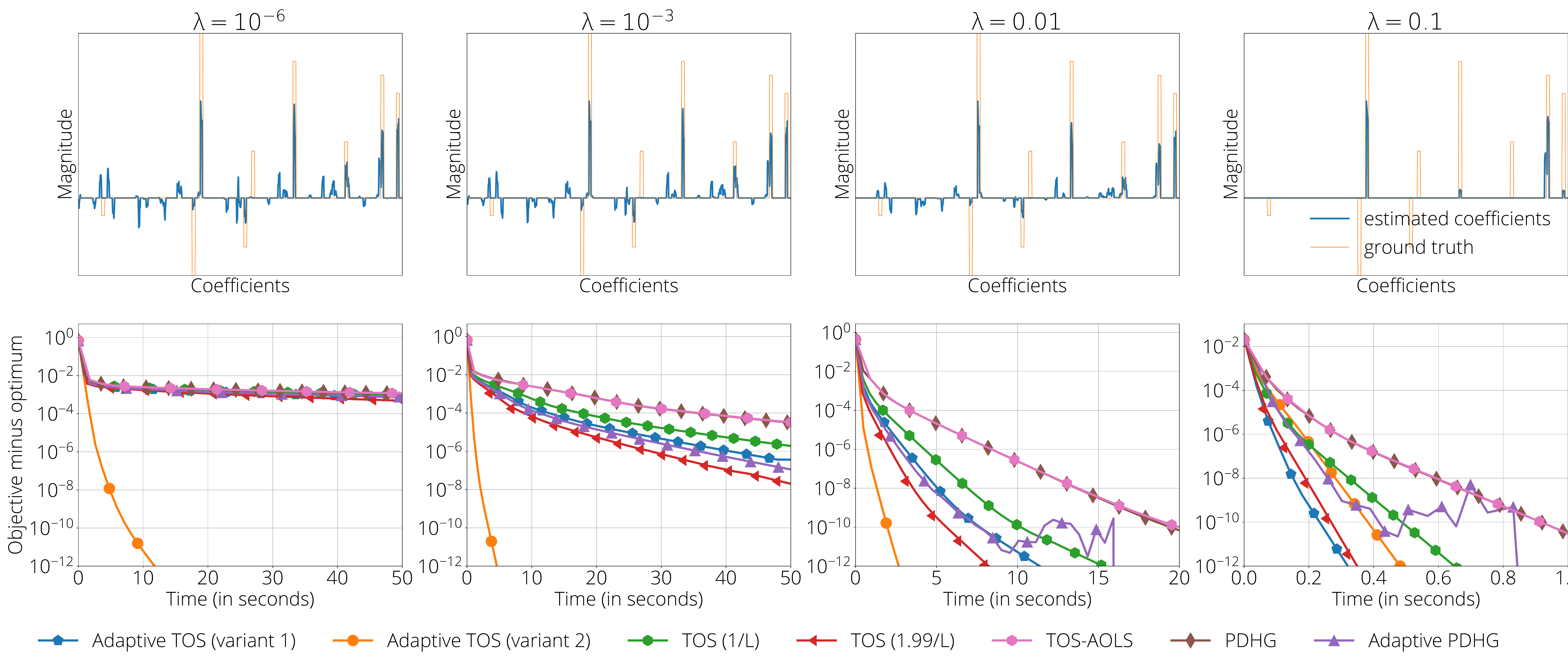}
\caption{{\bfseries Logistic regression with overlapping group lasso penalty, simulation dataset}. Top row: ground truth and estimated coefficients. Bottom row: time vs suboptimality comparison. Columns represent different values of the $\lambda$ regularization parameter}\label{fig:group_lasso_synthetic}
\end{figure}

\clearpage

\subsection{Total variation benchmarks (image deblurring with known blur operator)}\label{apx:deblurring}

In this benchmark we consider a classical image deblurring task with known blur operator. For this we choose a natural image shown below as ``Original image'' and generated another image by convolving the original image with a blur kernel and adding standardized Gaussian noise, shown below as ``Observed image''. The blur kernel is displayed in bottom left corner of Observed image
\begin{figure}[h]
\centering\includegraphics[width=0.6\linewidth]{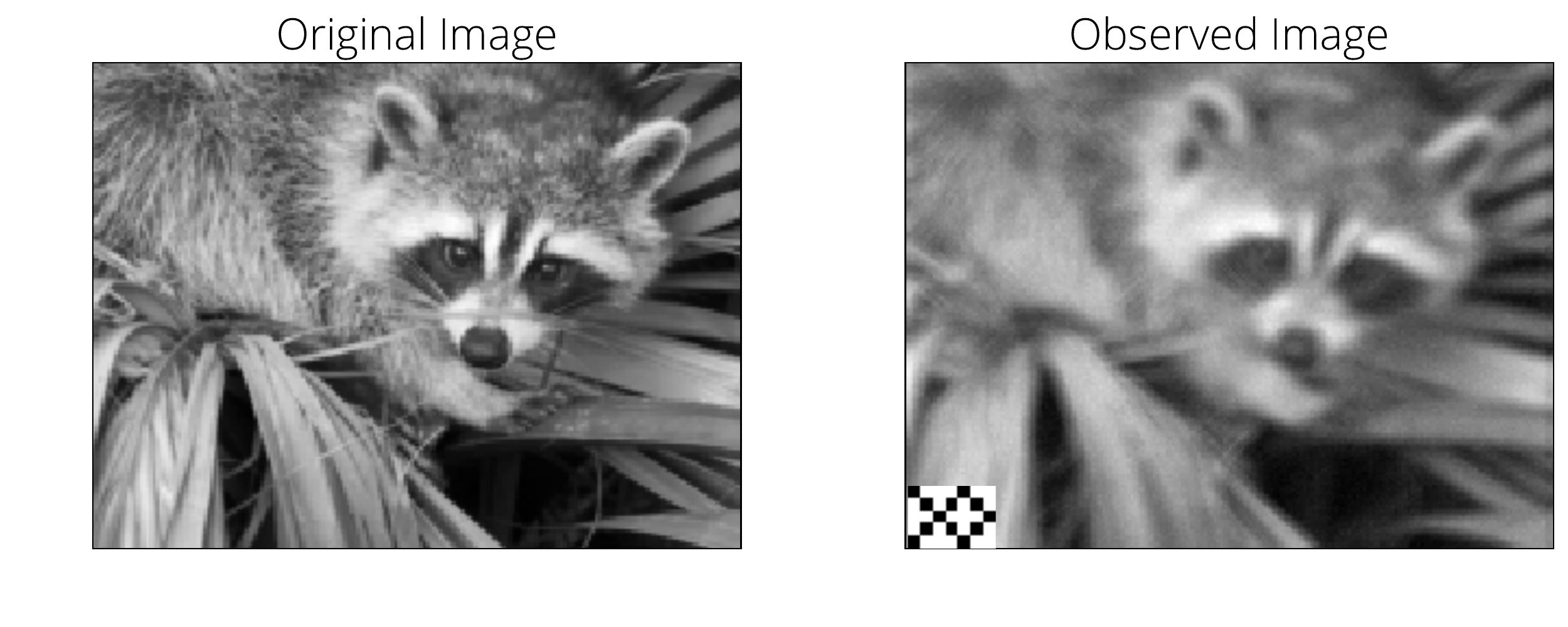}
\caption{{\bfseries Left}: Original image. {\bfseries Right}: Observed image, obtained by convolving with the kernel shown in lower left corner and with added Gaussian noise. Columns represent different values of the $\lambda$ regularization parameter}
\end{figure}

The image recovery can then be posed as least squares problem with a total variation penalty~\citep{rudin1992nonlinear, chambolle2016introduction} of the form:
\begin{align}
  &\minimize_{\XX \in \RR^{p \times q}} \|\boldsymbol{Y} - \boldsymbol{B} \XX \|^2 + \lambda\|\XX\|_{\text{TV}}~, \\
  \equiv&\minimize_{\XX \in \RR^{p \times q}} \underbrace{\|\boldsymbol{Y} - \boldsymbol{B} \XX \|^2\vphantom{\sum_i}}_{=f(\XX)}
   + \lambda\underbrace{\sum_{i=1}^p {\sum_{j=1}^{q-1}} |{\boldsymbol X}_{i, j+1} - {\boldsymbol X}_{i, j}|}_{ = g(\XX)}
   + \sum_{j=1}^q \underbrace{{\sum_{j=1}^{p-1}} |{\boldsymbol X}_{i+1, j} - {\boldsymbol X}_{i, j}|}_{ = h(\XX)}\,,\label{eq:tv_splitting}
\end{align}
where $\YY$ is the observed image, $\boldsymbol{B}$ is the blurring (linear) operator and $g$ and $h$ are fused lasso (also known as 1-dimensional total variation) penalties acting on the columns and rows of $\XX$ respectively. Their proximal operator was implemented using the algorithm of~\citet{condat2013direct}.

Alternatively, PDHG can solve this problem using a different splitting in which $g(\xx)=0$ and $\lambda\|\XX\|_{\text{TV}} = h(\KK \xx)$, where $h$ is the $\ell_1$ penalty and $\KK$ is the matrix of first order finite differences. We compared both splittings and concluded that the splitting of Eq.~\eqref{eq:tv_splitting} gave better results.

\paragraph{Lipschitz constant of the proximal term.}
For a matrix $\XX \in \RR^{p \times q}$ we have
\begin{equation}
\|\XX\|_{\text{FL}} = \|\boldsymbol{K}\text{vec}(\xx)\|_1 \leq \|\boldsymbol{K}\|_1 \|\text{vec}(\xx)\|_1 \leq 2 \|\text{vec}(\xx)\|_1\leq 2 \sqrt{p q}\|\text{vec}(\xx)\|~,
\end{equation}
and so the Lipschitz constant of $h$ can be bounded by $2 \sqrt{p q}$. This is the quantity we used in the experiments.

\paragraph{Benchmarks.} We show recovered image as well as benchmarks in Figure~\ref{fig:faces}. In this case, we found that the fixed step-size strategy with step-size $1.99/L$ marginally outperformed the Adaptive TOS on 3 out of 4 cases (but never by more than a factor of 1.5). Interestingly, this same method performed the worse on the high regularization setting (right column).

\begin{figure*}[h]
\includegraphics[width=\linewidth]{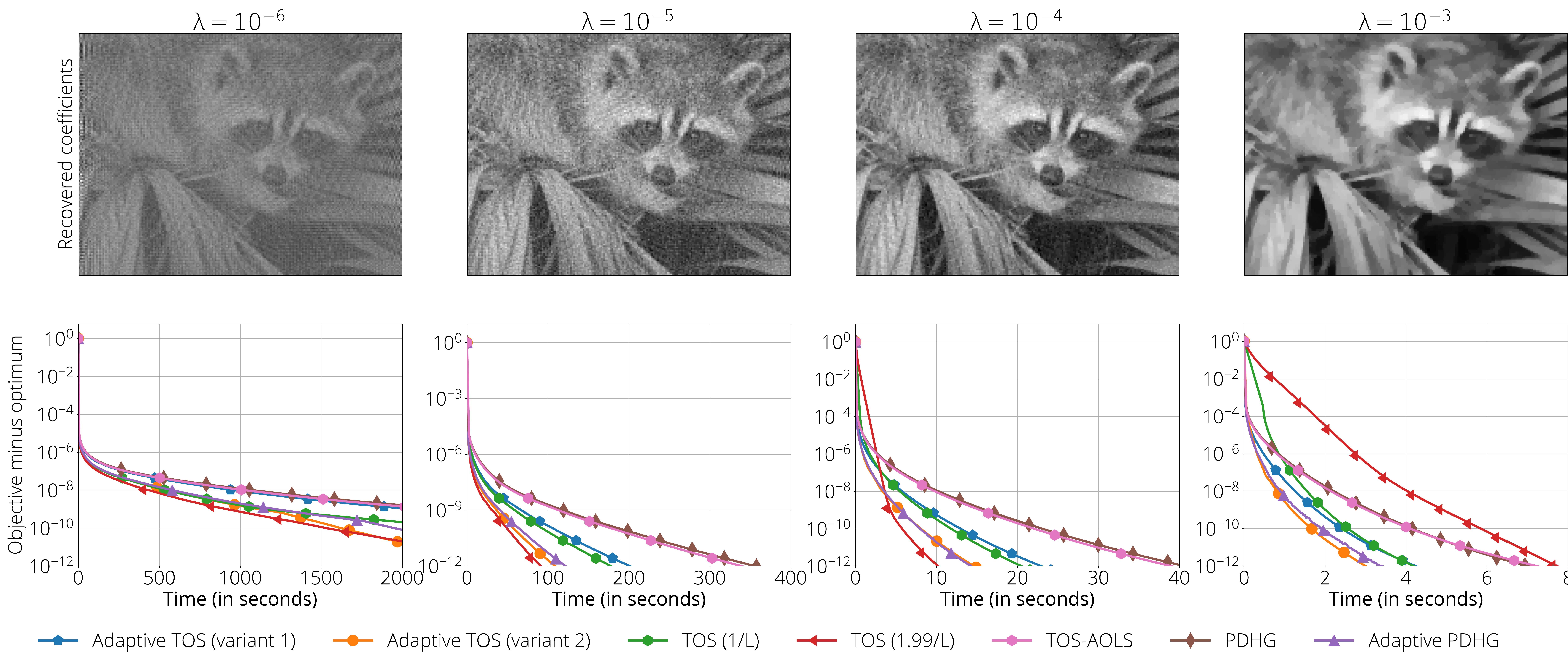}
\caption{{\bfseries Least Squares regression with total variation penalty}. Top row: recovered image. Bottom row: time vs suboptimality comparison. Columns represent different values of the $\lambda$ regularization parameter}\label{fig:faces}
\end{figure*}

\clearpage

\subsection{Sparse and low rank matrix recovery benchmarks}\label{apx:sparse_lowrank}

\begin{minipage}[t]{.7\linewidth}
We generated a sparse and low rank $20 \times 20$ symmetric matrix as the one in the right using the procedure detailed in~\citep{richard2012estimation}. This matrix can be seen in the right. We will denote this matrix by $\XX_{\text{truth}}$.
We then generate the target values as $b_i$ as $b_i = \boldsymbol{a}_i^T \text{vec}(\XX_{\text{truth}}) + \varepsilon_i$,
where $\varepsilon$ is a random noise, generated from a zero-mean, unit variance Gaussian distribution. We generated 200 of these samples, so that the problem has twice as many samples as features.

It has been shown that it is possible to promote sparse and low rank solutions by using a penalty composed of a trace (or nuclear) norm and a (vector) $\ell_1$ norm (see e.g., \citep{richard2012estimation}), where the trace norm is the sum of the  absolute values and we will denote it $\|\cdot\|_*$.
\end{minipage}
\begin{minipage}[t]{.28\linewidth}\vspace{-0.5em}
\includegraphics[width=\textwidth]{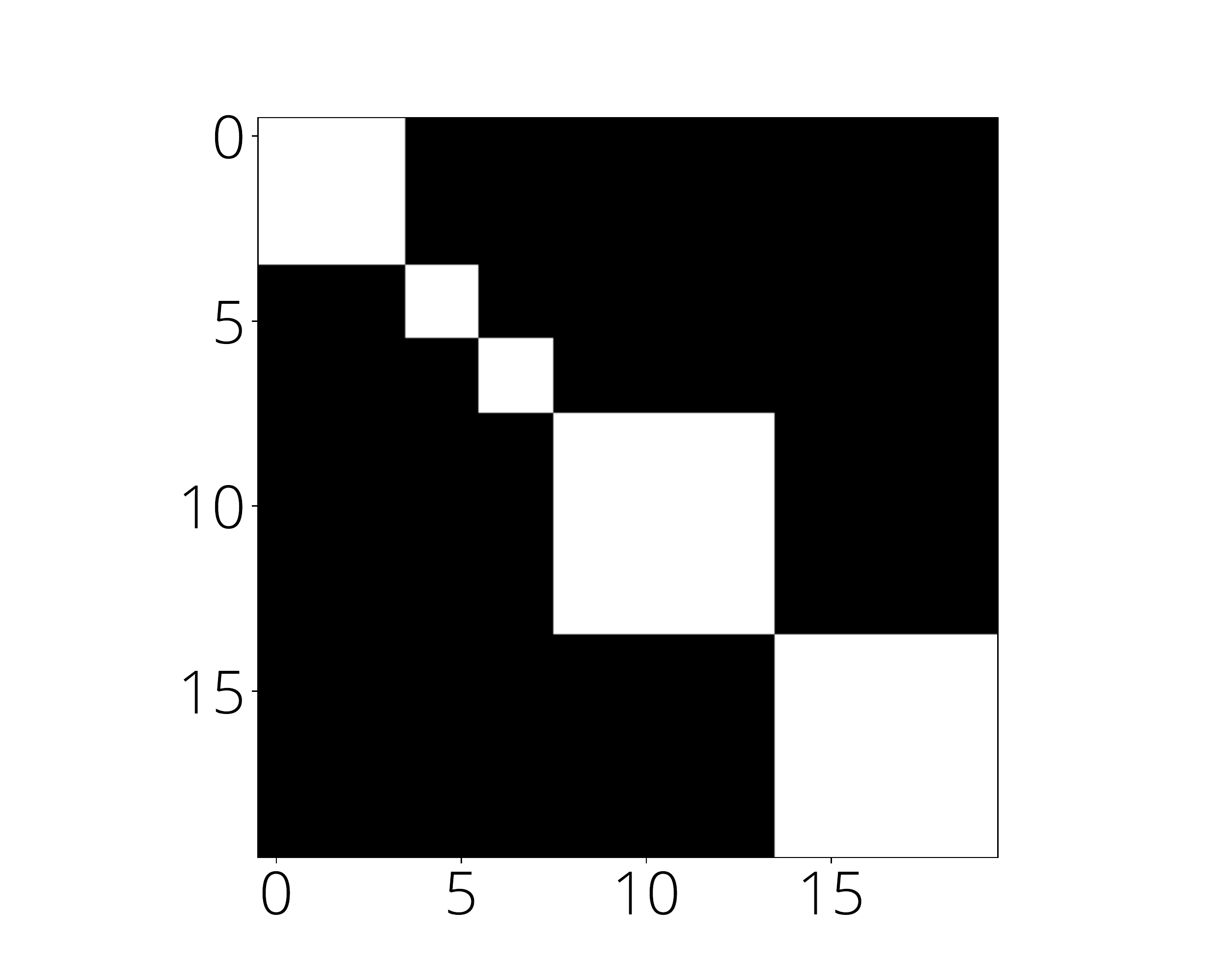}
\end{minipage}

In all, we consider the following problem:
\begin{equation}\label{eq:trace_norm}
  \minimize_{\XX\in \RR^{20\times 20}} \underbrace{\frac{1}{n}\sum_{i=1}^n (b_i - \boldsymbol{a}_i^T \text{vec}(\XX))^2}_{=f(\XX)} + \underbrace{\lambda \|\XX\|_*\vphantom{\sum_i}}_{=g(\XX)} + \underbrace{\vphantom{\sum_i}\mu \|\text{vec}(\XX)\|_1}_{=h(\XX)}~,
\end{equation}
where the proximal operator of $g$ is given by soft thresholding the singular values~\citep{cai2010singular}.

\paragraph{Lipschitz constant of proximal term.} By the properties of vector norms, for a matrix $\XX$ of size $p\times q$ we have that $\|\text{vec}(\XX)\|_1 \leq \sqrt{pq}\|\text{vec}(\XX)\|$. Hence, we can bound the Lipschitz constant of $h$ by  $\lambda\sqrt{pq}$.

\paragraph{Benchmarks.}
The results of the time comparison and recovered coefficients can be see in the Figure~\ref{fig:trace_norm} below. For simplicity we consider $\lambda=\mu$, but of course more accurate coefficients could be recovered by tuning both regularization parameters.

\begin{figure}[h]
\includegraphics[width=\linewidth]{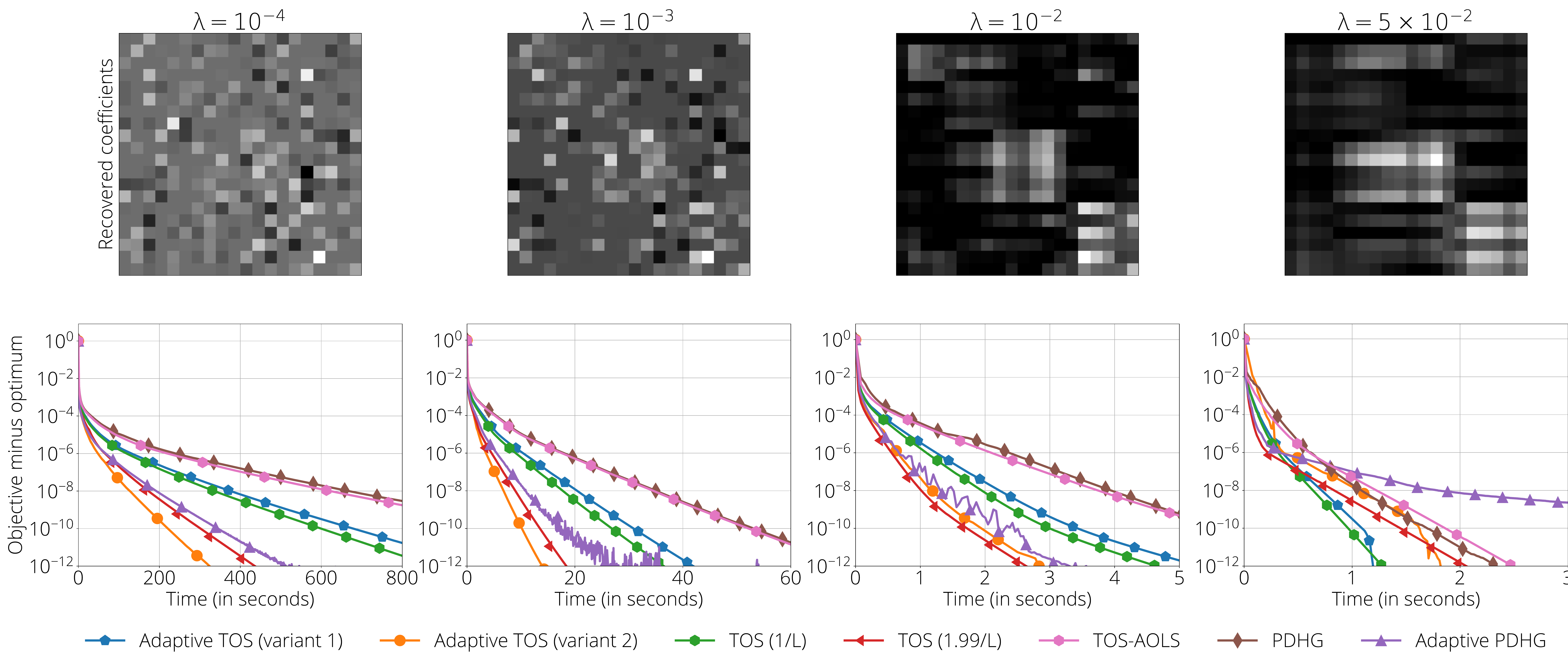}
\caption{{\bfseries Least Squares with trace norm + $\ell_1$ penalty}. Top row: recovered coefficients from problem \eqref{eq:trace_norm}. Bottom row: time vs suboptimality comparison. Columns represent different values of the $\lambda$ regularization parameter.}\label{fig:trace_norm}
\end{figure}

\clearpage

\subsection{Nearly isotonic penalty benchmarks}\label{apx:nearly_isotonic_bench}

Given an input dataset $\{(b_i, \boldsymbol{a}_i)\}_{i=1}^n$, which we generated according to  \ref{apx:synthetic}, we consider a logistic regression problem with nearly isotonic penalty on the weights:
\begin{align}\label{eq:logistic_nearly_isotonic}
  &\minimize_{\xx\in \RR^p}\frac{1}{n}\sum_{i=1}^n \log(1 + \exp(- b_i \boldsymbol{a}_i^T \xx)) + \lambda\sum_{i=1}^{p-1}\max\{\xx_i - \xx_{i+1}\}\\
  \equiv &\minimize_{\xx\in \RR^p}\underbrace{\frac{1}{n}\sum_{i=1}^n \log(1 + \exp(- b_i \boldsymbol{a}_i^T \xx))}_{=f(\xx)} + \underbrace{\lambda\sum_{i=1}^{\lfloor p/2\rfloor }\max\{\xx_{2i} - \xx_{2i+1}\}}_{=g(\xx)}
  + \underbrace{\lambda\sum_{i=1}^{\lfloor (p-1)/2\rfloor }\max\{\xx_{2i+1} - \xx_{2i+2}\}}_{=h(\xx)}~,
\end{align}
where $\lambda$ is a regularization parameter. The problem dimension in this case is $n=100$, $p=50$, and an noise of variance of 5 (instead of 1 as described in \ref{apx:synthetic}). We increased the noise variance to make the estimation of the ordering of coefficients more challenging (see figure below).

\paragraph{Benchmarks.} We show recovered coefficients as well as the time benchmarks in Figure~\ref{fig:faces}. The Adaptive TOS method is the best performing method in all four cases, and the difference is larger in the low regularization setting.

\paragraph{Lipschitz constant of proximal term.} Let $\|\cdot\|_{\text{ISO}}$ denote the nearly isotonic pseudo-norm. Then we can bound it in terms of the euclidean norm as follows, where as in \ref{apx:deblurring}, $\boldsymbol{K}$ is the matrix of first order differences:
\begin{equation}
    \|\xx\|_{\text{ISO}} \leq \|\boldsymbol{K}\xx\|_1 \leq \|\boldsymbol{K}\|_1 \|\xx\|_1 = 2\|\xx\|_1 \leq 2\sqrt{p}\|\xx\|
\end{equation}

\begin{figure*}[h]
\includegraphics[width=\linewidth]{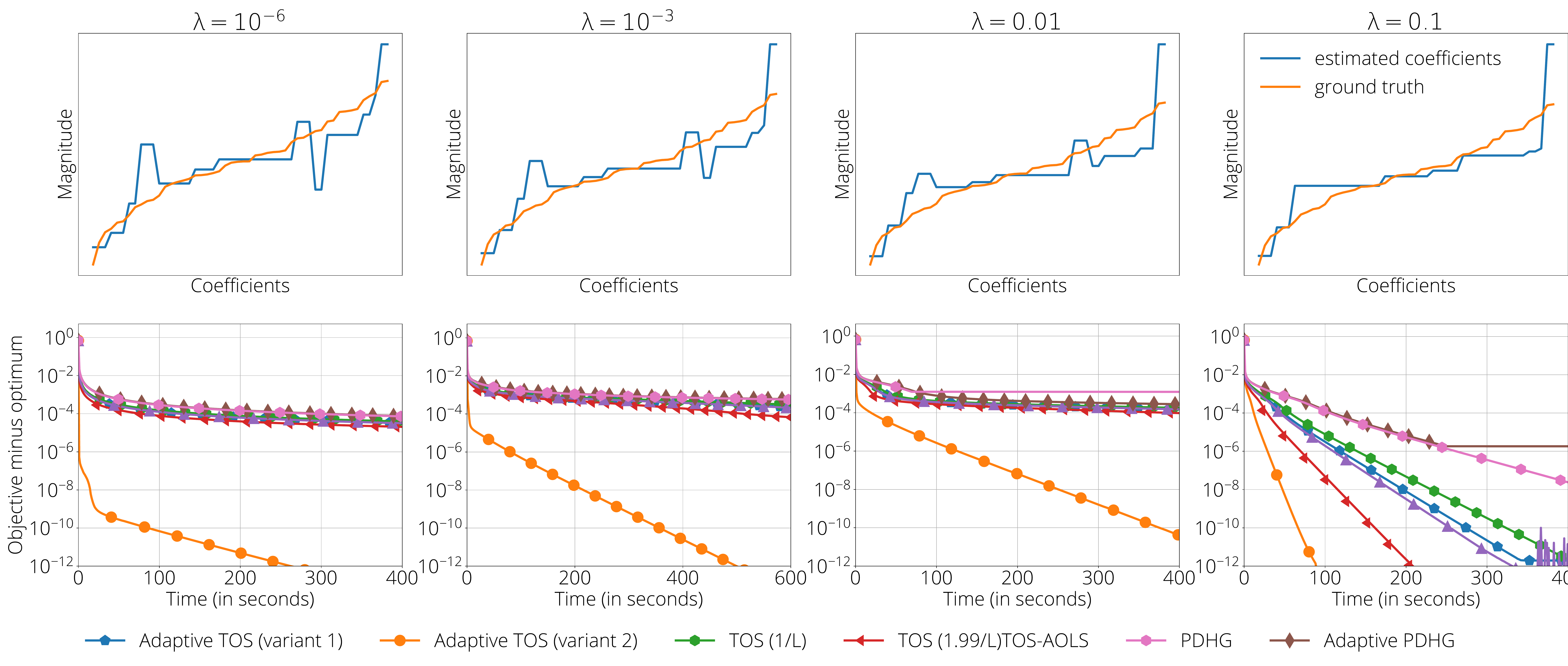}
\caption{{\bfseries Logistic regression with nearly isotonic penalty}. Top row: ground truth and recovered coefficients. Bottom row: time vs suboptimality comparison. Columns represent different values of the $\lambda$ regularization parameter}\label{fig:isotonic}
\end{figure*}

\end{document}